\newcommand{\F}{\mathbb{F}}
\newcommand{\G}{\mathbb{G}}
\newcommand{\PP}{\mathbb{P}}
\newcommand{\Q}{\mathbb{Q}}
\newcommand{\R}{\mathbb{R}}
\newcommand{\Z}{\mathbb{Z}}
\newcommand{\fq}{\mathfrak{q}}
\newcommand{\cN}{\mathcal{N}}
\newcommand{\ff}{\mathfrak{f}}
\newcommand{\fp}{\mathfrak{p}}
\newcommand{\OO}{\mathcal{O}}
\DeclareMathOperator{\PGL}{PGL}
\DeclareMathOperator{\Aut}{Aut}
\DeclareMathOperator{\Gal}{Gal}
\DeclareMathOperator{\Trace}{Trace}
\DeclareMathOperator{\Norm}{Norm}
\DeclareMathOperator{\ord}{ord}
\DeclareMathOperator{\un}{un}
\renewcommand{\setminus}{-}
\newcommand{\Gisom}{\overline{\rho}_{E,p}\sim\overline{\rho}_{\mathfrak{f},\varpi}}
\newcommand{\modpg}{\overline{\rho}_{E,p}}
\newcommand{\GL}{\operatorname{GL}}
\newcommand{\SL}{\operatorname{SL}}
\newcommand{\vv}{\mathbf{v}}
\newtheorem{thm}{Theorem}
\newtheorem{lem}[thm]{Lemma}
\newtheorem{conj}[thm]{Conjecture}
\newtheorem{cor}[thm]{Corollary}
\newtheorem{prop}[thm]{Proposition}
\theoremstyle{definition}
\newtheorem{example}[thm]{Example}
\newtheorem{exercise}[thm]{Exercise}
\newtheorem{definition}[equation]{Definition}
\theoremstyle{remark}
\definecolor{darkgreen}{rgb}{0,0.5,0}
\begin{document}

\title[]{
The modular approach to Diophantine 
equations over totally real fields
}
\author{Maleeha Khawaja}

\address{
	School of Mathematics and Statistics\\
	Hicks Building\\
	University of Sheffield\\
	Sheffield S3 7RH\\
	United Kingdom
	}
\email{mkhawaja2@sheffield.ac.uk}

\author{Samir Siksek}

\address{Mathematics Institute\\
    University of Warwick\\
    CV4 7AL \\
    United Kingdom}

\email{s.siksek@warwick.ac.uk}
\date{\today}
\thanks{
Khawaja is supported by an EPSRC studentship from the University of Sheffield (EP/T517835/1).
Siksek is supported by the
EPSRC grant \emph{Moduli of Elliptic curves and Classical Diophantine Problems}
(EP/S031537/1). }
\keywords{modularity, level lowering, Galois representation, Frey curve, Fermat, Diophantine equation, Hilbert modular forms}
\makeatletter
\@namedef{subjclassname@2020}{%
  \textup{2020} Mathematics Subject Classification}
\makeatother

\subjclass[2020]{11D41, 11F80, 11F41}
\maketitle

\begin{abstract}
Wiles' proof of Fermat's last theorem initiated a powerful new approach towards the resolution of certain Diophantine equations over $\Q$.  Numerous novel obstacles arise when extending this approach to the resolution of Diophantine equations over totally real number fields.  We give an extensive overview of these obstacles as well as providing a survey of existing methods and results in this area. 
\end{abstract}

\tableofcontents

{
  \hypersetup{linkcolor=black}
  \tableofcontents
}

\part{Introduction}
Every mathematician knows the statement of Fermat's Last Theorem,
proved by Wiles \cite{Wiles}.
\begin{thm}[Wiles]\label{thm:Fermat}
Let $n\geq 3$ be an integer. 
If $(a,b,c)\in \Q^3$ is a solution 
to the Fermat equation 
\begin{equation}
	\label{eq:Fermat1}
x^n+y^n=z^n
\end{equation}
then $abc=0$.
\end{thm}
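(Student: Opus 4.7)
The plan is to follow the classical modular approach pioneered by Frey, Serre, Ribet, and Wiles. First I would reduce the problem to showing that the equation $a^p+b^p+c^p=0$ has no solution in nonzero pairwise coprime integers $a,b,c$ for every prime exponent $p\ge 5$; the cases $n=3$ and $n=4$ are classical (Euler and Fermat respectively), and the general case follows from the prime case since $n$ must be divisible by $4$ or by an odd prime. By permuting $(a,b,c)$ and changing signs I would arrange that $b$ is even and $a\equiv -1 \pmod 4$.

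To such a hypothetical solution I would attach the \emph{Frey--Hellegouarch curve}
\[
E\;:\; Y^2 = X(X-a^p)(X+b^p),
\]
an elliptic curve over $\Q$. A standard computation with Tate's algorithm at the primes dividing $2abc$ gives the minimal discriminant $\Delta_E=2^{-8}(abc)^{2p}$ and the conductor $N_E=\rad_{2}(abc)$, where $\rad_2$ denotes the product of the odd primes dividing $abc$ times $2$. The key structural feature is that the exponent $p$ has been absorbed entirely into the discriminant, so every odd prime $\ell\mid abc$ contributes to $N_E$ but only to multiplicity $p$ in $\Delta_E$; this is what will later allow level lowering to strip those primes from the level.

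Next I would invoke three deep theorems. First, the modularity theorem of Wiles, Taylor--Wiles, and Breuil--Conrad--Diamond--Taylor ensures that $E$ is modular, so its mod $p$ Galois representation $\overline{\rho}_{E,p}:\Gal(\overline\Q/\Q)\to \GL_2(\F_p)$ arises from a cuspidal newform of weight $2$ and level $N_E$. Second, a theorem of Mazur on rational isogenies of prime degree guarantees that $\overline{\rho}_{E,p}$ is irreducible for $p\ge 5$. Third, Ribet's level lowering theorem applies: at every odd prime $\ell\mid\mid N_E$, the semistability of $E$ at $\ell$ together with $p\mid v_\ell(\Delta_E)$ lets one remove $\ell$ from the level. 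Iterating, $\overline{\rho}_{E,p}$ arises from a cuspidal newform of weight $2$ and level $2$.

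The contradiction is then immediate from the dimension formula $\dim S_2(\Gamma_0(2))=0$: there are no cuspidal newforms at level $2$ for weight $2$ at all, so no such representation exists and the hypothetical solution cannot exist. The main obstacle, historically and conceptually, is of course the modularity input; the level lowering step also relies on the non-trivial irreducibility provided by Mazur's theorem. In this survey the point of recording the proof this way is to isolate the four ingredients---a Frey curve with the right conductor/discriminant behavior, modularity, irreducibility of the residual representation, and level lowering---whose analogues over totally real fields the rest of the paper will examine.
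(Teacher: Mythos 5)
Your proposal is correct and follows essentially the same route as the paper's own sketch of Wiles' argument: reduce to prime exponent $p\ge 5$, attach the Frey--Hellegouarch curve with normalization $2\mid b$, $a\equiv -1\pmod 4$, invoke Mazur for irreducibility of $\overline{\rho}_{E,p}$, modularity of $E$, Ribet's level lowering down to level $2$, and conclude from the vanishing of $S_2(\Gamma_0(2))$. The only (harmless) slip is that an odd prime $\ell\mid abc$ appears in the minimal discriminant with multiplicity $2p\,v_\ell(abc)$, a multiple of $p$, rather than exactly $p$; this does not affect the level-lowering step.
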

Wiles' proof of Fermat's Last Theorem
rests on three major pillars:
\begin{enumerate}[(i)]
\item Mazur's isogeny theorem \cite{Mazur78};
\item Ribet's level-lowering theorem \cite{Ribet};
\item the modularity of semi-stable elliptic curves
over $\Q$, established by Wiles \cite{Wiles}
and Taylor--Wiles \cite{TaylorWiles}.
\end{enumerate}
The set of techniques used in this proof were later 
coined as the \lq\lq modular approach to Diophantine equations\rq\rq. 

\medskip

Let $E$ be an elliptic curve over $\Q$.
This can be given by a smooth integral Weierstrass model,
\[
E \; : \; Y^2+a_1 XY+a_3 Y \, = \, X^3+a_2 X^2 + a_4 X+a_6,
\qquad a_1,\dotsc,a_6 \in \Z.
\]
Write $\Delta$ for the minimal discriminant of $E$,
and $N$ for its conductor.
The modularity theorem \cite{Taylor01}
extends (iii) to all elliptic curves over
the rationals.
It asserts,
rather simply, that all elliptic curves over the rationals
are modular. Here is a more precise statement.
\begin{thm}[Wiles, Breuil, Conrad, Diamond and Taylor]\label{thm:modularity}
Let $E$ be an elliptic curve over $\Q$ of conductor $N$.
Then there is a normalized 
classical newform $f$ of weight $2$
level $N$ and rational Hecke eigenvalues 
such that $L(E,s)=L(f,s)$.
\end{thm}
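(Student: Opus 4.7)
The statement is the full modularity theorem for all elliptic curves over $\Q$, the culmination of work of Wiles, Taylor--Wiles, Diamond, Conrad--Diamond--Taylor and Breuil--Conrad--Diamond--Taylor. The plan is not to prove it from scratch, but to sketch the overall architecture. The strategy in every case is the same: one proves that for some small prime $p$ the mod $p$ Galois representation $\modpg \colon G_\Q \to \GL_2(\F_p)$ attached to $E$ is modular (i.e.\ isomorphic to the mod $p$ representation of some weight $2$ newform), and then one invokes a modularity lifting theorem, of the Wiles--Taylor--Wiles type, asserting that under suitable local and global hypotheses, if $\modpg$ is modular then the $p$-adic representation $\rho_{E,p}$ is modular, which is equivalent to the modularity of $E$.

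The first step is to try $p=3$. The key observation, due to Wiles, is that $\GL_2(\F_3)$ is solvable, so when $\overline{\rho}_{E,3}$ is irreducible the Langlands--Tunnell theorem produces a weight one automorphic form realising $\overline{\rho}_{E,3}$; a congruence argument then upgrades this to a weight two newform, yielding residual modularity. Combined with a $p=3$ modularity lifting theorem under ordinary or flat hypotheses at $3$, this settles the semistable case (Wiles, Taylor--Wiles) and, with the refinements of Diamond, the case where $E$ is semistable at $3$ and $5$.

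When $\overline{\rho}_{E,3}$ fails to be irreducible (or otherwise fails the hypotheses of the lifting theorem), one performs the famous \emph{$3$--$5$ switch}. Using that the modular curve $X(5)$ has genus zero with rational points, one constructs an auxiliary elliptic curve $E^\prime/\Q$ such that $\overline{\rho}_{E^\prime,5}\cong \overline{\rho}_{E,5}$ while $\overline{\rho}_{E^\prime,3}$ is irreducible with big image. Modularity of $E^\prime$ via the $p=3$ argument implies modularity of $\overline{\rho}_{E,5}$, and a $p=5$ lifting theorem then gives modularity of $E$. Proving lifting theorems in sufficient generality (non-ordinary, non-semistable, potentially crystalline) is the main analytic and commutative-algebraic obstacle; this was the contribution of Conrad--Diamond--Taylor, relying on deep deformation-ring/Hecke-algebra isomorphism results.

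The hard residual cases are those where both $\overline{\rho}_{E,3}$ and $\overline{\rho}_{E,5}$ are reducible. Such elliptic curves correspond to rational points on modular curves parametrising simultaneous $3$- and $5$-isogeny structure, most notably $X_0(15)$ and its twists. Breuil--Conrad--Diamond--Taylor reduce the problem to a finite explicit list and dispatch each surviving case either by an ad hoc choice of prime $p\in\{3,5,7\}$ for which a refined lifting theorem applies, or by direct verification using Mazur's classification of rational isogenies. The main obstacle throughout is not the residual modularity inputs (Langlands--Tunnell, $X(5)$ arithmetic) but the repeated strengthening of the $R=\mathbb{T}$ theorems needed to handle ever more singular deformation conditions at the auxiliary prime.
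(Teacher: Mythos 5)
The paper does not prove Theorem~\ref{thm:modularity}; it is cited as a black box (to Breuil--Conrad--Diamond--Taylor, with the earlier contributions of Wiles and Taylor--Wiles for the semistable case) and then used downstream. So there is no paper proof to compare against. As a summary of the known proof your sketch gets the overall architecture right: residual modularity at $p=3$ via Langlands--Tunnell and solvability of $\GL_2(\F_3)$, a modularity lifting ($R=\mathbb{T}$) input to propagate this to $\rho_{E,3}$, and the $3$--$5$ switch via the genus-zero twisted modular curve parametrising the mod $5$ representation to handle the cases where the mod $3$ hypotheses fail.

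Two details are off, however. First, the BCDT argument does not invoke $p=7$; the whole induction runs on the primes $3$ and $5$ only, and the contribution of Breuil--Conrad--Diamond--Taylor over Conrad--Diamond--Taylor was precisely to prove the $p=3$ lifting theorem in the presence of arbitrarily wild ramification at $3$ (conductor exponent up to $8$ at $3$), using Breuil's classification of finite flat group schemes over highly ramified bases. Second, your characterisation of the ``hard residual cases'' as those where both $\overline{\rho}_{E,3}$ and $\overline{\rho}_{E,5}$ are reducible somewhat misplaces the difficulty: that simultaneous-reducibility case is a small finite list of $j$-invariants coming from $X_0(15)(\Q)$ and is dispatched by direct inspection, whereas the genuinely hard cases in BCDT are those with deeply ramified behaviour at $3$ where the old deformation-theoretic local conditions do not apply. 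Finally, the theorem as stated asserts that the newform has weight $2$, level exactly $N = \Cond(E)$, and rational Hecke eigenvalues; your sketch stops at ``$E$ is modular'' and does not explain why the associated form has these features, which requires invoking local-global compatibility (Carayol, Langlands) to match the conductor with the level, and the integrality of $a_p(E) = p + 1 - \#E(\F_p)$ to see that $\Q_f = \Q$.
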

Here $L(E,s)$ denotes the Hasse--Weil $L$-function
of $E$, and $L(f,s)$ denotes the Hecke $L$-function
of $f$. Another way of expressing the relationship
between $E$ and $f$ is via the $p$-adic
representations of both. Let $p$ be a prime.
Write $G_\Q=\Gal(\overline{\Q}/\Q)$.
Then, one can construct continuous representations
\[
\rho_{E,p} \; : \; G_\Q \rightarrow \GL_2(\Z_p),
\qquad
\rho_{f,p} \; : \; G_\Q \rightarrow \GL_2(\Z_p).
\]
Then the relationship $L(E,s)=L(f,s)$
may be re-expressed as 
\begin{equation}\label{eqn:relpadic}
\rho_{E,p} \sim \rho_{f,p}.
\end{equation}

Here is a more elementary way of expressing the 
relationship between $E$ and $f$. We may write
the $q$-expansion of $f$ as
\begin{equation}\label{eqn:qexp}
	f=q+\sum_{n=2}^{\infty}c_{n}q^{n}. 
\end{equation}
The relationship $L(E,s)=L(f,s)$ can be rewritten
as
$a_{p}(E)=c_p$
for all primes $p \nmid N$, where
\[
a_p(E) \; = \; p+1-\# E(\F_p).
\]

\medskip

We give a simplified sketch of 
Wiles' proof. Let $p\geq 5$ be a prime. 
Suppose $(a,b,c)\in\Q^3$ 
is a solution to \eqref{eq:Fermat1} with $abc\neq0$.
After suitably scaling and permuting $(a,b,c)$
we can suppose that
\begin{equation}\label{eqn:cond}
(a,b,c) \in \Z^3, \quad \gcd(a,b,c)=1, \quad 2 \mid b,
\qquad a^p \equiv -1 \pmod{4}.
\end{equation}
The main steps are as follows.
\begin{itemize}
\item \textbf{Frey curve.} 
Hellegouarch \cite{Hellegouarch} and Frey 
associate an elliptic curve, commonly known
as a \textbf{Frey curve},
\[
E: Y^2=X(X-a^p)(X+b^p)
\]
 to such a solution. The conditions in \eqref{eqn:cond}
ensure that $E$ is semistable.
The motivation for considering the Frey curve $E$
is that the mod $p$ representation 
$\overline{\rho}_{E,p}$ attached to the $p$-torsion of $E$
is unramified away from $p$ and $2$.
\item \textbf{Irreducibility.} 
It follows from Mazur's isogeny theorem \cite{Mazur78} that 
$\overline{\rho}_{E,p}$ is irreducible.
\item \textbf{Modularity.}
The elliptic curve $E$ is modular by Wiles.
It follows from this that $\overline{\rho}_{E,p}$
is modular, in the sense that
\[
\overline{\rho}_{E,p} \sim \overline{\rho}_{f,p}
\]
where $f$ is the newform attached to $E$
by the modularity theorem. This follows
by reducing \eqref{eqn:relpadic} modulo $p$.
\item \textbf{Level lowering.} 
Ribet's level-lowering theorem \cite{Ribet} 
implies that $\overline{\rho}_{E,p}$ is 
associated to a newform of weight $2$ and 
level $2$.
\item \textbf{Elimination.} There are no
newforms of weight $2$ and level $2$ contradicting 
the existence of $(a,b,c)$. 
\end{itemize}

There are two natural questions that arise here. 
Can the modular approach be extended to resolve 
other Diophantine equations over $\Q$? 
Can the modular approach be extended to resolve 
Diophantine equations over number fields? 
Indeed the answer to both of these questions is yes (see e.g. 
\cite{Alfaraj23}, \cite{Bennett21}, \cite{BMS23}, 
\cite{BPS19},  \cite{BCDF19}, \cite{DarmonI}, \cite{DarmonII}, 
\cite{DGMP23}, 
\cite{FNS20},
\cite{FLTsmall}, 
\cite{Frazer04}, 
 \cite{Fermat23}, 
 \cite{Koutsianas20}, 
\cite{Kraus19},
 \cite{MPT23},  
 \cite{PhilippeFermat},
   \cite{Philippe22}, 
   \cite{PhilippeDec}, \cite{SillimanVogt}).
In this survey, we give an overview of completed 
work that answers these questions in the affirmative 
as well as the challenges that 
are present in doing so.

\medskip

We restrict our exposition to totally real number fields 
since significantly less is known for more general number fields. 
For results established in the direction of more 
general number fields, we refer the 
reader to the work of \c{S}eng\"{u}n and Siksek \cite{SS18}
and of \c{T}urca\c{s} \cite{Turcas18, Turcas20},
which attack the Fermat equation \eqref{eq:Fermat1}
over certain imaginary quadratic fields,
assuming certain standard conjectures,
most notably Serre's modularity conjecture
over number fields.
The following recent breakthrough of 
Caraiani and Newton \cite{CN23} gives some
hope that the theorems of \c{S}eng\"{u}n, Siksek
and \c{T}urca\c{s} might eventually
be made unconditional. However, there is as of 
yet no analogue to Ribet's level lowering theorem
in the general number field setting. 

\begin{thm}[Caraiani and Newton]
Let $F$ be an imaginary quadratic field such that 
the elliptic curve $X_{0}(15)$ has rank $0$ over $F$. 
Let $E$ be an elliptic curve over $F$. 
Then $E$ is modular.
\end{thm}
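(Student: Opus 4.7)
The plan is to deduce modularity of $E$ from the recent modularity lifting theorems over CM fields, combined with the classical $3$--$5$ switch; the rank-zero hypothesis on $X_0(15)$ will precisely dispose of the one case where the switch fails.

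The first step uses the modularity lifting theorems for two-dimensional $p$-adic representations of $G_F = \Gal(\overline{F}/F)$ developed by the ten-author collaboration Allen--Calegari--Caraiani--Gee--Helm--Le Hung--Newton--Scholze--Taylor--Thorne, suitably refined. These reduce modularity of $E/F$ to residual modularity of $\overline{\rho}_{E,p}$ for some small prime $p$, provided $\overline{\rho}_{E,p}$ is absolutely irreducible with adequate image and the local conditions at primes above $p$ are met. For $p=3$, the image of $\overline{\rho}_{E,3}$ sits inside the solvable group $\GL_2(\F_3)$, so residual modularity over $F$ follows from Langlands--Tunnell together with cyclic base change from $\Q$. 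Hence if $\overline{\rho}_{E,3}$ is absolutely irreducible with adequate image, modularity of $E$ is immediate.

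Next comes the $3$--$5$ switch. If $\overline{\rho}_{E,3}$ is reducible but $\overline{\rho}_{E,5}$ is not, I consider the twist of the modular curve $X(5)$ parametrising pairs $(E',\alpha)$ with $\alpha : E'[5] \xrightarrow{\sim} E[5]$ preserving the Weil pairing. This is a genus-zero curve over $F$ with an $F$-rational point from $E$ itself, hence isomorphic to $\PP^1_F$. One picks an $F$-rational point on it corresponding to an $E'/F$ with $\overline{\rho}_{E',3}$ absolutely irreducible, applies the first step to conclude that $E'$ is modular, and then lifts modularity at $p=5$ via the isomorphism $\overline{\rho}_{E,5} \cong \overline{\rho}_{E',5}$ to obtain modularity of $E$. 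All that remains is the case where both $\overline{\rho}_{E,3}$ and $\overline{\rho}_{E,5}$ are reducible: then $E$ admits a cyclic $15$-isogeny over $F$, hence determines an $F$-rational point of $X_0(15)$. By hypothesis $X_0(15)(F)$ is finite, so only finitely many $j$-invariants arise; each corresponding elliptic curve is CM or a base change from $\Q$, and modularity for these is already known.

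I expect the hardest step to be the $3$--$5$ switch, for two reasons. First, one must guarantee the existence of $F$-rational points on the twist of $X(5)$ avoiding the thin set of $F$-points where the mod-$3$ representation becomes reducible or has inadequate image --- a density assertion which is delicate over a small base field such as an imaginary quadratic one. Second, and more seriously, matching up the local hypotheses at primes above $3$ and $5$ (ordinarity, residual non-degeneracy, behaviour at primes of bad reduction of $E$) demanded by the ten-author lifting theorems requires substantial technical input; this is where most of the weight of the actual Caraiani--Newton argument lies, and where any refinements beyond the existing lifting technology are needed.
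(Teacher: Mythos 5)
This survey does not prove the Caraiani--Newton theorem at all: it is quoted as an external result from \cite{CN23} (indeed the surrounding text emphasises that no analogue of Ribet's theorem is available over such fields), so there is no in-paper argument to compare yours against. Judged on its own, your outline does track the publicly known strategy of the original proof: modularity lifting over CM fields building on the ten-author theorems together with Caraiani--Newton's local--global compatibility work, residual modularity at $3$ via Langlands--Tunnell, a $3$--$5$ switch, and the rank-zero hypothesis on $X_0(15)$ entering exactly to dispose of elliptic curves whose mod $3$ and mod $5$ representations are both reducible (equivalently, curves carrying a cyclic $15$-isogeny over $F$).

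As a proof, however, the proposal has genuine gaps beyond the one you flag. First, the case where $\overline{\rho}_{E,3}$ (or $\overline{\rho}_{E,5}$) is absolutely irreducible but has small image --- e.g.\ projectively dihedral, or $\overline{\rho}_{E,p}\vert_{G_{F(\zeta_p)}}$ reducible, including CM curves --- is not covered by either branch of your dichotomy, since ``absolutely irreducible'' is not the same as the adequacy/genericity hypotheses the lifting theorems require; these cases need separate arguments and are a substantial part of the actual work. Second, in the doubly-reducible case, rank zero gives finiteness of $X_0(15)(F)$, but you still must determine the points: torsion can grow from $X_0(15)(\Q)$ to $X_0(15)(F)$, and your assertion that the resulting curves are ``CM or base change from $\Q$'' is unjustified as stated (the non-cuspidal rational points of $X_0(15)$ correspond to the non-CM conductor-$50$ isogeny class, whose base changes and twists are modular by classical modularity plus cyclic base change --- that is the correct mechanism, but one must rule out or handle any new non-cuspidal points over $F$). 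Third, in the $3$--$5$ switch the existence of an $F$-point of the twisted $X(5)\cong\PP^1_F$ for which $E'$ satisfies \emph{all} hypotheses of the lifting theorem (irreducible mod $3$ image with the required local behaviour at primes above $3$ and $5$) is a nontrivial Hilbert-irreducibility-with-local-conditions argument, not merely avoidance of a thin set. Finally, the entire first step rests on lifting theorems over imaginary quadratic fields that are not off-the-shelf; deferring them as ``technical input'' is reasonable for a sketch, but it means the proposal is an outline of the known strategy rather than a proof.
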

 
Freitas and Siksek \cite{FS15} 
developed a variant of the modular approach 
whereby they associate a putative solution 
of a Fermat-type equation to a solution 
of a certain $S$-unit equation. 
We refer the reader to \cite{OS22} for a comprehensive 
survey of this approach, which has since been used in several works to obtain asymptotic results concerning Fermat-type equations (see e.g. \cite{SS18}, 
\cite{IKO20}, \cite{IKO23}, \cite{KO20}, \cite{Mocanu22}, \cite{Mocanu23}, \cite{Villagra23}).

\part{Level lowering and the Frey curve $E$}

\section{Level lowering}
We do not state Ribet's level-lowering theorem \cite{Ribet}
in complete generality. Instead we state a consequence
obtained as result of specialising the theorem
to mod $p$ Galois representations of elliptic curves.
\begin{thm}[Ribet]	
	\label{thm:Ribet}
Let $p\geq 5$ be a prime. 
Let $E$ be an elliptic curve over $\Q$ with conductor $N$ 
and minimal discriminant $\Delta$. 
Suppose $\modpg$ is irreducible. 
Let
\[
M_{p}=\prod_{\substack{q\; \mid\mid \; N \\
p \; \mid \; \ord_{q}(\Delta)}} q, \qquad
N_{p}=\frac{N}{M_{p}}. 
\]
Then $\modpg\sim \overline{\rho}_{f, \varpi}$ where $f$ 
is a newform of weight $2$ and level $N_p$. 
\end{thm}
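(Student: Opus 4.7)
The plan is to combine the modularity of $E$ with Ribet's general level-lowering theorem for modular forms, connecting them through a local analysis of $\overline{\rho}_{E,p}$ at primes of multiplicative reduction. First, by the modularity theorem (Theorem~\ref{thm:modularity}), there is a normalized weight $2$ newform $g$ of level $N$ with rational Hecke eigenvalues such that $\rho_{E,p}\sim \rho_{g,p}$, and reducing this isomorphism modulo $p$ yields $\overline{\rho}_{E,p}\sim \overline{\rho}_{g,\varpi}$. Thus the residual representation attached to $E$ is already known to arise from a weight $2$ newform of level $N$; the remaining task is to strip precisely those primes from $N$ at which $\overline{\rho}_{E,p}$ is in fact unramified.

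Next, for each prime $q\mid\mid N$ with $q\neq p$, Tate's $p$-adic uniformization of $E/\Q_q$ furnishes a short exact sequence of $G_{\Q_q}$-modules
\[
0 \longrightarrow \mu_p \longrightarrow E[p] \longrightarrow \Z/p\Z \longrightarrow 0
\]
whose extension class is the image of the Tate period $q_E\in \Q_q^\times/(\Q_q^\times)^p$. Since $q\neq p$, inertia $I_q$ acts trivially on both $\mu_p$ and $\Z/p\Z$, and a standard computation shows that $I_q$ acts trivially on $E[p]$ if and only if $q_E$ is a $p$-th power in $\Q_q^\times$, i.e.\ if and only if $p\mid \ord_q(q_E)=\ord_q(\Delta)$. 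Hence $\overline{\rho}_{E,p}$ is unramified at $q$ precisely when $q\mid M_p$, while it is genuinely ramified at the remaining primes dividing $N_p$.

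Finally, Ribet's theorem asserts that if $g_0$ is a weight $2$ newform of some level $N'$ whose residual representation $\overline{\rho}_{g_0,\varpi}$ is irreducible and unramified at a prime $q\mid\mid N'$ with $q\neq p$, then $\overline{\rho}_{g_0,\varpi}\sim \overline{\rho}_{g_1,\varpi'}$ for some weight $2$ newform $g_1$ of level $N'/q$. Applying this iteratively to peel off each prime $q\mid M_p$---the hypotheses being supplied by the local analysis above together with the assumed irreducibility of $\overline{\rho}_{E,p}$, which propagates through the chain of congruences---we descend through a sequence of newforms, arriving finally at an $f$ of weight $2$ and level $N_p=N/M_p$ with $\overline{\rho}_{E,p}\sim \overline{\rho}_{f,\varpi}$.

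The principal obstacle is Ribet's theorem itself, which is by far the deepest input: its proof engages Mazur's analysis of the Eisenstein ideal, the arithmetic geometry of modular curves and their Jacobians over $\Z$, and the construction of congruences between modular forms via the component groups of Néron models at primes of bad reduction. In a survey one rightly cites it as a black box. The only genuinely accessible ingredient is the Tate-curve computation, which is entirely standard and is what accounts for the specific divisibility condition $p\mid \ord_q(\Delta)$ defining $M_p$.
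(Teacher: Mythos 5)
The paper itself gives no proof of this statement: it is quoted as a specialisation of Ribet's level-lowering theorem, with the remark that the modularity hypothesis of Ribet's original formulation is now supplied by Theorem~\ref{thm:modularity}. Your outline --- modularity of $E$ gives residual modularity at level $N$; the Tate parametrisation shows $\overline{\rho}_{E,p}$ is unramified at exactly those $q \mid\mid N$ with $p \mid \ord_q(\Delta)$; Ribet's theorem is then applied iteratively to strip those primes --- is precisely the standard deduction that the citation stands in for, and your local computation is the same one underlying Lemma~\ref{lem:transvection}. Two small inaccuracies there: triviality of $I_q$ on $E[p]$ is equivalent to the Tate parameter $q_E$ being a $p$-th power in the maximal \emph{unramified} extension of $\Q_q$, i.e.\ to $p \mid \ord_q(q_E)$, not to $q_E \in (\Q_q^\times)^p$; and for non-split multiplicative reduction the exact sequence you write is twisted by the unramified quadratic character, which, however, does not affect the inertia analysis.

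There is one genuine gap: the product defining $M_p$ runs over all $q \mid\mid N$ with $p \mid \ord_q(\Delta)$, \emph{including} $q=p$, and this case is not decorative --- for the Frey curve in the second case of Fermat ($p \mid abc$) one has $p \mid\mid N$ and $p \mid \ord_p(\Delta)$, and the whole point is that $N_p$ is then prime to $p$ (level $2$ in Wiles' argument). Both your local analysis and the version of Ribet's theorem you invoke are restricted to $q \neq p$, so your iteration never removes $p$ from the level. To handle $q=p$ you need the companion statements: when $E$ has multiplicative reduction at $p$, the Tate curve shows that $\overline{\rho}_{E,p}$ is \emph{finite} (flat) at $p$ if and only if $p \mid \ord_p(\Delta)$, and the level-lowering-at-$p$ result (due to Mazur and Ribet, part of the resolution of Serre's epsilon conjecture, where the relevant local condition is finiteness rather than unramifiedness) then yields a weight-$2$ newform of level prime to $p$. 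With that supplement your deduction is complete; the rest is correct and is the argument the paper intends.
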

Here $\varpi$ is a prime ideal above $p$ in the 
ring of integers $\OO_{\Q_f}$
of the Hecke eigenfield $\Q_f$ of $f$.
If $f$ is given by the $q$-expansion in \eqref{eqn:qexp},
then $\Q_f=\Q(c_2,c_3,c_4,\dotsc)$, 
the number field obtained
by adjoining to $\Q$ the Hecke eigenvalues $c_2,c_3,\dotsc$
of $f$. 
We point out that modularity of $\overline{\rho}_{E,p}$ 
is a hypothesis of the original 
version of Ribet's theorem. We do not
however need this hypothesis anymore as it is 
now ensured
by the modularity theorem (Theorem~\ref{thm:modularity}).

As observed by Freitas and Siksek \cite[Theorem 7]{FS15},
the analogous result 
over totally real fields 
is a consequence of the work of Fujiwara \cite{Fujiwara}, 
Jarvis \cite{FrazerMod} 
and Rajaei \cite{Rajaei}.
\begin{thm}[Jarvis, Fujiwara and Rajaei]\label{thm:levellowering}
Let $p\geq 5$ be a prime.
Let $K$ be a totally real field such that $K\nsubseteq\mathbb{Q}(\zeta_{p})^{+}$. 
Let $E$ be an elliptic curve over $K$ with conductor
$\mathcal{N}$. Suppose $E$ is modular and $\overline{\rho}_{E,p}$ is
irreducible.  
For a prime ideal $\fq$ of $\OO_K$,
let $\Delta_{\mathfrak{q}}$ denote
the discriminant
of local minimal model of $E$ at $\mathfrak{q}$.
Suppose the following conditions are satisfied for all prime
ideals 
$\fp$ that lie over $p$: 
\begin{enumerate}[(i)]
\item 
$E$ is semistable at $\fp$;
\item $p \mid v_{\fp}(\Delta_\fp)$;
\item the ramification index satisfies $e(\fp/p)<p-1$.
\end{enumerate}
Let
\begin{equation}\label{eqn:level}
    \mathcal{M}_{p}= 
    \prod_{
    \substack{\mathfrak{q}\; \mid\mid \; \mathcal{N},\\ p \; \mid \; v_{\mathfrak{q}}(\Delta_{\mathfrak{q}})
    }
    }\mathfrak{q},\qquad 
    \mathcal{N}_{p} = \dfrac{\mathcal{N}}{M_{p}}.
\end{equation}
Then, $\Gisom$, where $\mathfrak{f}$ is a Hilbert eigenform of
level $\mathcal{N}_{p}$ and parallel weight $2$ and $\varpi$ is a
prime ideal of $\mathbb{Q}_{\mathfrak{f}}$ that lies above $p$.
\end{thm}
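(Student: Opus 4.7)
The plan is to derive this from modularity together with the Hilbert-modular level-lowering theorems of Fujiwara, Jarvis and Rajaei, applied so as to strip one prime of $\mathcal{M}_p$ from the level at a time. Since $E$ is modular by hypothesis, there exists a Hilbert eigenform $\mathfrak{f}_0$ of parallel weight $2$ and level $\mathcal{N}$, and a prime $\varpi_0$ of $\OO_{\mathbb{Q}_{\mathfrak{f}_0}}$ above $p$, such that $\overline{\rho}_{E,p}\sim \overline{\rho}_{\mathfrak{f}_0,\varpi_0}$. The task is then to remove from $\mathcal{N}$ precisely the primes dividing $\mathcal{M}_p$ by an iterated application of level lowering to the irreducible representation $\overline{\rho}_{E,p}$.

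First I would dispose of the primes $\mathfrak{q}\in\mathcal{M}_p$ coprime to $p$. Such a $\mathfrak{q}$ satisfies $\mathfrak{q}\mid\mid\mathcal{N}$, so $E$ has multiplicative reduction at $\mathfrak{q}$. A standard Tate-curve analysis then shows that $\overline{\rho}_{E,p}|_{G_\mathfrak{q}}$ is an extension of an unramified character by an unramified twist of the mod $p$ cyclotomic character, and that this extension is itself unramified at $\mathfrak{q}$ precisely when $p\mid v_\mathfrak{q}(\Delta_\mathfrak{q})$. The Fujiwara--Rajaei version of Ribet's theorem, together with the irreducibility of $\overline{\rho}_{E,p}$, then produces a Hilbert eigenform of parallel weight $2$ and level $\mathcal{N}/\mathfrak{q}$ whose mod $\varpi$ representation is congruent to $\overline{\rho}_{E,p}$.

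For a prime $\mathfrak{p}\mid p$ lying in $\mathcal{M}_p$, hypotheses (i) and (ii) imply that $E$ has multiplicative reduction at $\mathfrak{p}$ with $p\mid v_\mathfrak{p}(\Delta_\mathfrak{p})$, so the same Tate-curve analysis shows that $\overline{\rho}_{E,p}|_{G_\mathfrak{p}}$ arises from a finite flat group scheme over $\OO_{K_\mathfrak{p}}$. Hypothesis (iii), $e(\mathfrak{p}/p)<p-1$, is the Fontaine--Laffaille ramification bound, and is exactly what is required to invoke Jarvis' level-lowering theorem at primes above $p$; this removes $\mathfrak{p}$ from the level. The hypothesis $K\not\subseteq\mathbb{Q}(\zeta_p)^+$ is a technical condition imposed by the underlying Hilbert-modular theorems, which guarantees in particular that certain Hecke modules enjoy the expected multiplicity-one behaviour, so that the residual congruence can be upgraded to a statement about eigenforms of the desired level.

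The main obstacle is not any individual application of level lowering but the bookkeeping around its iteration: at each stage one must check that irreducibility persists (which is immediate, since $\overline{\rho}_{E,p}$ itself is not altered), that the intermediate eigenform remains of parallel weight $2$, and that the local hypotheses at each removed prime translate correctly between the elliptic-curve data (reduction type and $v_\mathfrak{q}(\Delta_\mathfrak{q})$) and the Galois-theoretic data (unramifiedness or finite flatness) demanded by the theorems being invoked.
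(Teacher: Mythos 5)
The paper itself offers no proof of this theorem: it states it as a synthesis of the level-lowering results of Fujiwara, Jarvis and Rajaei, crediting Freitas and Siksek \cite[Theorem~7]{FS15} with the observation that those works combine to give the stated conclusion. So there is no internal argument to compare your proposal against.

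Taken on its own terms, your sketch captures the right architecture: begin with modularity to obtain an eigenform at the full level $\mathcal{N}$, then peel off the primes of $\mathcal{M}_p$ one at a time, applying Mazur's principle \`a la Jarvis/Rajaei at primes $\mathfrak{q}\nmid p$ and the finite-flat (Fontaine--Laffaille) case at primes $\mathfrak{p}\mid p$. Two small points of precision are worth flagging. First, at a prime $\mathfrak{p}\mid p$ where $E$ has multiplicative reduction, $\overline{\rho}_{E,p}|_{G_{\mathfrak{p}}}$ is certainly \emph{not} unramified (the cyclotomic character on the diagonal is ramified at $\mathfrak{p}\mid p$); what the hypothesis $p\mid v_{\mathfrak{p}}(\Delta_{\mathfrak{p}})$ buys you is that the extension is \emph{peu ramifi\'e}, i.e.\ comes from a finite flat group scheme over $\OO_{K_{\mathfrak{p}}}$, and it is this finite-flat dichotomy (not unramifiedness) that governs whether level lowering at $\mathfrak{p}$ is permitted. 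Your conclusion is correct, but your phrase ``the same Tate-curve analysis'' elides the fact that the mechanism at $\mathfrak{p}\mid p$ (peu/tr\`es ramifi\'e) is genuinely different from the mechanism at $\mathfrak{q}\nmid p$ (unramified vs.\ ramified). Second, the hypothesis $K\nsubseteq\Q(\zeta_p)^{+}$ is not directly a multiplicity-one hypothesis: it is what Freitas and Siksek use to deduce, from the irreducibility of $\overline{\rho}_{E,p}$ over $G_K$, that $\overline{\rho}_{E,p}$ remains absolutely irreducible over $G_{K(\zeta_p)}$, ruling out the bad dihedral case; absolute irreducibility over $G_{K(\zeta_p)}$ is then the hypothesis fed into the Fujiwara--Jarvis--Rajaei machinery. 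You should also say a word about newform (rather than merely eigenform) production at the end of the iteration, and about the bookkeeping of the Hecke eigenfield $\Q_{\mathfrak{f}_i}$ and the prime $\varpi_i$ changing at each step; as you note, this is mostly bookkeeping, but it is precisely the bookkeeping that Freitas and Siksek carry out in \cite{FS15}.
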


We give a brief outline 
of how to resolve a Diophantine equation 
over a totally real field with
the help of Theorem \ref{thm:levellowering}. 
\begin{itemize}
	\item \textbf{Select a Frey curve $E$.}
	Let $p\geq 5$ be a prime. 
	Let $E$ be an elliptic curve over $K\nsubseteq\mathbb{Q}(\zeta_{p})^{+}$ associated 
	to a putative solution to the Diophantine equation in question.	
	\item \textbf{Determine the reduction type of $E$.} 
	One needs to show that $E$ is semistable at 
	all primes of $K$ above $p$ as well as compute the conductor of $E$. 
	It is often the case that $E$ has semistable 
	reduction away from a small set of primes. 
	For example if $E$ is the Frey curve associated to a 
	putative solution to the Fermat equation over $K$ 
	then $E$ has semistable reduction away from $2$ and a 
	finite set of primes representing the class group of $K$ (see \cite[Lemma 3.3]{FS15}).
	\item \textbf{Prove $\modpg$ is irreducible.} 
	In Part \ref{part:irred} we summarise existing 
	techniques used to show that $\modpg$ is irreducible.
	\item \textbf{Prove $E$ is modular.} 
	In Part \ref{sec:modularity}, we give an overview of 
	the definition of $\modpg$, and summarise known results surrounding the modularity 
	of elliptic curves over totally real fields.
	\item \textbf{Eliminate the isomorphisms.} The final step in reaching a contradiction to the putative solution is to 
	eliminate the isomorphisms $\modpg\sim\overline{\rho}_{\mathfrak{f}, \varpi}$. 
	We give an overview of the techniques used in this step in Part \ref{sec:Elimination}.
\end{itemize}

\section{The Frey curve} The initial obstacle to 
the resolution of a 
Diophantine equation using the modular approach 
is the construction of an appropriate Frey curve. 
Let $E$ be an elliptic curve over a 
totally real field $K$ with discriminant $\Delta$ 
and conductor $\mathcal{N}$. 
Theorem \ref{thm:levellowering} 
reduces $\mathcal{N}$ by the factor $\mathcal{M}_{p}$, 
where $\mathcal{M}_{p}$ is the product of 
the primes $\mathfrak{q}$ of $K$ at which $E$ is semistable 
and $p\mid v_{\mathfrak{q}}(\Delta_{q})$. 
Thus it is critical that the $p^{th}$-power 
free part of $\Delta$ is independent of the 
putative solution to the Diophantine equation. 

\medskip
The overall picture is brighter for 
generalized Fermat equations $Ax^p+By^q=Cz^r$ of 
signature $(p,q,r)$. 
Namely, recipes for Frey elliptic 
curves have been developed 
for: 
\begin{itemize}
	\item the signature $(p,p,p)$ by Kraus \cite{Kraus97};
	\item the signature $(p,p,2)$ by Bennett and Skinner \cite{BennettSkinner};
	\item the signature $(p,p,3)$ by Bennett, Vatsal and Yazdani \cite{BVY04}.
\end{itemize}

\section{The $(p,p,p)$ Frey curve}
We start with the equation
\[
u+v+w=0.
\]
Here $u$, $v$, $w$ are non-zero integers in a number field $K$.
We consider the elliptic curve
\[
Y^2=X(X-u)(X+v).
\]
This model has discriminant
\[
\Delta=16u^2 v^2 (u+v)^2=16 u^2 v^2 w^2.
\]
The $c$-invariants of the model are
\[
c_4=16(u^2-vw)=16(v^2-wu)=16(w^2-uv)
\]
and
\[
c_6=-32(u-v)(v-w)(w-u).
\]
Note that if $\fq \nmid 2$ is a prime ideal of $\OO_K$
and $\fq \nmid uvw$ then $E$ has good reduction at $\fq$.
If $\fq \nmid 2$, and divides exactly one of $u$, $v$, $w$,
then $E$ is minimal and has multiplicative reduction at $\fq$,
since $\fq \mid \Delta$ but $\fq \nmid c_4$.

We now consider the equation
\[
A x^p+By^p+Cz^p=0
\]
of signature $(p,p,p)$. Here $A$, $B$, $C \in \OO_K$ are fixed and non-zero,
and $x$, $y$, $z \in \OO_K$ are unknown and non-zero. For simplicity,
suppose $\gcd(Ax^p,By^p,Cz^p)=1$. We apply the above
with $u=Ax^p$, $v=By^p$, $w=Cz^p$. We obtain the Frey curve
\[
E : Y^2=X(X-Ax^p)(X+By^p).
\]
The discriminant of the model $E$ is
\[
\Delta=16A^2B^2 C^2 (xyz)^{2p}.
\]
The conductor $\cN$ has the form
\[
\cN=\prod_{\fq \mid 2} \fq^{e_\fq} \cdot \prod_{\substack{\fq \mid ABCxyz \\ \fq \nmid 2}} \fq.
\]
Note that $E$ has multiplicative reduction at all $\fq \mid ABCxyz$, $\fq \nmid 2$,
so the exponent of such $\fq$ in the conductor is $1$.
The exponents $e_\fq$ for the primes above $2$ can be determined
using Tate's algorithm, but they are bounded by
$2+6 v_\fq(2)$ (\cite[Theorem IV.10.4]{SilvermanII}).
Note that $p \mid \ord_\fq(\Delta)$ for $\fq \mid xyz$, $\fq \nmid 2 ABC$.
Thus, the level $\cN$ in Theorem~\ref{thm:levellowering}
satisfies
\[
\cN=\prod_{\fq \mid 2} \fq^{e_\fq^\prime} \cdot \prod_{\substack{\fq \mid ABC \\ \fq \nmid 2}} \fq.
\]
Here $e_\fq^\prime \le e_\fq$. We note that $\cN$ essentially
depends only on the coefficients $A$, $B$, $C$. Only the exponents
of the primes above $2$ depend on the solution $(x,y,z)$.
But at any rate, there are only finitely many possibilities for $\cN$.

The simplifying assumption $\gcd(Ax^p,By^p,Cz^p)=1$ cannot always
be achieved over number fields, since the $\gcd$ is an ideal
that may not be principal. In the general case we need
to scale $x$, $y$, $z$ so that they remain integral,
but the $\gcd$ belongs to a finite set depending
on the class group. For an explanation of this,
see \cite[Lemma 3.2]{FS15} or \cite[Lemma 12]{NowrooziSiksek}.

\section{The $(p,p,2)$ and $(p,p,3)$ Frey curves}
\begin{exercise}
Consider the equation
\[
u+v=w^2
\]
and the model
\[
E \; : \; Y^2=X(X^2+2wX+u).
\]
Note that the discriminant of this model is
\[
\Delta=16 u^2((2w)^2-4u)=64 u^2v.
\]
Use this to workout the details of the Frey curve
for the $(p,p,2)$ equation
\[
Ax^p+B y^p=Cz^2.
\]
\end{exercise}
\begin{exercise}
Consider the equation
\[
u+v=w^3
\]
and the model
\[
E \; : \; Y^2+3wXY+uY=X^3.
\]
Note that the discriminant of this model is
\[
\Delta=27u^3(w^3-u)=27 u^3 v.
\]
Use this to workout the details of the Frey curve
for the $(p,p,3)$ equation
\[
Ax^p+B y^p=Cz^3.
\]
\end{exercise}

In favourable settings it maybe  possible to 
reduce the resolution of an arbitrary 
Diophantine equation to the 
resolution of several Fermat-type 
equations via descent. 

\begin{example}
Bennett, Patel and Siksek \cite{BPS16} 
resolve the question 
of when the sum of $3$ consecutive integral $5^{th}$ powers 
is equal to a perfect power. 
This can be reframed as the problem of
finding all integral solutions to the equation 
\begin{equation}
	\label{eq:fifthpowers}
	x(3x^4+20x^2+10)=z^p,
\end{equation}
with prime exponent $p$. This does not fit into the
three families above. However, let $\alpha=\gcd(x,10)$.
Then $\alpha=1$, $2$, $5$ or $10$. We note that
\[
x=\alpha^{p-1} z_1^p, \qquad 3x^4+20x^2+10=\alpha z_2^p,
\]
where $z_1$, $z_2$ are integers and $z=\alpha z_1 z_2$.
They made use of the identity,
\[
7x^4 + (3x^4+20x^2+10) \; = \; 10(x^2+1)^2,
\]
whence
\[
7 \alpha^{4p-5} z_1^{4p}+ z_2^p= \frac{10}{\alpha} (x^2+1)^2.
\]
Note that this equation fits into the $(p,p,2)$
family, and leads to a $(p,p,2)$ Frey curve.
Similar factorisation arguments are used
in \cite{BPS17} to reduce Diophantine
problems associated to sums of consecutive cubes
to ternary Diophantine equations of signature $(p,p,2)$.
\end{example}

\begin{example}
In \cite{BPS19}, the authors resolve the 
equation $F_n+2=y^p$ where $F_n$ denotes the $n$-th
Fibonacci number. The case where $n$ is odd
reduces in an elementary manner
 to the equation $F_n=y^p$ which had been resolved
by Bugeaud, Mignotte and Siksek \cite{FibLuc}.
Thus it remains to consider
\begin{equation}\label{eqn:Fnp2}
F_{2n}+2=y^p.
\end{equation}
There does not seem to be a way of constructing a Frey
curve associated to \eqref{eqn:Fnp2} over $\Q$.
However, we may write
\[
F_{2n}=\frac{\varepsilon^{2n}-\varepsilon^{-2n}}{\sqrt{5}},
\qquad \varepsilon=\frac{1+\sqrt{5}}{2}.
\]
Let $x=\varepsilon^{2n}+\sqrt{5}$. Then \eqref{eqn:Fnp2}
maybe rewritten as
\[
x^2-6 \; = \; \sqrt{5} \varepsilon^{2n} y^p
\]
which does fit in to the $(p,p,2)$ family.
\end{example}

There are numerous 
works that study integer solutions 
to various Diophantine equations 
where the associated Frey curve 
is defined over a totally real field; 
see e.g. \cite{Alfaraj23}, \cite{Anni}, 
\cite{BDSS15}, \cite{BMS23}, \cite{BPS19}, \cite{BCDF19}, \cite{DarmonI}, 
\cite{DarmonII}, \cite{DF13}, \cite{FreitasFrey}, \cite{Pacetti22}.
It appears that the origins of this 
method lie in \cite{BDSS15} wherein 
the authors construct a Frey curve 
defined over $\Q(\sqrt{2})$ in order 
to provide a complete resolution 
of the equation $x^{2n}\pm 6x^n+1=8y^2$ 
in positive integers.

\section{The multi-Frey approach}

The multi-Frey approach, introduced by Bugeaud, Mignotte
and Siksek \cite{BMS08}, increases the chances of 
success in the final step of the modular approach. 
This is when a single Diophantine
equation is attacked simultaneously 
using multiple Frey curves.  
As a very basic example, an equation of the $ax^p+by^p=c$
can trivially be written in the following three ways
\[
ax^p+by^p=c \cdot 1^p, \qquad
ax^p+by^p=c \cdot 1^2, \qquad
ax^p+by^p=c \cdot 1^3,
\]
allowing us to attach the $(p,p,p)$, $(p,p,2)$
and $(p,p,3)$ Frey curves to the single
equation $ax^p+by^p=c$.
More recently this approach was used by 
Billerey, Chen, Dieulefait and Freitas \cite[Theorem 1]{BCDF19} 
to prove the following result. 

\begin{thm}[Billerey, Chen, Dieulefait and Freitas]\label{thm:BCDF19}
Let $p$ be a prime. 
There are no integral solutions $(a,b,c)$ to the equation
\begin{equation}
	\label{eq:BCDF}
		x^5+y^5=3z^p
\end{equation}
such that $abc\neq 0$ and $\gcd(a,b,c)=1$.
\end{thm}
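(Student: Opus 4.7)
\emph{The plan.} Suppose $(a,b,c) \in \Z^3$ with $abc \neq 0$ and $\gcd(a,b,c) = 1$ satisfies \eqref{eq:BCDF}. Small exponents can be handled separately, so assume $p \geq 7$. The strategy is the multi-Frey approach: attach two Frey curves to the putative solution --- one defined over $\Q$ and one over the totally real field $K = \Q(\sqrt{5})$ --- apply modularity and level lowering (Theorems~\ref{thm:modularity},~\ref{thm:Ribet},~\ref{thm:levellowering}) to each, and combine the two resulting mod-$p$ congruences of Galois representations to rule out all surviving newform pairs.

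\emph{Setting up the descent.} Factor $a^5 + b^5 = (a+b) \phi(a,b)$ with $\phi(x,y) = x^4 - x^3 y + x^2 y^2 - x y^3 + y^4$. The identity $\phi(x,y) = (x+y)^4 - 5xy(x+y)^2 + 5 x^2 y^2$ together with $\gcd(a,b) = 1$ forces $\gcd(a+b, \phi(a,b)) \mid 5$, while $a^5 + b^5 \equiv a+b \pmod{3}$ gives $3 \mid a+b$. A case-split on whether $5 \mid a+b$ produces a $\Q$-descent of the form
\[
a + b = 3 \cdot 5^{\delta(p-1)} s^p, \qquad \phi(a,b) = 5^{\delta} t^p, \qquad c = 5^{\delta} s t,
\]
with $\delta \in \{0,1\}$ and $\gcd(s,t)$ supported at $2$. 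Over $K$, factor further as $\phi = \phi_+ \phi_-$ with $\phi_\pm(x,y) = x^2 + \tfrac{-1 \pm \sqrt{5}}{2} xy + y^2$, and obtain an analogous $K$-descent expressing each $\phi_\pm(a,b)$ as a unit times a $p$-th power in $\OO_K$, up to a controlled correction at the unique prime of $\OO_K$ above $5$.

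\emph{Two Frey curves.} From the $\Q$-descent, attach a Frey curve $E_1/\Q$ of signature $(p,p,2)$ via the Bennett--Skinner recipe applied to an identity presenting $\phi(a,b)$ and related quantities as a difference involving a square and a $p$-th power. From the $K$-descent, attach a Frey curve $E_2/K$ of signature $(p,p,p)$ via Kraus' recipe applied to the triple derived from $\phi_+(a,b)$ and $\phi_-(a,b)$. Tate's algorithm shows each $E_i$ is semistable at primes above $p$, and that its conductor is supported on a fixed finite set of primes dividing $2 \cdot 3 \cdot 5$ independently of the solution. Invoke modularity --- Theorem~\ref{thm:modularity} for $E_1$, and the modularity theorem for elliptic curves over real quadratic fields for $E_2$ --- and establish irreducibility of $\modpg$ via Mazur's isogeny theorem for $E_1$ and a Freitas--Siksek-type criterion for $E_2$ (valid since $K \not\subseteq \Q(\zeta_p)^+$ for $p \geq 7$). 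Applying Theorems~\ref{thm:Ribet} and \ref{thm:levellowering} then yields explicit finite lists of classical newforms $f$ at a predicted level $N_1$ and Hilbert newforms $\mathfrak{f}$ of parallel weight $2$ at a predicted level $\cN_2$, such that $\modpg \sim \overline{\rho}_{f,p}$ for $E_1$ and $\modpg \sim \overline{\rho}_{\mathfrak{f},\varpi}$ for $E_2$.

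\emph{The main obstacle: elimination.} Each Frey curve in isolation typically fails here: the predicted newform spaces contain forms whose Hecke eigenvalues match those of the Frey curve modulo $p$ at every accessible small prime, preventing an immediate contradiction. The multi-Frey machinery of Bugeaud--Mignotte--Siksek resolves this by requiring simultaneous compatibility. For each surviving pair $(f, \mathfrak{f})$, the task is to exhibit an auxiliary rational prime $q \nmid 30 p$ such that, as $(a,b,c)$ ranges over all solutions of $a^5 + b^5 \equiv 3 c^p \pmod{q}$ in $\F_q$, no triple produces trace data for $E_1$ and $E_2$ that is jointly compatible with $c_q(f)$ and with the Hecke eigenvalues of $\mathfrak{f}$ at primes of $\OO_K$ above $q$, modulo the respective primes $\varpi$ above $p$. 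Producing such witnesses uniformly across the list, together with a residual analysis for the finitely many primes $p$ escaping the generic trace bound, is the computationally heavy and most delicate core of the proof.
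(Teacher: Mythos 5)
The paper is a survey: it does not prove this theorem but only describes the structure of the proof in \cite{BCDF19}, namely a multi-Frey argument using the two Frey curves $E_{a,b}$ and $F_{a,b}$ of Dieulefait--Freitas \cite{DF14}, \emph{both} defined over $\Q(\sqrt{5})$, with $E_{a,b}$ amenable to the modular method precisely when $5\nmid (a+b)$ and $F_{a,b}$ precisely when $5\mid (a+b)$; the complete resolution comes from combining the two cases. Your route is different: one curve over $\Q$ of signature $(p,p,2)$ and one over $\Q(\sqrt{5})$ of signature $(p,p,p)$. Your algebraic preliminaries are essentially sound (the factorisation $a^5+b^5=(a+b)\phi(a,b)$, $\gcd(a+b,\phi(a,b))\mid 5$, $3\mid a+b$, the descent with $\delta\in\{0,1\}$, and $\phi=\phi_+\phi_-$ over $\OO_K$ — though after removing the $5$-part one gets $\gcd(s,t)=1$, not a gcd supported at $2$), and Frey curves of the shapes you indicate can indeed be written down, e.g.\ from the identity $4\phi(a,b)+(a+b)^4=5(a^2+b^2)^2$ for the $(p,p,2)$ curve.

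The genuine gap is the elimination step, which is the entire content of the theorem. Dieulefait and Freitas had already done the descent, modularity, irreducibility and level lowering for their Frey curves and still only resolved the equation for a density-$1/2$ set of exponents; the contribution of \cite{BCDF19} is precisely the refined argument that eliminates every surviving form for every $p$, and it hinges on the specific pair of Frey curves and on the case split at $5$ (each curve is usable in only one of the two cases). Your proposal replaces this by the assertion that, for each surviving pair $(f,\mathfrak{f})$, an auxiliary prime $q\nmid 30p$ witnessing joint incompatibility exists, plus an unspecified ``residual analysis'' for small $p$. Nothing is said about what the level-lowered levels $N_1$, $\cN_2$ actually are, which newforms live there, why no pair can be persistently compatible (the standard failure mode is a form corresponding to an obstructing global point or CM form, which no choice of auxiliary primes removes), or how a bound on $p$ is extracted; nor is there any reason to believe your particular pair of Frey curves avoids the obstruction that defeated the single-curve approach. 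The small exponents $p=2,3,5$, which are part of the statement, are also dismissed without argument. As it stands this is a plausible outline of the multi-Frey strategy, not a proof, and it does not reproduce the mechanism (the two $\Q(\sqrt{5})$-curves tied to the case $5\mid(a+b)$ versus $5\nmid(a+b)$) that the cited proof actually relies on.
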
 

Suppose $(a,b,c)$ is an integral solution 
to \eqref{eq:BCDF} with $abc\neq 0$ and $\gcd(a,b,c)=1$. 
Dieulefait and Freitas \cite{DF14} provided a resolution of \eqref{eq:BCDF} for a set 
of prime exponents with (Dirichlet) density $1/2$.
In doing so, the authors associate two Frey elliptic curves $E_{a,b}$ and $F_{a,b}$ to $(a,b,c)$ where both elliptic curves are defined over $\Q(\sqrt{5})$. 
These Frey curves are also used in 
the proof of Theorem \ref{thm:BCDF19}. 
In particular the elliptic curve $E_{a,b}$ is amenable to the modular 
approach when $5\nmid (a+b)$, and the elliptic curve 
$F_{a,b}$ is amenable to the modular approach when $5\mid (a+b)$ 
which allows for a complete resolution of \eqref{eq:BCDF}. 

We refer the reader to the following works 
for further applications of this approach:
\cite{BC12}, \cite{BCDY14}, 
\cite{BMS23}, \cite{BPS19}, \cite{DarmonII}, \cite{BLMS08}, \cite{FreitasFrey}, \cite{MPT23}. 

\section{Frey hyperelliptic curves}
Darmon \cite{Darmon00} has developed a 
program to attack Fermat-type equations through 
replacing the Frey elliptic curve with certain 
higher dimensional abelian varieties. 
Inspired by Darmon's program, Billerey, Chen, Dieulefait and 
Freitas \cite{DarmonI} study integral solutions 
$(a,b,c)$ to the generalised Fermat 
equation $x^r+y^r=Cz^p$ where $C\in\Z\setminus \{0\}$, $r\geq 5$ is a fixed 
prime and $p$ is a prime that varies. 
They prove the following result 
through the application of the modular approach 
to a Frey hyperelliptic 
curve $C_{r}(a,b)$ constructed by Kraus \cite{KrausNotes}.

\begin{thm}[Billerey, Chen, Dieulefait and Freitas]
Let $n\geq 2$ be an integer. 
There are no integral solutions $(a,b,c)$ to the equation 
\[
	x^{11}+y^{11}=z^n
\]
such that $abc\neq 0,\; \gcd(a,b,c)=1$ and 
$2\mid (a+b)$ or $11\mid (a+b)$.
\end{thm}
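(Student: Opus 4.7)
The plan is to follow Darmon's program for signature $(11,11,p)$, applied to Kraus's Frey hyperelliptic curve $C_{11}(a,b)$ whose Jacobian $J = J_{11}(a,b)$ is a $\GL_2$-type abelian variety over $\Q$ with real multiplication by $\OO_K$, where $K = \Q(\zeta_{11})^+$ is the (degree $5$) totally real subfield of $\Q(\zeta_{11})$. The first step is to reduce $n$ to a prime exponent: if $n$ has an odd prime factor $p$, then a solution for exponent $n$ yields one for exponent $p$ by writing $z^n = (z^{n/p})^p$; the remaining case $n = 2^k$ reduces to $n = 2$, which is signature $(11,11,2)$ and is handled by a Bennett--Skinner $(p,p,2)$ Frey elliptic curve rather than by $C_{11}$. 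Small prime exponents ($p = 2,3,5,7,11$) are dispatched by classical results or ad hoc descent, so I may assume $p \geq 13$ and that $(a,b,c)$ is a primitive integer solution with $abc \neq 0$ satisfying either $2 \mid (a+b)$ or $11 \mid (a+b)$.

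Next I would analyze the bad reduction of $J$. The primes of bad reduction lie in $\{2, 11\} \cup \{\fq : \fq \mid abc\}$, and at each $\fq \mid abc$ outside $\{2,11\}$ the reduction is multiplicative with $p \mid v_\fq(\Delta^{(i)})$ for each RM-isotypic component of $J$. The mod-$\varpi$ Galois representation $\overline{\rho}_{J,\varpi}$ (for $\varpi \mid p$ a prime of $\OO_K$) is then a continuous $2$-dimensional representation of $\Gal(\overline{\Q}/\Q)$ ramified only at $\{2, 11, p\}$ together with the primes of multiplicative reduction, and its inertia at such primes is tame with characters of the expected Tate-curve shape. Combining irreducibility of $\overline{\rho}_{J,\varpi}$ (a Mazur-type argument for $\GL_2$-type varieties, nontrivial in itself) with modularity of $J$ permits me to apply level-lowering, Theorem~\ref{thm:levellowering}, in its Hilbert modular form over $K$: the level collapses to an ideal $\mathcal{N}_0 \mid \mathfrak{m}^{a} \mathfrak{l}^{b}$, where $\mathfrak{m}$ lies over $2$ and $\mathfrak{l}$ over $11$. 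The role of the divisibility hypothesis in the statement is precisely to force the exponents at $\mathfrak{m}$ and $\mathfrak{l}$ in $\mathcal{N}_0$ to be small enough that the resulting space of Hilbert newforms of parallel weight $2$ over $K$ of level $\mathcal{N}_0$ is explicitly computable; the two cases $2 \mid (a+b)$ and $11 \mid (a+b)$ lead to distinct but parallel elimination problems.

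The hard step will be elimination. For each Hilbert newform $\mathfrak{f}$ in the (finite, explicitly enumerated via Magma's Hilbert modular forms package over a degree-$5$ field) space, I must contradict $\overline{\rho}_{J,\varpi} \sim \overline{\rho}_{\mathfrak{f},\varpi'}$. The standard strategy is to pick several auxiliary primes $\fq$ of $K$ of small norm at which $J$ has good reduction, list the finite set of possible traces of Frobenius on $J$ at $\fq$ as $(a,b) \bmod \fq$ varies, compute $a_\fq(\mathfrak{f})$ from the Hecke data of $\mathfrak{f}$, and verify that the required congruence modulo $\varpi'$ fails for all sufficiently large $p$. Hilbert newforms $\mathfrak{f}$ with rational coefficient field correspond (conjecturally) to elliptic curves over $K$, and must be ruled out separately, either via direct enumeration of elliptic curves over $K$ of the allowed conductor or via a local analysis of the inertia image on $J[\varpi]$ at primes above $11$. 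Running both elimination computations to completion, one for each of the subcases $2 \mid (a+b)$ and $11 \mid (a+b)$, would yield the theorem; the feasibility of these computations in the degree-$5$ totally real field $K$ is the principal technical obstacle.
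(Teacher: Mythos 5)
Your overall architecture---Kraus's Frey hyperelliptic curve $C_{11}(a,b)$ \cite{KrausNotes}, whose Jacobian $J_{11}(a,b)$ is of $\GL_2$-type with real multiplication by $\Q(\zeta_{11})^+$, followed by modularity, level lowering and newform elimination---is indeed the route of Billerey, Chen, Dieulefait and Freitas \cite{DarmonI} that this survey describes. The difficulty is in the level-lowering step, which as you state it does not go through. Theorem~\ref{thm:levellowering} is a statement about $\overline{\rho}_{E,p}$ for an \emph{elliptic curve} $E$ over a totally real field $K$ (with hypotheses such as semistability at the primes above $p$ and $K\nsubseteq\Q(\zeta_p)^+$); it says nothing about a five-dimensional abelian variety of $\GL_2$-type. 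Moreover you cannot simultaneously regard $\overline{\rho}_{J,\varpi}$ as a two-dimensional representation of $G_\Q$ and conclude that it arises from a Hilbert newform over the quintic field $K=\Q(\zeta_{11})^+$: either you work over $\Q$, in which case the relevant objects are classical newforms of weight $2$ and level supported at $2$ and $11$ with eigenvalues in subfields of $\Q(\zeta_{11})^+$, or you restrict to $G_K$, in which case the entire local analysis (conductor, inertial types at $2$ and $11$, behaviour at primes above $p$) must be redone over $K$ and is not supplied by the elliptic-curve statement. The tool actually used in \cite{DarmonI} is the level-lowering theorem of Breuil and Diamond \cite[Theorem 3.2.2]{BD14}, whose essential extra content---preservation of prescribed inertial types at the bad primes---is exactly what justifies your unproved assertion that ``the level collapses to $\mathfrak{m}^a\mathfrak{l}^b$'' with exponents small enough for the elimination to be a finite, feasible computation.

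Two further inputs that you flag but do not provide are genuine components of the proof rather than routine adaptations: irreducibility of $\overline{\rho}_{J,\varpi}$ (there is no Mazur-type isogeny theorem for $\GL_2$-type abelian varieties to invoke, and this is established in \cite{DarmonI} by a bespoke argument using the images of inertia and the structure of $J[\varpi]$), and modularity of $J_{11}(a,b)$, which does not follow from any of the modularity theorems for elliptic curves over totally real fields surveyed here. Finally, your reduction to large prime exponents is incomplete: while exponents divisible by $2$, $3$ or $11$ can be referred to known results, the cases $p=5,7$ of $x^{11}+y^{11}=z^p$ are not ``classical'' and must be treated within the main argument. So the proposal identifies the correct framework, but the level-lowering, irreducibility and modularity steps as written would fail or are missing.
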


The authors use a level-lowering result due 
to Breuil and Diamond \cite[Theorem 3.2.2]{BD14}; 
the steps are parallel to those outlined in the introduction. 
See also \cite{DarmonII} and \cite{Chen22} 
for applications of the modular approach using 
Frey hyperelliptic curves.

\part{Galois representations associated to elliptic curves}\label{part:irred}
We shall
introduce Galois representations of elliptic curves
from scratch.
Much of the material
in this part can in fact be found in Silverman's book
\cite{SilvermanI}, but we rewrite
it in fashion that emphasizes Galois representations.
Whilst it is possible to mostly avoid
speaking about Galois representations when
applying the modular approach over $\Q$
(e.g. \cite{SiksekModular}),
this is not possible over number fields,
as we do not have an analogue of 
Mazur's isogeny theorem. We shall therefore
go through this material in some detail.

\section{Definition and first examples}
Notation:

\begin{tabular}{ccl}
$K$ & \qquad & a perfect field\\
$G_K$ & \qquad & $=\Gal(\overline{K}/K)$ the absolute Galois group
of $K$\\
$N$ & \qquad & a positive integer, not divisible by the
characteristic of $K$.\\ 
$E$ & \qquad & an elliptic curve defined over $K$.
\end{tabular}

Recall
\begin{equation}\label{eqn:E[p]}
E[N]\cong \Z/N\Z \oplus \Z/N\Z
\end{equation}
and so $E[N]$ has rank $2$ as
a  $\Z/N\Z$-module.
If $K \subseteq \mathbb{C}$ then
we may see this as follows. Recall that there is some $\tau \in \mathbb{H}$
(the upper half-plane) and a complex analytic isomorphism
\[
E(\mathbb{C}) \cong \frac{\mathbb{C}}{\Z+\tau \Z}.
\]
Thus 
\[
E(\mathbb{C}) \cong \R/\Z \times \R/\Z
\]
 from which
\eqref{eqn:E[p]} follows.
However, $E[N] \subset E(\overline{K})$
and is stable under the action of $G_K$. In fact, $G_K$
acts linearly on $E[N]$, i.e.
\[
\sigma(P+Q)=\sigma(P)+\sigma(Q), \qquad \sigma(aP)=a\sigma(P)
\]
for any $P$, $Q \in E[N]$, $a \in \Z/N\Z$, and $\sigma \in G_K$.
Thus $\sigma \in G_K$ induces an automorphism
of the $\Z/N\Z$-module $E[N]$, and we write
$\overline{\rho}_{E,N}(\sigma)$ for this automorphism:
\[
\overline{\rho}_{E,N}(\sigma) : E[N] \rightarrow E[N], \qquad
P \mapsto \sigma(P).
\]
We obtain a representation (i.e. a homomorphism)
\[
\overline{\rho}_{E,N} \; : \; G_K \rightarrow \Aut(E[N])
\]
where $\Aut(E[N])$ is the automorphism group of $E[N]$.
The representation $\overline{\rho}_{E,N}$
 is known as the \textbf{mod $N$ Galois representation} attached to $E$.
Choosing a basis for $E[N]$ we
can identify $\overline{\rho}_{E,N}$ as a representation
\[
\overline{\rho}_{E,N} \; : \; G_K \rightarrow \GL_2(\Z/N\Z).
\]
To see this identification, let $P_1$, $P_2$ be a basis for $E[N]$.
Then, for any $\sigma \in \G_K$,
\[
\sigma(P_1)=a_\sigma P_1+c\sigma P_2, \qquad \sigma(P_2)=b_\sigma P_1+d_\sigma P_2,
\qquad a_\sigma,~b_\sigma,~c_\sigma,~d_\sigma \in \Z/N\Z.
\]
We associate to $\sigma$ the $2\times 2$ matrix
\[
\overline{\rho}_{E,N}(\sigma)=\begin{pmatrix}
a_\sigma & b_\sigma\\
c_\sigma & d_\sigma
\end{pmatrix}.
\]
It is easy to check that
\[
\overline{\rho}_{E,N}(\sigma \tau)=\overline{\rho}_{E,N}(\sigma) \overline{\rho}_{E,N}(\tau)
\]
for $\sigma$, $\tau \in G_K$. Applying this with $\tau=\sigma^{-1}$
shows that $\overline{\rho}_{E,N}(\sigma)$ is invertible: i.e.
$\overline{\rho}_{E,N}(\sigma) \in \GL_2(\Z/N\Z)$. 
We obtain a representation (i.e. homomorphism)
\[
\overline{\rho}_{E,N} \; : \; G_K \rightarrow \GL_2(\Z/N\Z),
\]
which is nothing more than our ealier $\rho_{E,N}$ written
in terms of $2 \times 2$-matrices.
The above construction makes it look as if $\overline{\rho}_{E,N}$
depends on the choice of basis $P_1$, $P_2$ for $E[N]$.
However, changing the basis results in an equivalent
representation.

\medskip

Since $\GL_2(\Z/N\Z)$ is finite, we know that the
kernel $\ker(\overline{\rho})$ is normal of finite index.
Moreover,
\[
\sigma \in \ker(\overline{\rho}) \iff 
\text{$P^\sigma=P$ for all $P \in E[N]$}.
\]
Thus
\[
\ker(\overline{\rho})=G_{K(E[N])}.
\]
Hence
\[
\overline{\rho}(G_K) \; \cong \; G_K/G_{K(E[N])} \; \cong \; \Gal(K(E[N])/K).
\]

\section{An Example: $\overline{\rho}_{E,2}$}\label{sec:example}
In simple examples we can sometimes guess what the image $\overline{\rho}(G_K)$
has to be. The simplest case is when $N=2$.
Here we are supposing that the characteristic of $K$ is not $2$.
We can write
\[
E \; : \; Y^2=f(X), \qquad f(X)=X^3+a X^2+bX+c \in K[X], \quad \Delta(f) \ne 0.
\]
Recall that the points of order $2$ are $(\theta_i,0)$
where $\theta_1$, $\theta_2$, $\theta_3$ are the roots of $f$.
Write $P_i=(\theta_i,0)$. Then $P_1$, $P_2$ is a basis
for $E[2]=\{0,P_1,P_2,P_3\}$ and $P_3=P_1+P_2$.
Observe that
\[
K(E[2])=K(\theta_1,\theta_2,\theta_3), \qquad
\Gal(K(E[2])/K)=\Gal(f).
\]

\begin{itemize}
\item If $\theta_1$, $\theta_2$, $\theta_3 \in K$, then
$\overline{\rho}=1$ (the trivial homomorphism).
\item Suppose $\theta_1 \in K$, $\theta_2 \notin K$
and so $\theta_3 \notin K$.
We can write $f(X)=(X-\theta_1)(X^2+uX+v)$ where $u$, $v \in K$,
and $d=u^2-4v \in K^*\setminus (K^*)^2$. Thus $\theta_2$, $\theta_3$
are the two roots of the irreducible quadratic factor $X^2+uX+v$,
and
$K(E[2])=K(\theta_2)=K(\theta_3)=K(\sqrt{d})$.
We shall write $\overline{\rho}_{E,2}$ with respect to the basis
$P_1$, $P_2$.
Let $\sigma \in G_K$.
If $\sigma(\sqrt{d})=\sqrt{d}$ then
\[
\sigma(P_1)=P_1, \quad \sigma(P_2)=P_2,
\qquad \overline{\rho}(\sigma)=
\begin{pmatrix}
1 & 0 \\
0 & 1
\end{pmatrix} \in \GL_2(\F_2)
\]
If $\sigma(\sqrt{d})=-\sqrt{d}$ then $\sigma$ swaps $\theta_2$, $\theta_3$,
so
\[
\sigma(P_1)=P_1, \quad \sigma(P_2)=P_3=P_1+P_2,
\qquad \overline{\rho}(\sigma)=\begin{pmatrix}
1 & 1 \\
0 & 1
\end{pmatrix} \in \GL_2(\F_2).
\]
Note that
\[
\overline{\rho}(G_K)=\left\{ 
\begin{pmatrix}
1 & 0 \\
0 & 1
\end{pmatrix} , \; 
\begin{pmatrix}
1 & 1 \\
0 & 1
\end{pmatrix} 
\right\} \cong \Z/2\Z \cong \Gal(K(\sqrt{d})/K) = \Gal(K(E[2])/K).
\]
\item Suppose $f$ is irreducible, but $\Delta(f) \in (K^*)^2$.
Then $\Gal(f) \cong A_3$.
Let $\sigma \in G_K$.
Then $\sigma$ acts on $(\theta_1,\theta_2,\theta_3)$
via one of the three permutations $\mathrm{id}$, $(1,2,3)$, $(1,3,2) \in A_3$.
\[
\begin{gathered}
 (\theta_1,\theta_2,\theta_3)^\sigma=
(\theta_1,\theta_2,\theta_3) \implies
P_1^\sigma=P_1, \quad
P_2^\sigma=P_1\implies
\overline{\rho}(\sigma)=
\begin{pmatrix}
1 & 0\\
0 & 1
\end{pmatrix}\\
 (\theta_1,\theta_2,\theta_3)^\sigma=
(\theta_2,\theta_3,\theta_1) \implies
P_1^\sigma=P_2, \quad
P_2^\sigma=P_3=P_1+P_2 \implies
\overline{\rho}(\sigma)=
\begin{pmatrix}
0 & 1\\
1 & 1
\end{pmatrix}\\
 (\theta_1,\theta_2,\theta_3)^\sigma=
(\theta_3,\theta_1,\theta_2) \implies
P_1^\sigma=P_3=P_1+2, \quad
P_2^\sigma=P_1 \implies
\overline{\rho}(\sigma)=
\begin{pmatrix}
1 & 1\\
1 & 0
\end{pmatrix}.
\end{gathered}
\]
Thus
\[
\overline{\rho}(G_K)\; = \;
\left\{
\begin{pmatrix}
1 & 0\\
0 & 1
\end{pmatrix}, \;
\begin{pmatrix}
0 & 1\\
1 & 1
\end{pmatrix}, \;
\begin{pmatrix}
1 & 1\\
1 & 0
\end{pmatrix}
\right\} \cong \Z/3\Z \cong \Gal(K(E[2])/K).
\]
\item Suppose $f$ is irreducible and $\Delta(f) \notin {K^*}^2$.
Thus $\Gal(K(E[2]/K)=\Gal(f)=S_3$. Thus $\overline{\rho}(G_K)$
is a subgroup of $\GL_2(\F_2)$ that is isomorphic to $S_3$.
But $\#S_3=\#\GL_2(\F_2)=6$. Hence $\overline{\rho}$
is surjective and we also arrive at the conclusion that
$S_3 \cong \GL_2(\F_2)$. It's easy to
write the matrix $\overline{\rho}(\sigma)$ in terms
of the action of $\sigma$ on $\theta_1$, $\theta_2$, $\theta_3$.
\end{itemize}

\medskip

\noindent \textbf{Important Remark.}
The image $\overline{\rho}(G_K) \subseteq \GL_2(\Z/N\Z)$
depends on a choice of basis for $E[N]$. If we change
basis then we conjugate $\overline{\rho}$ by
the change of basis matrix, which is an element
of $\GL_2(\Z/N\Z)$. So the image is really only
well defined up to conjugation.

\section{The mod $N$-cyclotomic character}
Let $\zeta_N$ be a primitive $N$-th root of $1$. Define
\textbf{the mod $N$-cyclotomic character}
$\chi_N \; : \; G_\Q \rightarrow (\Z/N\Z)^*$
as follows. For $\sigma \in G_\Q$ we see that $\zeta_N^\sigma$
is also a primitive $N$-root of unity and so
$\zeta_N^\sigma=\zeta_N^{a_\sigma}$
where $a_\sigma$ is an integer, coprime to $N$, and whose
value is defined only modulo $N$, i.e.
$a_\sigma \in (\Z/N\Z)^*$. We let $\chi_N(\sigma)=a_\sigma$.
To summarise,
\[
\chi_N \; : \; G_K \rightarrow (\Z/N\Z)^*, \qquad \zeta_N^\sigma=\zeta_N^{\chi_N(\sigma)}.
\]

\begin{thm} \label{thm:cyclotomiccharacter}
Let $K$ be a number field.
\begin{enumerate}
\item[(i)] If $\tau \in G_K$ denotes any complex
conjugation
\footnote{Let us explain what complex conjugation
is. Let $K$ be a number field and let
$\iota_\infty : K \hookrightarrow \R$ be a real embedding of $K$.
Let $\iota : \overline{K} \hookrightarrow \mathbb{C}$ be an embedding
extending $\iota_\infty$. Let $c : \mathbb{C} \rightarrow \mathbb{C}$ denote
complex conjugation. Then $\iota^{-1} \circ c \circ \iota$
is an element of $G_K$ which we call a \textbf{complex conjugation}.
Of course if $K$ is totally complex then it does not have any complex
conjugations. You can check that the conjugacy classes of
complex conjugations inside $G_K$ are in bijection with the real
embeddings of $K$},
then
$\chi_N(\tau)=-1$.
\item[(ii)] Let $\lambda \ne N$ be finite place of $K$,
and let $I_\lambda \subset G_K$
denote an inertia subgroup at $\lambda$. Then
$\chi_N(I_\lambda)=1$ (we say that
$\chi_N$ is \textbf{unramified} at $\lambda$).
Moreover, if $\sigma_\lambda \in G_K$ denotes a Frobenius
element at $\lambda$, then
\[
\chi_N(\sigma_\lambda) \equiv \Norm_{K/\Q}(\lambda) \pmod{N}.
\]
\end{enumerate}
\end{thm}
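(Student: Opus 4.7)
The plan is to exploit the fact that $\chi_N$ factors through the finite quotient $\Gal(K(\zeta_N)/K)$ and to compute the action on $\zeta_N$ directly in each case, using the explicit description $\zeta_N = e^{2\pi i/N}$ in a complex embedding for part (i), and reduction modulo a prime above $\lambda$ for part (ii).

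For part (i), I would fix a real embedding $\iota_\infty : K \hookrightarrow \R$, extend it to an embedding $\iota : \overline{K} \hookrightarrow \mathbb{C}$, and consider the complex conjugation $\tau = \iota^{-1} \circ c \circ \iota$. The image $\iota(\zeta_N)$ is a primitive $N$-th root of unity in $\mathbb{C}$, so it equals $e^{2\pi i k/N}$ for some integer $k$ coprime to $N$. Complex conjugation sends $e^{2\pi i k/N}$ to $e^{-2\pi i k/N} = \iota(\zeta_N)^{-1}$, so pulling back via $\iota^{-1}$ gives $\tau(\zeta_N) = \zeta_N^{-1}$, and hence $\chi_N(\tau) \equiv -1 \pmod{N}$.

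For part (ii), two things must be established. First, that $\chi_N$ is unramified at $\lambda$. Since $\chi_N$ factors through $\Gal(K(\zeta_N)/K)$, it suffices to show that $K(\zeta_N)/K$ is unramified at $\lambda$. I would show this by first recalling that $\Q(\zeta_N)/\Q$ is unramified outside primes dividing $N$ (one way is to note that the discriminant of the cyclotomic polynomial $\Phi_N$ divides a power of $N$, or equivalently that $X^N - 1$ remains separable modulo any rational prime $\ell \nmid N$). Since $\lambda$ lies over a rational prime $\ell \nmid N$, and ramification cannot be created by base change, $K(\zeta_N)/K$ is unramified at $\lambda$; hence $\chi_N(I_\lambda) = 1$ and $\chi_N(\sigma_\lambda)$ is well-defined.

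Second, for the Frobenius formula, I would fix a prime $\mathfrak{L}$ of $K(\zeta_N)$ above $\lambda$ with decomposition group containing a lift $\sigma_\lambda$ of the Frobenius of $\lambda$. Let $q = \Norm_{K/\Q}(\lambda)$, so that the residue field of $\lambda$ has $q$ elements and $\sigma_\lambda$ induces the $q$-power map on $\OO_{K(\zeta_N)}/\mathfrak{L}$. Because $\lambda \nmid N$, the polynomial $X^N - 1$ is separable in the residue field, so the reduction map restricted to the group $\mu_N$ of $N$-th roots of unity is injective. Comparing reductions, $\overline{\zeta_N^{\sigma_\lambda}} = \overline{\zeta_N}^{\, q} = \overline{\zeta_N^q}$, and by injectivity $\zeta_N^{\sigma_\lambda} = \zeta_N^q$, i.e.\ $\chi_N(\sigma_\lambda) \equiv q \pmod{N}$. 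The only non-formal step is the unramifiedness of cyclotomic extensions outside $N$; once that is granted, everything else is a straightforward residue-field computation.
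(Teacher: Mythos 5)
Your proof is correct, and for the Frobenius congruence it uses exactly the same mechanism as the paper: reduce modulo a prime above $\lambda$, use that $\lambda \nmid N$ makes reduction injective on $\mu_N$ (your separability of $X^N-1$ in the residue field is the same fact the paper phrases as \lq\lq the difference of two distinct $N$-th roots of $1$ divides $N$\rq\rq), and note that Frobenius induces the $q$-power map with $q=\Norm_{K/\Q}(\lambda)$. Part (i) is likewise the explicit computation the paper declares clear. The one place you genuinely diverge is the unramifiedness step: you pass to the finite quotient $\Gal(K(\zeta_N)/K)$, quote the classical fact that $\Q(\zeta_N)/\Q$ is unramified outside $N$, and then use that ramification is not created under the base change from $\Q$ to $K$ (a compositum/inertia-restriction argument you should at least gesture at, since it is the only non-elementary input). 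The paper instead argues directly inside $G_K$: an inertia element $\sigma \in I_\lambda$ satisfies $\zeta_N^\sigma \equiv \zeta_N \pmod{\mu}$ for a prime $\mu$ of $\overline{K}$ above $\lambda$ by the definition of inertia, and the same injectivity-of-reduction fact then forces $\zeta_N^\sigma=\zeta_N$, so $\chi_N(I_\lambda)=1$ with no appeal to the ramification theory of cyclotomic fields. Your route is perfectly valid and leans on a standard theorem; the paper's is slightly more self-contained, deriving unramifiedness from the same single divisibility fact that drives the Frobenius computation.
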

\begin{proof}
Part (i) is clear as complex conjugation takes $\zeta_N$ to $\zeta_N^{-1}$.

We turn to (ii). Corresponding to $I_\lambda$ is a prime $\mu \mid \lambda$
of $\overline{K}$ (changing $\mu$ conjugates $I_\lambda$ and so leaves the
desired result unaffected). By definition of inertia,
\[
\zeta_N^\sigma \equiv \zeta_N \pmod{\mu}
\]
for all $\sigma \in I_\lambda$. Recall that the difference of two distinct
$N$-th roots of $1$ divides $N$. As $\lambda \ne N$ we have $\mu \nmid N$.
Thus $\zeta_N^\sigma=\zeta_N$. But $\zeta_N^\sigma=\zeta_N^{\chi_N(\sigma)}$
by definition of $\chi_N$. It follow that $\zeta_N^{\chi_N(\sigma)}=\zeta_N$
and $\chi_N(\sigma)=1$ for all $\sigma \in I_\lambda$.

Now let $\sigma_\lambda$ be a Frobenius element corresponding to $\lambda$. Then
\[
\zeta_N^{\sigma_\lambda} \equiv \zeta_N^{\Norm_{K/\Q}(\lambda)} \pmod{\mu}
\]
by definition of Frobenius.
As above $\zeta_N^{\sigma_\lambda}=\zeta_N^{\Norm_{K/\Q}(\lambda)}$.
Hence $\chi_N(\sigma_\lambda)\equiv \Norm_{K/\Q}(\lambda) \pmod{N}$.
\end{proof}

\begin{thm}\label{thm:detrep}
$\det \overline{\rho}_{E,N}=\chi_N$.
\end{thm}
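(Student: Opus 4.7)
The plan is to use the Weil pairing on $E[N]$, which is the standard tool linking the Galois action on torsion to the cyclotomic character. Recall that the Weil pairing is a map
\[
e_N \; : \; E[N] \times E[N] \rightarrow \mu_N
\]
which is bilinear, alternating (so $e_N(P,P)=1$ and $e_N(Q,P)=e_N(P,Q)^{-1}$), non-degenerate, and Galois equivariant, in the sense that $e_N(\sigma P, \sigma Q)=\sigma(e_N(P,Q))$ for every $\sigma \in G_K$. I would either cite this (e.g.\ from Silverman's book, which the authors have already pointed the reader towards) or briefly recall the construction via divisors.

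Fix a basis $P_1$, $P_2$ of $E[N]$ with respect to which $\overline{\rho}_{E,N}$ is defined, and set $\zeta=e_N(P_1,P_2)$. Non-degeneracy of $e_N$ together with the fact that $E[N]\cong (\Z/N\Z)^2$ forces $\zeta$ to be a \emph{primitive} $N$-th root of unity; so we may take $\zeta_N=\zeta$ in the definition of $\chi_N$. For any $\sigma \in G_K$, Galois equivariance gives
\[
e_N(\sigma P_1,\sigma P_2) \; = \; \sigma(\zeta_N) \; = \; \zeta_N^{\chi_N(\sigma)}.
\]

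On the other hand, writing $\overline{\rho}_{E,N}(\sigma)=\begin{pmatrix} a_\sigma & b_\sigma\\ c_\sigma & d_\sigma \end{pmatrix}$ so that $\sigma P_1=a_\sigma P_1+c_\sigma P_2$ and $\sigma P_2=b_\sigma P_1+d_\sigma P_2$, bilinearity and the alternating property of $e_N$ give
\[
e_N(\sigma P_1,\sigma P_2) \; = \; e_N(P_1,P_2)^{a_\sigma d_\sigma - b_\sigma c_\sigma} \; = \; \zeta_N^{\det \overline{\rho}_{E,N}(\sigma)}.
\]
Comparing the two expressions and using that $\zeta_N$ has exact order $N$, one obtains $\det\overline{\rho}_{E,N}(\sigma)\equiv \chi_N(\sigma)\pmod{N}$ for every $\sigma$, which is the claim.

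The only non-trivial input is the existence of the Weil pairing with the three properties above; this is the main obstacle in a fully self-contained write-up, since constructing $e_N$ requires the Riemann--Roch machinery for divisors on $E$. For the purposes of this survey I would simply quote it as a black box and emphasize the two-line calculation above, which is the conceptual heart of the statement. A useful corollary worth recording immediately afterwards (and used repeatedly in level-lowering arguments) is that $\det\rho_{E,p}=\chi_p$ as $p$-adic characters, obtained by taking the inverse limit over the $p^n$-torsion.
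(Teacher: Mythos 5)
Your proposal is correct and takes essentially the same approach as the paper: both proofs rely on the Weil pairing's bilinearity, alternating property, non-degeneracy, and Galois equivariance, and both reduce the claim to the two-line computation comparing $e_N(\sigma P_1,\sigma P_2)$ via the Galois action on one hand and via $\det\overline{\rho}_{E,N}(\sigma)$ on the other. The only cosmetic difference is that the paper picks a basis $S,T$ in advance with $e_N(S,T)=\zeta_N$, whereas you start from an arbitrary basis, note that $e_N(P_1,P_2)$ is automatically a primitive $N$-th root of unity by non-degeneracy, and then use that as your $\zeta_N$ (which is harmless since $\chi_N$ is independent of the choice of primitive root) -- these are equivalent maneuvers.
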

\begin{proof}
Recall that the Weil pairing
\[
e_N \; : \; E[N] \times E[N] \rightarrow \mu_N=\langle \zeta_N \rangle
\]
is bilinear, alternating\footnote{As a reminder, \textbf{alternating}
means $e_N(S,S)=1$ for all $S \in E[N]$. This implies that $e_N$ is
\textbf{skew-symmetric}: $e_N(T,S)=e_N(S,T)^{-1}$. To see this note
\[
1=e_N(S+T,S+T)=e_N(S,S)e_N(S,T)e_N(T,S)e_N(T,T)=e_N(S,T)e_N(T,S).
\]
}, non-degenerate and Galois invariant.

As $e_N$ is non-degenerate, we may choose a basis $S$, $T$ for $E[N]$
such that $e_N(S,T)=\zeta_N$. Let $\sigma \in G_K$.
Write
\[
\overline{\rho}_{E,N}(\sigma)=\begin{pmatrix}
a & b \\
c & d
\end{pmatrix}.
\]
Thus
\[
S^\sigma=aS+cT, \qquad T^\sigma=bS+dT.
\]
Then
\[
\begin{split}
\zeta_N^{\chi_N(\sigma)}&=\zeta_N^{\sigma} \qquad \text{by definition of $\chi_N$}\\
&=e_N(S,T)^\sigma \qquad \text{by choice of $S$, $T$}\\
&=e_N(S^\sigma,T^\sigma) \qquad \text{by Galois invariance of $e_N$}\\
&=e_N(aS+cT,bS+dT)\\
&=e_N(S,S)^{ac} e_N(S,T)^{ad} e_N(T,S)^{bc} e_N(T,T)^{cd} \qquad
\text{by bilinearity of $e_N$}\\
&=e_N(S,T)^{ad-bc} \qquad \text{as $e_N$ is alternating}\\
&=\zeta_N^{ad-bc} \qquad \text{again by choice of $S$, $T$}.
\end{split}
\]
Hence $\chi_N(\sigma)=ad-bc=\det \overline{\rho}_{E,N}(\sigma)$
completing the proof.
\end{proof}

When $K$ is a number field we say that
a representation $\overline{\rho} : G_K \rightarrow \GL_2(\Z/N\Z)$
is \textbf{odd} if for every complex conjugation $\tau \in G_K$
we have $\det(\overline{\rho}(\tau))=-1$.

\begin{cor}
Let $E$ be an elliptic curve over a number field $K$.
Then $\overline{\rho}_{E,N}$ is odd.
\end{cor}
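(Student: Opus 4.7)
The plan is to deduce this immediately by combining Theorem~\ref{thm:detrep} and Theorem~\ref{thm:cyclotomiccharacter}(i). First I would recall the definition: $\overline{\rho}_{E,N}$ is odd precisely when $\det(\overline{\rho}_{E,N}(\tau)) = -1$ for every complex conjugation $\tau \in G_K$. The point is that a complex conjugation in $G_K$ only exists when $K$ has a real place; if $K$ is totally complex the statement is vacuous, so there is nothing to check.

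Next, I would fix a real embedding $\iota_\infty : K \hookrightarrow \R$ and any extension $\iota : \overline{K} \hookrightarrow \mathbb{C}$, yielding a complex conjugation $\tau = \iota^{-1} \circ c \circ \iota \in G_K$ in the sense defined in the footnote. Theorem~\ref{thm:cyclotomiccharacter}(i) was stated for $G_\Q$, but the proof given there works verbatim for $G_K$ whenever $K$ admits a real embedding: such a $\tau$ restricted to $\Q(\zeta_N)$ is the nontrivial element of $\Gal(\Q(\zeta_N)/\Q(\zeta_N)^+)$, so it sends $\zeta_N$ to $\zeta_N^{-1}$, which gives $\chi_N(\tau) = -1$.

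Finally, applying Theorem~\ref{thm:detrep} gives
\[
\det(\overline{\rho}_{E,N}(\tau)) \; = \; \chi_N(\tau) \; = \; -1,
\]
as required. Since the argument applies to every complex conjugation $\tau \in G_K$, we conclude that $\overline{\rho}_{E,N}$ is odd.

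There is essentially no obstacle here: the work has all been done in the two preceding theorems, and this corollary is just the composition of those two facts. The only minor subtlety worth flagging is the remark above that the statement is vacuous for totally complex $K$, so the oddness condition imposes no constraint in that case. This matches the well-known principle that odd two-dimensional Galois representations (which are the ones conjecturally attached to Hilbert modular forms of parallel weight~$2$) naturally arise from elliptic curves precisely over totally real fields, and motivates the focus of the survey on the totally real setting.
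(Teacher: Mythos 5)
Your proof is correct and is exactly the paper's argument: the corollary is obtained by combining Theorem~\ref{thm:detrep} with Theorem~\ref{thm:cyclotomiccharacter}(i), together with the same remark that the statement is vacuous for totally complex $K$. (Note the paper's Theorem~\ref{thm:cyclotomiccharacter} is in fact already stated for a general number field $K$, so your careful extension from $G_\Q$ to $G_K$, while harmless, is not even needed.)
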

\begin{proof}
This follows from Theorems~\ref{thm:cyclotomiccharacter}
and \ref{thm:detrep}.
\end{proof}
Of course if $K$ is totally complex, then the corollary is vacuous.

\section{Torsion and isogenies}
\begin{thm}\label{thm:ptorsion}
The following are equivalent:
\begin{enumerate}
\item[(a)] $E$ has a $K$-rational point of order $N$;
\item[(b)]
$\displaystyle \overline{\rho}_{E,N} \sim
\begin{pmatrix}
1 & * \\
0 & \chi_N
\end{pmatrix}$.
\item[(c)] $\overline{\rho}_{E,N}(G_K)$ is conjugate inside
$\GL_2(\Z/N\Z)$ to a subgroup of
\[
B_1(N) \; := \; 
\left\{ 
\begin{pmatrix}
1 & b \\
0 & d
\end{pmatrix} \; : \; b \in \Z/N\Z, \; d \in (\Z/N\Z)^*\right\}
\subset \GL_2(\Z/N\Z).
\]
\end{enumerate}
\end{thm}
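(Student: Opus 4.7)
The plan is to prove the three conditions are equivalent by establishing $(a) \Rightarrow (b) \Rightarrow (c) \Rightarrow (a)$, all of which are essentially a matter of choosing the right basis for $E[N]$ and reading off the resulting matrix.

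For $(a) \Rightarrow (b)$, I would start with a $K$-rational point $P_1 \in E(K)$ of order $N$. Since $E[N] \cong \Z/N\Z \oplus \Z/N\Z$ as a $\Z/N\Z$-module, one can extend $P_1$ to a basis $\{P_1, P_2\}$ of $E[N]$. For any $\sigma \in G_K$, $\sigma$-invariance of $P_1$ forces $\sigma(P_1) = P_1$, while $\sigma(P_2) = b_\sigma P_1 + d_\sigma P_2$ for some $b_\sigma \in \Z/N\Z$ and $d_\sigma \in (\Z/N\Z)^*$. Thus with respect to this basis
\[
\overline{\rho}_{E,N}(\sigma) = \begin{pmatrix} 1 & b_\sigma \\ 0 & d_\sigma \end{pmatrix}.
\]
Taking determinants and applying Theorem~\ref{thm:detrep}, one obtains $d_\sigma = \chi_N(\sigma)$, so the representation has precisely the claimed upper-triangular shape with diagonal $(1, \chi_N)$.

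The implication $(b) \Rightarrow (c)$ is essentially free: any matrix of the form $\begin{pmatrix} 1 & * \\ 0 & \chi_N(\sigma) \end{pmatrix}$ lies in $B_1(N)$ by definition of $B_1(N)$, so the image is contained in $B_1(N)$ (up to the same conjugation witnessing $\sim$).

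For $(c) \Rightarrow (a)$, I would undo the conjugation. Suppose there exists a basis $\{P_1, P_2\}$ of $E[N]$ with respect to which $\overline{\rho}_{E,N}(G_K) \subseteq B_1(N)$. Then for every $\sigma \in G_K$, the first column of $\overline{\rho}_{E,N}(\sigma)$ is $(1,0)^T$, which translates to $\sigma(P_1) = P_1$. Hence $P_1 \in E(K)$, and as a basis vector of $E[N]$ it has exact order $N$, giving the required $K$-rational point.

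I do not expect any serious obstacle here: the content is entirely linear-algebraic once one knows that the determinant of $\overline{\rho}_{E,N}$ is the cyclotomic character (Theorem~\ref{thm:detrep}). The only mildly subtle point is to use $\det = \chi_N$ in the $(a) \Rightarrow (b)$ direction so that the lower-right entry is not merely an arbitrary unit but exactly $\chi_N(\sigma)$; without this observation one would only conclude $\overline{\rho}_{E,N}$ lies in $B_1(N)$, but the statement $(b)$ pins down the diagonal character.
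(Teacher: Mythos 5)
Your proof is correct and follows the same route as the paper: extend the $K$-rational point of order $N$ to a $\Z/N\Z$-basis, read off the upper-triangular matrix, pin down the lower-right entry via $\det\overline{\rho}_{E,N}=\chi_N$, and reverse the basis argument for $(c)\Rightarrow(a)$. The paper's proof is essentially identical, including the dismissal of $(b)\Rightarrow(c)$ as immediate.
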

\begin{proof}
(a)$\implies$(b). Suppose $E$ has a $K$-rational point $P$ of order $N$.  Let $Q \in E[N]$
so that $P$, $Q$ is a $\Z/N\Z$-basis for $E[N]$. Then for all $\sigma \in G_K$,
we have
\[
\sigma(P)=P, \qquad \sigma(Q)=b_\sigma P+d_\sigma Q.
\]
Hence
\[
\overline{\rho}_{E,N}(\sigma)=\begin{pmatrix}
1 & b_\sigma \\
0 & d_\sigma
\end{pmatrix}
\]
for all $\sigma \in G_K$.
However, by Theorem~\ref{thm:detrep},
\[
d_\sigma=\det \overline{\rho}_{E,N}(\sigma)=\chi_N(\sigma)
\]
Thus (b) holds.

(b)$\implies$(c). This is clear.

(b)$\implies$(c). This is clear.

(c)$\implies$(a). Suppose (c). Then we can choose
a basis $P$, $Q$ so that the image $\overline{\rho}_{E,N}(G_K)$
is contained in $B_1(N)$. Note that
\[
P^\sigma=P, \qquad Q^\sigma=b_\sigma P+d_\sigma  Q
\]
for all $\sigma \in G_K$ (where $b_\sigma$, $d_\sigma \in \Z/N\Z$).
As $P$ is fixed by $G_K$ it follows that $P \in E(K)$.
Since $P$, $Q$ is a basis, $P$ must have exact order $N$,
proving (a).
\end{proof}

\begin{thm}\label{thm:pisog}
The following are equivalent:
\begin{enumerate}
\item[(a)] $E$ has a cyclic $K$-rational $N$-isogeny;
\item[(b)]
$\displaystyle \overline{\rho}_{E,N} \sim
\begin{pmatrix}
\phi & * \\
0 & \psi
\end{pmatrix}$,
where $\phi$, $\psi : G_K \rightarrow (\Z/N\Z)^*$
are characters satisfying $\phi \psi=\chi_N$.
\item[(c)] $\overline{\rho}_{E,N}(G_K)$ is conjugate inside
$\GL_2(\Z/N\Z)$ to a subgroup of
\[
B_0(N) \; := \; 
\left\{ 
\begin{pmatrix}
a & b \\
0 & d
\end{pmatrix} \; : \; b \in \Z/N\Z, \quad a,~d \in (\Z/N\Z)^*\right\}
\subset \GL_2(\Z/N\Z).
\]
\end{enumerate}
\end{thm}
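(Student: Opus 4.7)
The plan is to mimic the proof of Theorem \ref{thm:ptorsion}, replacing ``$K$-rational point of order $N$'' by ``$G_K$-stable cyclic subgroup of order exactly $N$ in $E[N]$''. The key preliminary observation is a dictionary: a cyclic $K$-rational $N$-isogeny $\varphi : E \to E^\prime$ is the same data as a $G_K$-stable cyclic subgroup $C \subseteq E[N]$ of order $N$ (one takes $C = \ker \varphi$ in one direction and $E^\prime = E/C$ in the other). Once this is in place, everything happens inside the $G_K$-module $E[N]$.

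For (a)$\Rightarrow$(b), I would choose a generator $P$ of such a $C$, extend it to a $\Z/N\Z$-basis $P$, $Q$ of $E[N]$, and write down the matrix of $\overline{\rho}_{E,N}(\sigma)$ in this basis. Galois-stability of $\langle P\rangle$ gives $\sigma(P)=\phi(\sigma) P$, and because $P$ has exact order $N$ the scalar $\phi(\sigma)$ lies in $(\Z/N\Z)^*$. Writing $\sigma(Q)= b_\sigma P + \psi(\sigma) Q$, the matrix is upper triangular with diagonal entries $(\phi(\sigma), \psi(\sigma))$. A short calculation with the product of two upper triangular matrices shows that the diagonal entries compose as homomorphisms, so $\phi, \psi : G_K \rightarrow (\Z/N\Z)^*$ are characters. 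The relation $\phi \psi = \chi_N$ is then immediate from Theorem \ref{thm:detrep}.

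The implication (b)$\Rightarrow$(c) is tautological from the definition of $B_0(N)$. For (c)$\Rightarrow$(a) I would fix a basis $P$, $Q$ realising $\overline{\rho}_{E,N}(G_K) \subseteq B_0(N)$; then every $\sigma \in G_K$ sends $P$ to a scalar multiple of itself, so $\langle P \rangle$ is $G_K$-stable. It is cyclic by construction and of order exactly $N$ since $P$, $Q$ is a basis of $E[N]$. The quotient map $E \to E/\langle P \rangle$ is then the required cyclic $K$-rational $N$-isogeny.

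There is no real obstacle: the proof is a direct elaboration of the pattern already set in Theorem \ref{thm:ptorsion}. The one place a reader might pause is the assertion that $\phi(\sigma)$ is a \emph{unit} modulo $N$, but this is just the statement that $\sigma$ acts as an automorphism on $\langle P\rangle$, which follows from $\sigma$ being an automorphism of $E[N]$ that preserves this subgroup.
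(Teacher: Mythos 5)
Your proof is correct and follows essentially the same route as the paper: pass from the isogeny to the $G_K$-stable cyclic subgroup $\langle P\rangle$ of order $N$, extend $P$ to a basis of $E[N]$ to get the upper-triangular shape, deduce that the diagonal entries are characters with $\phi\psi=\chi_N$ via Theorem~\ref{thm:detrep}, and reverse the construction for (c)$\implies$(a) by taking the quotient $E\to E/\langle P\rangle$. The paper only writes out (a)$\implies$(b) and explicitly leaves the remaining checks to the reader, so your write-up simply supplies those details in the intended way.
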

\begin{proof}
(a)$\implies$(b).
Suppose $E$ has a cyclic $K$-rational $N$-isogeny $\theta : E \rightarrow E$.
The $\ker(\theta)$ is cyclic of order $N$ and thus
$\ker(\theta)=\langle P \rangle$
where $P$ is an element of $E[N]$ of order $N$. As $\theta$ is
defined over $K$, the group $\langle P \rangle$ is $K$-rational
(i.e. it is stable under the action of $G_K$). Let $Q \in E[N]$
be such that $P$, $Q$ is a basis. Then for all $\sigma \in G_K$,
we have
\[
P^\sigma=a_\sigma P, \qquad Q^\sigma=b_\sigma P + d_\sigma Q.
\]
Hence
\[
\overline{\rho}_{E,N}(\sigma)=\begin{pmatrix}
a_\sigma & b_\sigma \\
0 & d_\sigma
\end{pmatrix}
\]
for all $\sigma \in G_K$.
Let $\phi$, $\psi : G_K \rightarrow (\Z/N\Z)^*$ be given by
$\phi(\sigma)=a_\sigma$, $\psi(\sigma)=d_\sigma$.
We leave to the reader the task of checking
that $\phi$, $\psi$ must be characters, and completing the remainder
of the proof.
\end{proof}

\section{Quadratic twisting}
\begin{lem}\label{lem:twistrep}
Let $d \in K^*$. Suppose
the characteristic of $K$ is not $2$. Let $E^\prime$ be the quadratic twist of $E$ by $d$.
Let $\psi  :  G_K \rightarrow \{1,-1\}$ be the quadratic
character defined by
$\sqrt{d}^\sigma=\psi(\sigma) \cdot \sqrt{d}$.
Then $\overline{\rho}_{E,N} \sim \psi \cdot \overline{\rho}_{E^\prime,N}$.
\end{lem}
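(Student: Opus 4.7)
The strategy is to exhibit an explicit isomorphism $\phi : E^\prime \rightarrow E$ defined over the quadratic extension $L=K(\sqrt{d})$, transport a basis of $E^\prime[N]$ across $\phi$, and then track how $G_K$ acts on the image. The quadratic character $\psi$ will appear precisely because $\phi$ is not itself defined over $K$, but only over $L$.

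First, I would fix a short Weierstrass model $E : Y^2=X^3+aX+b$ (legitimate since $\Char(K)\ne 2$), so that the quadratic twist may be taken as $E^\prime : Y^2=X^3+ad^2 X+b d^3$. Over $L$ one has the standard isomorphism
\[
\phi \; : \; E^\prime \longrightarrow E, \qquad (X,Y) \longmapsto (X/d,\; Y/(d\sqrt{d})).
\]
The key computation is the behaviour of $\phi$ under $\sigma \in G_K$: since the only coefficient of $\phi$ not in $K$ is $\sqrt{d}$, and $\sqrt{d}^{\,\sigma}=\psi(\sigma)\sqrt{d}$, one obtains $\phi^\sigma = [\psi(\sigma)]\circ \phi$, where $[\pm 1]$ denotes multiplication by $\pm 1$ on $E$. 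In particular, if $\psi(\sigma)=1$ then $\phi$ commutes with $\sigma$, and if $\psi(\sigma)=-1$ then $\phi^\sigma = -\phi$.

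Next, pick a $\Z/N\Z$-basis $P_1^\prime,P_2^\prime$ of $E^\prime[N]$ and set $P_i = \phi(P_i^\prime)\in E[N]$; since $\phi$ is a group isomorphism on geometric points, $P_1,P_2$ is a $\Z/N\Z$-basis of $E[N]$. For $\sigma\in G_K$ and any $P^\prime \in E^\prime[N]$,
\[
\sigma\bigl(\phi(P^\prime)\bigr)=\phi^\sigma(\sigma(P^\prime))=\psi(\sigma)\cdot \phi\bigl(\overline{\rho}_{E^\prime,N}(\sigma)(P^\prime)\bigr).
\]
Applying this with $P^\prime = P_i^\prime$ and expanding the right-hand side in the basis $P_1,P_2$ immediately yields
\[
\overline{\rho}_{E,N}(\sigma) = \psi(\sigma)\cdot \overline{\rho}_{E^\prime,N}(\sigma) \qquad \text{in } \GL_2(\Z/N\Z),
\]
for every $\sigma\in G_K$ (noting that $\psi(\sigma)\in\{\pm 1\}$ makes sense modulo $N$ for any $N$, since the characteristic of $K$ is not $2$). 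This is the claimed equivalence $\overline{\rho}_{E,N}\sim \psi\cdot \overline{\rho}_{E^\prime,N}$.

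The only delicate point is the assertion $\phi^\sigma=[\psi(\sigma)]\circ\phi$, and in particular that the sign $\psi(\sigma)=-1$ corresponds to the inversion map $[-1]$ on $E$ rather than some other automorphism. This is transparent in the model above because $\phi$ differs from an isomorphism defined over $K$ only by the scalar $1/\sqrt{d}$ in the $Y$-coordinate, and inversion on a short Weierstrass model is $(X,Y)\mapsto(X,-Y)$. Once this is verified in a chosen model, the result is coordinate-free, since different $K$-models of $E$ and $E^\prime$ differ by $K$-rational isomorphisms which do not change the Galois representations.
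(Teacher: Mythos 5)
Your proof is correct and follows essentially the same route as the paper: both exhibit the explicit twist isomorphism $(X,Y)\mapsto(X/d,\,Y/(d\sqrt{d}))$ over $K(\sqrt{d})$, transport a basis of $N$-torsion across it, and observe that applying $\sigma$ introduces the sign $\psi(\sigma)$ via $\sqrt{d}$. The only differences are cosmetic (short Weierstrass model, the map written from $E'$ to $E$, and the Galois action phrased through $\phi^{\sigma}=[\psi(\sigma)]\circ\phi$ rather than a direct coordinate computation).
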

\begin{proof}
As the characteristic is not $2$, the curves $E$, $E^\prime$ have models
\[
E \; : \; Y^2 \; = \; X^3+aX^2+bX+c, \qquad
E^\prime \; : \; Y^2 \; = \; X^3+daX^2+d^2bX+d^3c.
\]
The map
\[
\phi : E(\overline{K}) \rightarrow E^\prime(\overline{K}),
\qquad
\phi(x,y)\; =\; \left(\frac{x}{d}, \; \frac{y}{d\sqrt{d}}\right)
\]
is an isomorphism of abelian groups, and thus induces an isomorphism
$\phi  : E[N]\rightarrow E^\prime[N]$. Let $P=(x,y) \in E[N]$.
Note that $\pm P=(x,\pm y)$. Thus,
\[
\phi(P)^\sigma \; =\; 
\left(\frac{x^\sigma}{d}, \; \frac{y^\sigma}{d\sqrt{d}^\sigma}\right) \; = \;
\left(\frac{x^\sigma}{d}, \; \psi(\sigma) \cdot \frac{y^\sigma}{d\sqrt{d}}\right) \; = \;
\psi(\sigma) \cdot \left(\frac{x^\sigma}{d}, \; \frac{y^\sigma}{d\sqrt{d}}\right) \; = \;
\psi(\sigma) \cdot \phi(P^\sigma) .
\]
Now let $P$, $Q$ be a basis for $E[N]$, and we take
$\phi(P)$, $\phi(Q)$ as a basis for $E^\prime[N]$.
With respect to these bases it is now
an easy exercise to show that
$\overline{\rho}_{E,N}=\psi \cdot \overline{\rho}_{E^\prime,N}$.
\end{proof}

\begin{thm}
Let $H$ be a subgroup of $\GL_2(\Z/N\Z)$. Suppose that
$\overline{\rho}_{E,N}(G_K)$ is contained in $H$.
Let $E^\prime$ be a quadratic twist by some $d \in K^*$.
If $-I \in H$, then
$\overline{\rho}_{E^\prime,N}(G_K)$ is contained in a conjugate
of $H$.
\end{thm}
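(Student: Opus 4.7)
The plan is to deduce the statement directly from Lemma~\ref{lem:twistrep}, which already encodes the relationship between $\overline{\rho}_{E,N}$ and $\overline{\rho}_{E^\prime,N}$ via the quadratic twisting character $\psi$. The only subtlety is that the tilde in the lemma means \emph{equivalent} as representations, so we pick up an auxiliary conjugation by some change-of-basis matrix; and, more importantly, we must use the hypothesis $-I \in H$ to absorb the scalar $\psi(\sigma) \in \{1,-1\}$ back into $H$.

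First I would apply Lemma~\ref{lem:twistrep} to obtain a matrix $M \in \GL_2(\Z/N\Z)$ (the change of basis between the bases for $E[N]$ and $E^\prime[N]$ used implicitly in the lemma) such that
\[
M \, \overline{\rho}_{E,N}(\sigma) \, M^{-1} \; = \; \psi(\sigma) \cdot \overline{\rho}_{E^\prime,N}(\sigma)
\]
for every $\sigma \in G_K$, where $\psi : G_K \rightarrow \{1,-1\}$ is the quadratic character attached to $d$. Rearranging gives
\[
\overline{\rho}_{E^\prime,N}(\sigma) \; = \; \psi(\sigma) \cdot M \, \overline{\rho}_{E,N}(\sigma) \, M^{-1}.
\]

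Next I would observe that $M \, \overline{\rho}_{E,N}(\sigma) \, M^{-1} \in M H M^{-1}$ by assumption, and that the scalar $\psi(\sigma) I$ equals either $I$ or $-I$. Since $-I \in H$, conjugation gives $-I \in M H M^{-1}$ as well (note $-I$ is central, so it is fixed by conjugation), hence $\pm I \in M H M^{-1}$. Therefore $\psi(\sigma) I \cdot M H M^{-1} = M H M^{-1}$, and we conclude $\overline{\rho}_{E^\prime,N}(\sigma) \in M H M^{-1}$ for all $\sigma$, so $\overline{\rho}_{E^\prime,N}(G_K)$ is contained in the conjugate subgroup $M H M^{-1}$.

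There is no serious obstacle here; the whole argument is a bookkeeping exercise once Lemma~\ref{lem:twistrep} is in hand. The one place where care is required is recognising that the hypothesis $-I \in H$ is exactly what is needed to cancel the sign coming from $\psi(\sigma)$, and that without this hypothesis the conclusion could fail (for instance, $H$ could fail to contain the twisted image precisely because the scalar $-I$ would throw representative matrices outside $H$).
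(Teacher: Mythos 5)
Your proof is correct and is exactly the argument the paper intends: the paper simply states that the theorem follows immediately from Lemma~\ref{lem:twistrep}, and your write-up supplies the details of that deduction, namely the conjugating matrix coming from the equivalence and the use of $-I \in H$ (with $-I$ central) to absorb the sign $\psi(\sigma)$.
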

\begin{proof}
This follows immediately from Lemma~\ref{lem:twistrep}.
\end{proof}

\begin{cor}
If $E$ has a cyclic $K$-rational $N$ isogeny, then so
does any quadratic twist.
\end{cor}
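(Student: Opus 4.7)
The plan is to combine Theorem~\ref{thm:pisog} with the preceding theorem on quadratic twists. First, assume $E$ admits a cyclic $K$-rational $N$-isogeny. By the equivalence (a)$\Leftrightarrow$(c) in Theorem~\ref{thm:pisog}, there is some basis of $E[N]$ with respect to which $\overline{\rho}_{E,N}(G_K)$ lies inside $B_0(N)$; equivalently, after replacing $\overline{\rho}_{E,N}$ by a conjugate, its image is contained in $B_0(N)$.

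Next I would observe that $-I \in B_0(N)$, since
\[
-I \; = \; \begin{pmatrix} -1 & 0 \\ 0 & -1 \end{pmatrix}
\]
is upper triangular with invertible diagonal entries. Hence the hypothesis $-I \in H$ of the preceding theorem is satisfied with $H = B_0(N)$.

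Now let $E^\prime$ be the quadratic twist of $E$ by some $d \in K^*$. Applying the preceding theorem with $H = B_0(N)$, we conclude that $\overline{\rho}_{E^\prime,N}(G_K)$ is contained in a conjugate of $B_0(N)$. Invoking the implication (c)$\Rightarrow$(a) of Theorem~\ref{thm:pisog} in the reverse direction, this means that $E^\prime$ has a cyclic $K$-rational $N$-isogeny, completing the proof.

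There is no real obstacle here: the entire argument is a formal consequence of what has been built up, the only substantive point being the closure of $B_0(N)$ under scalar multiplication by $-1$, which ensures that twisting by the quadratic character $\psi$ from Lemma~\ref{lem:twistrep} keeps the image inside a conjugate of $B_0(N)$.
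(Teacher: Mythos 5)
Your argument is correct and is exactly the intended route: the paper states this corollary immediately after the twisting theorem, leaving implicit precisely the application you spell out, namely taking $H=B_0(N)$ (which contains $-I$ since $-1\in(\Z/N\Z)^*$) together with the equivalence (a)$\Leftrightarrow$(c) of Theorem~\ref{thm:pisog}. No gaps; the only substantive point, closure of the image under the sign coming from the quadratic character, is the one you correctly identify.
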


For $N \ge 3$, if $E$ is an elliptic curve with a $K$-rational
point of order $N$
then a non-trivial quadratic twist will not have a point of order $N$,
but it will have an $N$-isogeny.

\begin{exercise}
Suppose $E$ has a $K$-rational $3$-isogeny. Show that there
is a quadratic twist $E^\prime$ that has a point of order $3$.
\end{exercise}

\section{Local properties of mod $N$ representations of elliptic curves}

Let $K$ be a number field and $\lambda$
be a prime of $K$. Let $E$ be an elliptic curve defined over $K$.
We say that $\overline{\rho}_{E,N}$ is \textbf{unramified} at $\lambda$
if $\overline{\rho}_{E,N}(I_\lambda)=1$, where $I_\lambda \subseteq G_K$ denotes an inertia
subgroup at $\lambda$.

\begin{thm}
Suppose $\lambda \nmid N$ is a prime of good reduction for $E$. Then
$\overline{\rho}_{E,N}$ is unramified at $\lambda$.
\end{thm}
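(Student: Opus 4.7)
The plan is to reduce everything to a local question at $\lambda$ and then exploit the injectivity of the reduction map on prime-to-$\lambda$ torsion.

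First I would pass to the completion: let $K_\lambda$ be the completion of $K$ at $\lambda$, with ring of integers $\OO_\lambda$, maximal ideal $\mathfrak{m}_\lambda$, and residue field $k=\OO_\lambda/\mathfrak{m}_\lambda$. Fixing a prime $\mu$ of $\overline{K}$ above $\lambda$, the decomposition group at $\mu$ may be identified with $G_{K_\lambda}=\Gal(\overline{K_\lambda}/K_\lambda)$, and inside it sits $I_\lambda$, which is precisely the kernel of the reduction map $G_{K_\lambda}\twoheadrightarrow G_k=\Gal(\overline{k}/k)$. In particular, every $\sigma\in I_\lambda$ acts trivially on the residue field $\overline{k}$.

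Next, since $E$ has good reduction at $\lambda$, $E$ admits a smooth Weierstrass model over $\OO_\lambda$; let $\widetilde{E}/k$ denote its reduction. There is a reduction map
\[
\mathrm{red}\;:\;E(\overline{K_\lambda})\longrightarrow \widetilde{E}(\overline{k}),
\]
which is a homomorphism of abelian groups and is $G_{K_\lambda}$-equivariant (where $G_{K_\lambda}$ acts on $\widetilde{E}(\overline{k})$ through the quotient $G_k$). The kernel of reduction on $E(\overline{K_\lambda})$ is the formal group $\widehat{E}(\mathfrak{m}_{\overline{K_\lambda}})$, and standard formal-group theory (as in Silverman, \emph{Arithmetic of Elliptic Curves}, Ch.~IV and VII) shows that in residue characteristic $\ell$ (or in characteristic $0$ if $k$ has characteristic $0$), $\widehat{E}(\mathfrak{m}_{\overline{K_\lambda}})$ is torsion-free away from $\ell$. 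Because $\lambda\nmid N$, no nonzero $P\in E[N]$ lies in the kernel, so the restriction $\mathrm{red}\vert_{E[N]}:E[N]\hookrightarrow \widetilde{E}(\overline{k})$ is injective.

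To finish, take any $\sigma\in I_\lambda$ and any $P\in E[N]$. By Galois-equivariance of reduction, $\mathrm{red}(P^\sigma)=\mathrm{red}(P)^\sigma=\mathrm{red}(P)$, the last equality because $\sigma$ acts trivially on $\overline{k}$ and hence on $\widetilde{E}(\overline{k})$. Injectivity of $\mathrm{red}$ on $E[N]$ then forces $P^\sigma=P$, so $\overline{\rho}_{E,N}(\sigma)=I$. Hence $\overline{\rho}_{E,N}(I_\lambda)=1$, i.e.\ $\overline{\rho}_{E,N}$ is unramified at $\lambda$.

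The only nontrivial ingredient is the injectivity of reduction on prime-to-residue-characteristic torsion, which is the main obstacle in the sense that it relies on the structure theory of formal groups; everything else (equivariance, triviality of $I_\lambda$ on $\overline{k}$) is formal.
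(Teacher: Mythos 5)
Your argument is correct and is essentially the paper's proof: both rest on the injectivity of the reduction-mod-$\mu$ map on $E[N]$ (valid since $E$ has good reduction at $\lambda$ and $\lambda \nmid N$) together with the fact that inertia acts trivially on the residue field, so $\widetilde{P^\sigma}=\tilde{P}$ forces $P^\sigma=P$. The only difference is that you spell out the formal-group justification for the injectivity, which the paper simply quotes as a standard fact.
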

\begin{proof}
The choice of inertia subgroup $I_\lambda$ corresponds to a choice of prime
$\mu$ of $\overline{K}$
above $\lambda$. As $E$ has good reduction at $\mu$ and $\mu \nmid N$, the reduction modulo $\mu$
map
\begin{equation}\label{eqn:redmodmu}
E[N] \rightarrow E(\overline{\F}_\lambda), \qquad Q \mapsto \tilde{Q} \pmod{\mu}
\end{equation}
is injective. Let $\sigma \in I_\lambda$. Then for all $Q \in E[N]$
we have that $\widetilde{Q^\sigma} = \tilde{Q}$ by definition of inertia. By the injectivity of \eqref{eqn:redmodmu}
we have $Q^\sigma=Q$. Thus $\overline{\rho}_{E,N}(\sigma)=1$ which completes the proof.
\end{proof}

\section{The mod $N$ representation of a Tate curve}
Another very instructive computation is the mod $N$
representation of a Tate curve.
The standard reference for Tate curves is Silverman's advanced
textbook \cite[Chapter V]{SilvermanII}
In this section $K$ is a field complete with respect
to a non-archimedean valuation $\lvert \cdot \rvert$
(e.g. $K=\Q_p$). Let $q \in K^*$ satisfy $\lvert q \rvert<1$.
Define
\[
s_k(q) \; :=\; 
\sum_{n \ge 1} \frac{n^k q^n}{1-q^n} \, , \qquad
a_4(q) \; := \; -s_3(q) \, , \qquad
a_5(q) \; := \; - \frac{5 s_3(q)+7 s_5(q)}{12}.
\]
These converge in $K$.
Define the \textbf{Tate curve} with parameter
$q$ by
\[
E_q \; : \; \qquad
Y^2+XY=X^3+a_4(q) X + a_6(q).
\]
This is an elliptic curve over $K$ with discriminant
\[
\Delta \; = \; q \prod_{n \ge 1} (1-q^n)^{24} \, ,
\]
and $j$-invariant
\begin{equation}\label{eqn:jq}
j \; = \; \frac{1}{q} + 744 + 196884 q^2 + \cdots .
\end{equation}
\begin{example}
If $E/\Q_p$ has split multiplicative reduction, then $E \cong E_q$
for some choice of $q \in \Q_p$ (i.e. $E$ is a Tate curve).
If $E/\Q_p$ has potentially multiplicative reduction (i.e. $\lvert j(E) \rvert_p >1$)
then $E$ is the quadratic twist of some Tate curve $E_q$ by $-c_4(E)/c_6(E)$
where $c_4$, $c_6$ has their usual meanings.
\end{example}

\begin{thm}[Tate]
There is an analytic isomorphism
\[
\phi \; : \; E_q(\overline{K}) \rightarrow \overline{K}^*/ q^{\Z} \, ,
\]
which is compatible with the action of $G_K$.
\end{thm}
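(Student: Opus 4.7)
The plan is to construct the inverse map $\psi \colon \overline{K}^* / q^{\Z} \rightarrow E_q(\overline{K})$ explicitly by $q$-series, and then verify it is a bijective group homomorphism compatible with $G_K$. Since $K$ is complete and non-archimedean, the absolute value $\lvert \cdot \rvert$ extends uniquely to $\overline{K}$, so the convergence of series over $\overline{K}$ can be checked termwise.

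First I would define, for $u \in \overline{K}^*$ with $u \notin q^{\Z}$,
\[
X(u,q) \; = \; \sum_{n \in \Z} \frac{q^n u}{(1-q^n u)^2} - 2 s_1(q),
\qquad
Y(u,q) \; = \; \sum_{n \in \Z} \frac{(q^n u)^2}{(1-q^n u)^3} + s_1(q).
\]
The series over $n \ge 0$ converges because its terms are bounded in absolute value by $|q|^n \cdot (\text{something bounded in } u)$; for the $n<0$ part one substitutes $n=-m$ and uses the elementary identity
\[
\frac{q^{-m} u}{(1-q^{-m}u)^2} \; = \; \frac{q^m u^{-1}}{(1-q^m u^{-1})^2},
\]
reducing it to a series dominated by $|q|^m$. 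The next step is to check that $(X(u,q), Y(u,q))$ satisfies the Weierstrass equation defining $E_q$; this is a purely formal computation with $q$-expansions that is valid over $\Z[[q]]$ and then specialises to $K$. Setting $\psi(u) = (X(u,q),Y(u,q))$ and $\psi(1)=O$ gives a map $\overline{K}^* \to E_q(\overline{K})$.

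Periodicity $\psi(qu)=\psi(u)$ follows from reindexing $n \mapsto n+1$ in the two series, so $\psi$ factors through $\overline{K}^*/q^{\Z}$. Galois equivariance is then immediate: the defining series have coefficients in $\Z[[q]] \subset K$, so for any $\sigma \in G_K$ and $u \in \overline{K}^*$ we have $\psi(u^\sigma) = \psi(u)^\sigma$. This reduces the theorem to showing $\psi$ is an isomorphism of abstract groups.

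The main obstacle is verifying that $\psi$ is a group homomorphism; this is where I would invoke the standard theta-function machinery. Define
\[
\theta(u,q) \; = \; (1-u) \prod_{n \ge 1} (1-q^n u)(1 - q^n u^{-1}),
\]
whose product converges by the usual $|q|<1$ argument, and whose zero set in $\overline{K}^*$ is precisely $q^{\Z}$. The key transformation law $\theta(qu,q) = -u^{-1} \theta(u,q)$ lets one show that for $u_1, u_2 \in \overline{K}^*$ the function $\theta(u/u_1, q)\theta(u/u_2, q)\theta(u/(u_1 u_2),q)^{-1}\theta(u,q)^{-1}$ is $q$-invariant in $u$, hence descends to a rational function on $E_q$ after translation by $\psi$; computing its divisor shows that $\psi(u_1)+\psi(u_2)=\psi(u_1 u_2)$ in $E_q(\overline{K})$. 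Once $\psi$ is a homomorphism, injectivity reduces to showing $\ker\psi = q^{\Z}$, which follows because the only zeroes of $\theta(u,q)$ are at $u \in q^{\Z}$. For surjectivity, one uses the formula \eqref{eqn:jq} to check that $j(E_q)$ matches the $j$-invariant of any elliptic curve of the relevant type, and then observes that both sides are divisible abelian groups of the correct ``size'' at every level: given any $N$ not divisible by $\mathrm{char}(\overline{K})$, the $N$-torsion of $\overline{K}^*/q^{\Z}$ is generated by $\zeta_N$ and a chosen $N$-th root $q^{1/N}$, giving a subgroup isomorphic to $(\Z/N\Z)^2$ mapping into $E_q[N]$, and Hensel-lifting arguments force the map on $N$-torsion to be a bijection for all $N$, hence $\psi$ itself is surjective since $E_q(\overline{K})$ is the union of its torsion and translates thereof.
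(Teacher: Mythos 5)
The paper does not actually give a proof of this theorem; it simply cites Silverman's \emph{Advanced Topics in the Arithmetic of Elliptic Curves} (Chapter V) as the standard reference, and the whole force of the statement in the paper is to record the explicit $G_K$-equivariant description of $E_q[N]$ used in Corollary~\ref{cor:Tate}. Your sketch follows exactly the route taken there: the $q$-series definition of $X(u,q),Y(u,q)$, the reindexing trick for the $n<0$ tail, the formal verification of the Weierstrass equation over $\Z[[q]]$, periodicity under $u\mapsto qu$, Galois equivariance from the rationality of the coefficients, and the theta-function argument for the homomorphism property and for $\ker\psi=q^{\Z}$. All of that is sound and matches the cited source.

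The surjectivity step, however, has a genuine gap. First, the appeal to the $j$-invariant formula \eqref{eqn:jq} is not relevant here: matching $j$-invariants is what you use to show that every elliptic curve with non-integral $j$ is a twist of some Tate curve, not to show that a given $\psi$ is surjective. Second, and more seriously, the closing sentence ``$E_q(\overline{K})$ is the union of its torsion and translates thereof'' is false as stated --- $E_q(\overline{K})$ contains plenty of non-torsion points (e.g.\ points whose $x$-coordinate is transcendental over the prime field), and ``translates thereof'' is undefined. What your torsion argument actually proves (and it does so cleanly, with no Hensel lifting needed: injectivity plus a count of $N^2$ elements on each side) is that $\psi$ is a bijection on $N$-torsion for every $N$ coprime to the characteristic. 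To get surjectivity on all of $E_q(\overline{K})$ one should instead reduce to a finite extension $L/K$, use that the formal group of $E_q$ is isomorphic to $\widehat{\mathbb{G}}_m$ so that $\psi$ restricts to an isomorphism $1+\mathfrak{m}_L \xrightarrow{\sim} E_1(L)$, and then chase the filtration $E_1(L)\subset E_0(L)\subset E_q(L)$ against $1+\mathfrak{m}_L\subset \OO_L^* \subset L^*$ to see that $\psi\colon L^*/q^{\Z}\to E_q(L)$ is onto. That is the argument Silverman gives, and it is the one your sketch is missing.
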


\begin{cor}\label{cor:Tate}
Let $E=E_q$ be a Tate curve as above. Then
\[
\overline{\rho}_{E,N} \sim
\begin{pmatrix}
\chi_N & * \\
0 & 1 
\end{pmatrix} .
\]
\end{cor}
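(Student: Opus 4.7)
The plan is to use Tate's uniformisation theorem to transport the Galois action on $E_q[N]$ to a very explicit action on the $N$-torsion of $\overline{K}^*/q^{\Z}$, and then to read off the desired matrix shape by choosing a natural basis.

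First I would identify $E_q[N]$ with the $N$-torsion of the group $\overline{K}^*/q^{\Z}$ via the Galois-equivariant isomorphism $\phi$. An element $u \cdot q^{\Z}$ is killed by $N$ if and only if $u^N \in q^{\Z}$, i.e. $u^N = q^m$ for some $m \in \Z$. Choosing an $N$-th root $q^{1/N} \in \overline{K}$ once and for all, every such $u$ can be written $u = \zeta_N^i \cdot (q^{1/N})^j$ with $0 \le i,j < N$. Hence, as an abstract $\Z/N\Z$-module,
\[
E_q[N] \;\cong\; \langle \zeta_N \rangle \cdot \langle q^{1/N} \rangle / q^{\Z},
\]
with $\Z/N\Z$-basis $P_1 = \phi^{-1}(\zeta_N \cdot q^{\Z})$ and $P_2 = \phi^{-1}(q^{1/N} \cdot q^{\Z})$.

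Next I would compute the action of $\sigma \in G_K$ on this basis. Since $\phi$ is Galois-equivariant, it suffices to compute the action upstairs. By the definition of the cyclotomic character, $\sigma(\zeta_N) = \zeta_N^{\chi_N(\sigma)}$, and so $\sigma(P_1) = \chi_N(\sigma)\, P_1$. On the other hand, $\sigma(q^{1/N})$ is again an $N$-th root of $q$ (because $q \in K^*$ is fixed), so there exists $b_\sigma \in \Z/N\Z$ with $\sigma(q^{1/N}) = \zeta_N^{b_\sigma} \cdot q^{1/N}$; applying $\phi^{-1}$ gives $\sigma(P_2) = b_\sigma P_1 + P_2$. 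In the notation of the paper's matrix convention, this yields
\[
\overline{\rho}_{E,N}(\sigma) \;=\; \begin{pmatrix} \chi_N(\sigma) & b_\sigma \\ 0 & 1 \end{pmatrix},
\]
which is exactly the claimed shape.

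Most of the work is bookkeeping once Tate's theorem is invoked; the only mildly delicate point is that $b_\sigma$ need not be a homomorphism in $\sigma$ (the upper-right entry is a cocycle, not a character), but this is irrelevant for the statement, which only asserts the block-triangular form with diagonal characters $\chi_N$ and $1$. The existence of the Galois-stable line $\langle P_1 \rangle$ generated by the roots of unity inside $E_q[N]$, forced by Tate's uniformisation, is the essential mechanism and there is no genuine obstacle beyond this.
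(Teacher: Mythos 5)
Your proposal is correct and follows essentially the same route as the paper: transport the $G_K$-action through Tate's uniformisation, take the basis $\phi^{-1}(\zeta_N)$, $\phi^{-1}(q^{1/N})$ of $E_q[N]$, and read off the upper-triangular matrix with diagonal entries $\chi_N(\sigma)$ and $1$. Your remark that the upper-right entry is only a cocycle, not a character, is a correct and harmless observation that the paper leaves implicit.
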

\begin{proof}
Note that $\phi$ induces an isomorphism
\[
\phi \; : \; E[N] \rightarrow \left(\overline{K}^*/q^\Z \right) [N]
\]
that is compatible with the action of $G_K$. A basis for the
group on the right is $\zeta_N$, $q^{1/N}$. Let $\sigma \in G_K$.
Then
\[
\sigma(\zeta_N) = \zeta_N^{\chi_N(\sigma)}, \qquad
\sigma(q^{1/N})= \zeta_N^{a_\sigma} q^{1/N}
\]
for some $a_\sigma$. Let $P=\phi^{-1}(\zeta_N)$, $Q=\phi^{-1}(q^{1/N})$.
Then $P$, $Q$ is a basis for $E[N]$ and, as $\phi$ is compatible
with the $G_K$-action
\[
P^\sigma= \chi_N(\sigma) \cdot P,
\qquad
Q^\sigma = a_\sigma \cdot P+ Q.
\]
Hence, with respect to this basis,
\[
\overline{\rho}_{E,N}(\sigma)=\begin{pmatrix}
\chi_N(\sigma) & a_\sigma \\
0 & 1
\end{pmatrix} \, .
\]
\end{proof}

\begin{example}
Let $E/K$ have split multiplicative reduction at $\fq$. Let $G_\fq \subset G_K$
be the decomposition group at $\fq$; this is simply $G_{K_\fq}$. As $E$
is a Tate curve when considered over $K_\fq$, we see from the above
that
\[
\overline{\rho}_{E,N} \vert_{G_\fq} \sim 
\begin{pmatrix}
\chi_N & * \\
0 & 1
\end{pmatrix}.
\]
More generally, let $E/K$ have potentially multiplicative reduction
at $\fq$. Let $\psi : G_\fq \rightarrow \{\pm 1\}$ be the character
satisfying $\sigma(\sqrt{-c_4/c_6})=\psi(\sigma) \cdot \sqrt{-c_4/c_6}$.
Then
\[
\overline{\rho}_{E,N} \vert_{G_\fq} \sim 
\psi \cdot \begin{pmatrix}
\chi_N & * \\
0 & 1
\end{pmatrix}.
\]
\end{example}

We shall later need the following lemma.
\begin{lem}\label{lem:transvection}
Let $E$ be an elliptic curve over a number field $K$,
and write $j$ for the $j$-invariant of $E$.
Let $\fq$ be a prime ideal of $\OO_K$.
Let $p$ be a prime.
Suppose
\[
v_\fq(j) <0, \qquad p \nmid \ord_\fq(j).
\]
Then $p \mid \# \overline{\rho}_{E,p}(I_\fq)$.
\end{lem}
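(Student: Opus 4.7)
The plan is to reduce to the Tate curve setting, where $\overline{\rho}_{E,p}|_{G_\fq}$ takes the explicit upper-triangular form of Corollary~\ref{cor:Tate}, and then to extract an element of order $p$ in the image of inertia via Kummer theory.

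First, since $v_\fq(j)<0$, the curve $E$ has potentially multiplicative reduction at $\fq$. Passing to the completion $K_\fq$, the example following Corollary~\ref{cor:Tate} shows that $E$ is a quadratic twist of some Tate curve $E_q$ by a character $\psi\colon G_\fq\to\{\pm 1\}$. From the expansion~\eqref{eqn:jq} one reads off $v_\fq(j)=-\ord_\fq(q)$, so the hypothesis $p\nmid\ord_\fq(j)$ becomes $p\nmid\ord_\fq(q)$.

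Next, by Corollary~\ref{cor:Tate} applied over $K_\fq$, the representation $\overline{\rho}_{E_q,p}|_{G_\fq}$ factors through $\Gal(L/K_\fq)$ where $L:=K_\fq(\zeta_p,q^{1/p})$, so $\overline{\rho}_{E_q,p}(I_\fq)$ is exactly the inertia subgroup of $L/K_\fq$. To produce an element of order $p$ there, set $F=K_\fq(\zeta_p)$ and consider the tower $K_\fq\subseteq F\subseteq L$. Since $[F:K_\fq]\mid p-1$, the ramification index $e(F/K_\fq)$ is coprime to $p$, hence $p\nmid v_F(q)=e(F/K_\fq)\cdot v_\fq(q)$. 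It follows that $q\notin (F^*)^p$, so by Kummer theory $L/F$ is cyclic of degree $p$; moreover $v_L(q^{1/p})=e(L/F)\cdot v_F(q)/p$ must be an integer, forcing $p\mid e(L/F)$, and hence $L/F$ is totally ramified. Consequently $\Gal(L/F)\cong\Z/p\Z$ is contained in the inertia subgroup of $L/K_\fq$, and $p\mid\#\overline{\rho}_{E_q,p}(I_\fq)$.

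Finally, I transfer this divisibility to $E$ using Lemma~\ref{lem:twistrep}, which gives $\overline{\rho}_{E,p}(\sigma)=\psi(\sigma)\cdot\overline{\rho}_{E_q,p}(\sigma)$. For $p=2$ the character $\psi$ is trivial modulo~$2$, so the two images agree. For odd $p$, restricting to $J:=\ker\psi\cap I_\fq$, which has index at most $2$ in $I_\fq$, yields $\overline{\rho}_{E,p}(J)=\overline{\rho}_{E_q,p}(J)$; since $p$ is odd and coprime to this index, $p$-divisibility of the order survives, giving $p\mid\#\overline{\rho}_{E,p}(I_\fq)$. The main delicate point is the ramification analysis of $L/F$ in the case $\fq\mid p$, but this is handled uniformly by the coprimality $\gcd(e(F/K_\fq),p)=1$ arising from $[F:K_\fq]\mid p-1$.
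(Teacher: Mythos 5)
Your argument is correct, and its core is the same as the paper's: reduce to a Tate curve $E_q$ via a quadratic twist over $K_\fq$, convert $v_\fq(j)<0$ and $p\nmid v_\fq(j)$ into $p\nmid v_\fq(q)$ using \eqref{eqn:jq}, and extract $p$-divisibility of the image of inertia from ramification in the Kummer extension generated by $q^{1/p}$. Where you differ is in the execution of the endgame, and your version is actually the more robust one. The paper chooses a single $\sigma\in I_\fq$ with $\sigma(q^{1/p})=\zeta_p^a q^{1/p}$, $a\not\equiv 0\pmod p$, and asserts that $p$ divides the order of $\pm\bigl(\begin{smallmatrix}\chi_p(\sigma) & a\\ 0 & 1\end{smallmatrix}\bigr)$; that is only automatic when $\chi_p(\sigma)=1$ (if $\chi_p(\sigma)\neq 1$ the matrix is diagonalizable with distinct eigenvalues in $\F_p^*$ and its order is prime to $p$), which is immediate for $\fq\nmid p$ but requires a more careful choice of $\sigma$ when $\fq\mid p$ and $\chi_p$ ramifies. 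Your tower $K_\fq\subseteq F=K_\fq(\zeta_p)\subseteq L=F(q^{1/p})$ supplies exactly that care: since $e(F/K_\fq)\mid p-1$ you still have $p\nmid v_F(q)$, Kummer theory plus the valuation of $q^{1/p}$ forces $L/F$ to be cyclic of degree $p$ and totally ramified, and $\Gal(L/F)$ sits inside inertia and acts trivially on $\zeta_p$, so the argument is uniform in $\fq\mid p$ versus $\fq\nmid p$. Similarly, your treatment of the quadratic twist --- restricting to $\ker\psi\cap I_\fq$, of index at most $2$, and using that $p$ is odd (with $p=2$ handled separately since $\psi$ is trivial mod $2$) --- cleanly replaces the paper's sign bookkeeping on a single matrix. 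So: same strategy, but your write-up patches the one point where the paper's ``easy exercise'' is delicate.
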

\begin{proof}
Since $v_\fq(j)<0$, the elliptic curve $E$ 
has potentially multiplicative reduction at $\fq$.
Thus,
$E$ is a quadratic twist over $K_\fq$ of $E_q$ for some $q$. 
By \eqref{eqn:jq} we see that $p \nmid \ord_\fq(q)$.
Hence the extension $K_\fq(q^{1/p})/K_\fq$ is ramified.
Therefore, there is some $\sigma \in I_\fq$
such that $\sigma(q^{1/p})=\zeta_p^{a} q^{1/p}$, 
where $a \not \equiv 0 \pmod{p}$. Now, by the proof of 
Corollary~\ref{cor:Tate}, 
\[
\overline{\rho}_{E_q,p}(\sigma) = \begin{pmatrix}
\chi_p(\sigma) & a \\
0 & 1
\end{pmatrix}.
\]
As $E$ is a quadratic twist of $E_q$ over $K_\fq$,
we obtain
\[
\overline{\rho}_{E,p}(\sigma) = \pm \begin{pmatrix}
\chi_p(\sigma) & a \\
0 & 1
\end{pmatrix}.
\]
It is now an easy exercise, using the fact that $a \not\equiv 0 \pmod{p}$
the $p$ divides the order of this matrix.
Thus $p \mid \# \overline{\rho}_{E,p}(I_\fq)$.
\end{proof}
\section{Mod $p$ representations of elliptic curves}\label{sec:modp}
We now specialise to the case $N=p$, a prime. Then
$\overline{\rho}_{E,p}$ is called the mod $p$ representation
of $E$. We say that $\overline{\rho}_{E,p}$ is \textbf{reducible}
if there some $P \in E[p]$, $P \ne 0$, such that
$\sigma(P)=a_\sigma P$ for all $\sigma \in G_K$.
Note that $P$ does not have to be fixed by $G_K$, but
the cyclic subgroup of $\langle P \rangle$ is fixed by $G_K$.
Thus, by Theorem~\ref{thm:pisog}, the mod $p$ representation
$\overline{\rho}_{E,p}$ is reducible if and only if $E$
has a $p$-isogeny defined over $K$. Equivalently,
\[
\overline{\rho}_{E,p} \thicksim \begin{pmatrix} * & * \\ 0 & * \end{pmatrix}.
\]
We note that if $\overline{\rho}_{E,p}$ is reducible,
then its image is (up to conjugation) contained in the Borel
subgroup 
\begin{equation}\label{eqn:Borel}
B_0(p):=\left\{
\begin{pmatrix} a & b\\
0 & d 
\end{pmatrix} \in \GL_2(\F_p) \; : \; 
a,b,d \in \F_p
\right\}
\end{equation}
 of $\GL_2(\F_p)$.
If $\overline{\rho}_{E,p}$ is not reducible then we say it is
\textbf{irreducible}.

It is often useful to know Dickson's classification of
subgroups of $\GL_2(\F_p)$.
\begin{thm}[Dickson]
Let $H$ be a subgroup of $\GL_2(\F_p)$ not containing
$\SL_2(\F_p)$. Then (up to conjugation)
\begin{enumerate}
\item[(i)] either $\displaystyle H \subseteq B_0(p):=\left\{
\begin{pmatrix} * & *\\
0 & * 
\end{pmatrix}
\right\}$ (Borel subgroup)
\item[(ii)] or $\displaystyle H \subseteq N_s^+(p):=\left\{
\begin{pmatrix} \alpha & 0\\
0 & \beta 
\end{pmatrix} , \;
\begin{pmatrix} 0 & \alpha\\
\beta & 0
\end{pmatrix} \; : \; \alpha,~\beta \in \F_p^*
\right\}$ (normalizer of split Cartan)
\item[(iii)] or
$\displaystyle H \subseteq N_{ns}^+(p)$ (normalizer
of non-split Cartan)\footnote{
$N_{ns}^+(p)$ can be conjugated inside $\GL_2(\F_{p^2})$
to
\[
\left\{
\begin{pmatrix} \alpha & 0\\
0 & \alpha^p
\end{pmatrix} , \;
\begin{pmatrix} 0 & \alpha\\
\alpha^p & 0
\end{pmatrix} \; : \; \alpha\in {\F_{p^2}}^\ast
\right\}. 
\]
}
\item[(iv)] or the image of $H$ in $\PGL_2(\F_p)$
is isomorphic to $A_4$, $S_4$ or $A_5$ (these are
called the exceptional subgroups of $\GL_2(\F_p)$).
\end{enumerate}
\end{thm}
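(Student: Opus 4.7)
The plan is to pass to the projective quotient and invoke the classical Dickson classification of finite subgroups of $\PGL_2$.

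First I would replace $H$ by its image $\overline{H}$ in $\PGL_2(\F_p)=\GL_2(\F_p)/Z$, where $Z=\F_p^{\ast}\cdot I$ is the centre. The hypothesis $\SL_2(\F_p)\not\subseteq H$ translates to $\operatorname{PSL}_2(\F_p)\not\subseteq \overline{H}$: for $p\ge 5$ the group $\operatorname{PSL}_2(\F_p)$ is simple of index $2$ in $\PGL_2(\F_p)$, so the only subgroups of $\PGL_2(\F_p)$ containing it are itself and $\PGL_2(\F_p)$, both of which pull back to subgroups of $\GL_2(\F_p)$ containing $\SL_2(\F_p)$. The small primes $p=2,3$ are easily handled by direct enumeration.

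Second I would invoke the classical theorem of Dickson (see e.g.\ Suzuki, \emph{Group Theory II}, Chapter~3): every finite subgroup of $\PGL_2(\overline{\F_p})$ is conjugate to one of (a)~cyclic, (b)~dihedral, (c)~one of the exceptional groups $A_4$, $S_4$, $A_5$, or (d)~$\operatorname{PSL}_2(\F_q)$ or $\PGL_2(\F_q)$ for a subfield $\F_q\subseteq\overline{\F_p}$. Since $\overline{H}\subseteq \PGL_2(\F_p)$ and the prime field $\F_p$ has no proper subfield, option (d) contributes only $\operatorname{PSL}_2(\F_p)$ and $\PGL_2(\F_p)$, both excluded above.

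Third I would translate each surviving possibility back into a statement about $H\subseteq \GL_2(\F_p)$. If $\overline{H}$ is cyclic then $H$ is abelian (a central extension of a cyclic group by scalars), and examining the centraliser $C_{\GL_2(\F_p)}(h)$ of a non-scalar generator $h$ shows that $H$ sits inside either a Borel, a split Cartan, or a non-split Cartan, according as $h$ has a single repeated eigenvalue in $\F_p$, two distinct eigenvalues in $\F_p$, or a pair of conjugate eigenvalues in $\F_{p^2}\setminus \F_p$; these yield cases (i), (ii), (iii) respectively. If $\overline{H}$ is dihedral with cyclic index-$2$ subgroup $\overline{A}$, apply the previous paragraph to the preimage $A\subseteq H$ to find a Cartan $C$ with $A\subseteq C$; for any $h\in H\setminus A$ the Cartan $hCh^{-1}$ has the same projective image $\overline{A}$ as $C$, forcing $hCh^{-1}=C$, whence $H\subseteq N(C)$ and we land in (ii) or (iii). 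The exceptional cases are case (iv) by definition.

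The main obstacle is Step~2, the classification of finite subgroups of $\PGL_2(\overline{\F_p})$ itself. A self-contained proof proceeds by analysing the action of $\overline{H}$ on $\PP^1(\overline{\F_p})$: one separates the case where $|\overline{H}|$ is prime to $p$ (reducing via lifting to characteristic zero and invoking the classical classification of finite subgroups of $\PGL_2(\mathbb{C})$) from the case where $p\mid|\overline{H}|$ (in which the non-trivial elements of the $p$-Sylow are necessarily transvections, and an irreducibility argument produces a conjugate of $\operatorname{PSL}_2(\F_q)$). This is standard but is genuinely the heart of the theorem; once it is granted, the matching in Step~3 is routine.
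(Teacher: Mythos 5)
The paper states this result as a classical theorem of Dickson and gives no proof, so the only question is whether your argument is sound; the overall architecture (project to $\PGL_2(\F_p)$, invoke the classification of its finite subgroups, pull back) is indeed the standard route. The gap is in Step 2: your quoted version of Dickson's classification over $\overline{\F}_p$ is missing a family. In characteristic $p$ the list is not just cyclic, dihedral, exceptional, and $\operatorname{PSL}_2(\F_q)/\PGL_2(\F_q)$; it also contains the \emph{Borel-type} (p-semi-elementary) subgroups, i.e.\ semidirect products of a nontrivial elementary abelian $p$-group with a cyclic group of order prime to $p$. Inside $\PGL_2(\F_p)$ these are exactly the subgroups of order divisible by $p$ that do not contain $\operatorname{PSL}_2(\F_p)$ --- for instance the image of the full Borel $B_0(p)$, of order $p(p-1)$, which is neither cyclic, nor dihedral (once the torus part has order $>2$), nor isomorphic to $A_4,S_4,A_5$, nor to $\operatorname{PSL}_2(\F_q)$ or $\PGL_2(\F_q)$. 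Taken at face value, your Step 2 would force such a subgroup to contain $\operatorname{PSL}_2(\F_p)$, which is absurd, and your Step 3 has no branch that produces conclusion (i) when $p\mid\#\overline{H}$. The closing remark that $p\mid\#\overline{H}$ ``produces a conjugate of $\operatorname{PSL}_2(\F_q)$'' is only correct when $\overline{H}$ has no fixed point on $\PP^1$ (equivalently, its $p$-Sylow is not normal); otherwise $\overline{H}$, and hence $H$, lies in a Borel, and this is precisely how case (i) with unipotent elements arises. The same oversight surfaces in your dihedral step: you assume the preimage $A$ of the cyclic index-$2$ subgroup lies in a Cartan, but for the dihedral group of order $2p$ the generator is unipotent modulo scalars, no Cartan contains it, and the correct conclusion (containment in a Borel, via the unique fixed point of the normal $p$-Sylow) is not reached by your Cartan-rigidity argument.

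The repair is standard: either quote Dickson's theorem in full (including the $p$-semi-elementary groups, which pull back into a Borel and give case (i)), or argue directly inside $\PGL_2(\F_p)$ in the style of Serre's Proposition 16: if $p\mid \#\overline{H}$, then either the $p$-Sylow of $\overline{H}$ is normal, in which case $\overline{H}$ fixes the unique point of $\PP^1(\F_p)$ fixed by its unipotents and $H$ lies in a Borel (case (i)), or it is not, and a counting/generation argument shows $\overline{H}\supseteq \operatorname{PSL}_2(\F_p)$, contrary to hypothesis (your perfectness argument correctly upgrades this to $\SL_2(\F_p)\subseteq H$). If $p\nmid\#\overline{H}$, your lifting-to-characteristic-zero reduction gives cyclic, dihedral or exceptional, and then your Step 3 translation (centraliser of a regular element is the unique Cartan containing it; a dihedral overgroup normalises that Cartan) goes through as written.
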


\begin{exercise}
Suppose $E$ has potentially multiplicative reduction at $\fq$.
Show for $p$ sufficiently large
that $\overline{\rho}_{E,p}(G_\fq)$ is not exceptional.
(Hint: use Lemma~\ref{lem:transvection}).
\end{exercise}

\section{Irreducibility of mod $p$ representations of elliptic curves}
We start with what is believed
to be correct. 
The following conjecture is widely known as Serre's uniformity 
conjecture over number fields (e.g. \cite[Conjecture 1.3]{BELOV}).
\begin{conj}
Let $K$ be a number field. Then there is
some constant $C_K$ such that the following
holds.
Let $E/K$
be an elliptic curve without CM,
and let $p>C_K$ be a prime. 
Then $\overline{\rho}_{E,p}$ is surjective.
\end{conj}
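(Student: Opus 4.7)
The plan is to prove surjectivity of $\overline{\rho}_{E,p}$ via Dickson's classification. Since $\det \overline{\rho}_{E,p} = \chi_p$ is surjective onto $\F_p^\ast$ for $p$ unramified in $K$ (combining Theorems~\ref{thm:detrep} and \ref{thm:cyclotomiccharacter}(ii)), it suffices to show $\SL_2(\F_p) \subseteq \overline{\rho}_{E,p}(G_K)$. By Dickson's theorem, failure of this inclusion forces the image (up to conjugation) into one of four types: a Borel subgroup $B_0(p)$; the normalizer $N_s^+(p)$ of a split Cartan; the normalizer $N_{ns}^+(p)$ of a non-split Cartan; or a subgroup whose image in $\PGL_2(\F_p)$ is $A_4$, $S_4$, or $A_5$. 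I would treat these four cases separately, aiming in each to show occurrence for only finitely many $p$, with bounds depending only on $K$.

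The exceptional case is the easiest. If $E$ has a prime $\fq$ of potentially multiplicative reduction, then Lemma~\ref{lem:transvection} gives $p \mid \#\overline{\rho}_{E,p}(I_\fq)$, forcing a projective image of order at least $p$, which contradicts $\lvert A_5 \rvert = 60$ once $p > 60$. If $E$ has everywhere potentially good reduction, one uses Serre's analysis of the wild and tame inertial image on $E[p]$ (the finite list of possibilities for the fundamental characters) to eliminate exceptional projective images for $p$ larger than a bound depending on the bad primes of $E$, and then reduces the residual semistable-bad-reduction-set problem to finiteness via Shafarevich/Faltings. The split and non-split Cartan normalizer cases translate, by the modular interpretation of $N_s^+(p)$ and $N_{ns}^+(p)$, into $j(E)$ being the $j$-invariant of a non-CM, non-cuspidal $K$-point of $X_s^+(p)$ or $X_{ns}^+(p)$. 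For $K = \Q$ these are handled for sufficiently large $p$ by the Bilu--Parent--Rebolledo theorem via a Runge-type argument on the Jacobian; over a general totally real $K$ one would hope to adapt these arguments using modularity of $E$ (hence of the relevant Jacobian factors) combined with Mordell--Weil sieve techniques, although the analytic ingredients require non-trivial adaptation.

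The main obstacle --- and the reason the statement remains a conjecture --- is the Borel case. Here $\overline{\rho}_{E,p}(G_K) \subseteq B_0(p)$ is equivalent, by Theorem~\ref{thm:pisog}, to $E$ admitting a cyclic $K$-rational $p$-isogeny, i.e.\ to $j(E)$ being the $j$-invariant of a non-cuspidal $K$-point of $X_0(p)$. Over $\Q$, Mazur's isogeny theorem yields $X_0(p)(\Q) = \{\text{cusps}\}$ for $p \notin \{2,3,5,7,11,13,17,19,37\}$, which closes out the Borel case at once. Over a general number field no such theorem is available: one expects $X_0(p)(K)$ to consist only of cusps and CM points for $p$ larger than some $C_K$, but proving this --- a uniform isogeny theorem over $K$ --- is beyond current techniques. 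Substantial partial results exist (Momose, Merel's uniform boundedness of torsion, Bruin--Najman and Derickx--Kraus--Najman over fields of small degree), and one can hope to combine these with gonality bounds for $X_0(p)$ over $K$, but the fully uniform statement over arbitrary $K$ is precisely where any honest plan of attack must stop.
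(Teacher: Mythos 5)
The statement you have attempted is not a theorem of the paper: it is Serre's uniformity conjecture over number fields, presented explicitly as a conjecture, and the paper immediately remarks that it is unresolved even for $K=\Q$. There is therefore no proof in the paper to compare your proposal against, and your proposal --- rightly --- does not claim to supply one: your closing admission that the uniform isogeny/Borel problem over a general $K$ is "precisely where any honest plan of attack must stop" is the correct assessment, and your reduction via Dickson's classification into Borel, split and non-split Cartan normalizer, and exceptional cases is indeed the standard framework in which all known partial progress is organised.

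Two corrections to the sketch itself. First, over $\Q$ the Bilu--Parent(--Rebolledo) results dispose of the \emph{split} Cartan normalizer case only; the non-split Cartan normalizer case remains open even over $\Q$, which is exactly why the paper states the conjecture is unresolved for $K=\Q$. So the Borel case is not the sole outstanding obstruction, and your phrasing overstates what is known for $N_{ns}^+(p)$. Second, your elimination of exceptional images at a prime $\fq$ of potentially multiplicative reduction via Lemma~\ref{lem:transvection} needs $p\nmid v_\fq(j)$, so the bound it produces depends on $E$ (through $v_\fq(j)$) and not only on $K$; a uniform treatment of the exceptional case instead uses the inertial (fundamental) characters at primes above $p$, which you invoke only in the everywhere-potentially-good situation. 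Your discussion of the Borel case is accurate: no analogue of Mazur's isogeny theorem exists over general number fields, and it is precisely this absence that the paper uses to motivate the irreducibility results (Kraus, Freitas--Siksek, Merel, and the modular-curve computations) it surveys as substitutes for a uniform statement.
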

Note that if $\overline{\rho}_{E,p}$ is surjective then
it certainly irreducible. However, even for $K=\Q$,
Serre's uniformity conjecture is still unresolved.
For recent progress see \cite{BiluParent}, \cite{FournLemos}, \cite{Lemos}.


Mazur's isogeny theorem \cite{Mazur78} asserts that if $E$
is an elliptic curve over $\Q$, and $p>163$ is a prime,
then $E$ does not have $p$-isogenies defined over $\Q$.
This immediately implies that $\overline{\rho}_{E,p}$
is irreducible. Unfortunately, this is no analogue for 
Mazur's isogeny theorem over number fields. However,
in order to apply the level-lowering theorem
(Theorem~\ref{thm:levellowering}) we need to show that
$\overline{\rho}_{E,p}$ is irreducible
for elliptic curves $E$ defined over a totally real field $K$.

We recall the following result of 
Kraus \cite[Corollary 1]{Kraus96} 
concerning semistable elliptic curves 
over quadratic fields.

\begin{thm}[Kraus]
	\label{Kraus:quadratic}
Let $K$ be a quadratic 
field of class number $1$.
Let $p\geq 5$ be a prime.
Suppose $E$ is semistable.
Then if $\modpg$ is reducible 
then $p\leq 13$ or $p\mid D_{K}\cdot N_{K}(\varepsilon^2-1)$, 
where $D_{K}$ is the discriminant of $K$, $N_{K}$ denotes the 
absolute norm of $K$, and $\varepsilon$ is the fundamental unit of $K$.
\end{thm}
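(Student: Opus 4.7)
The plan is to study the isogeny character arising from the reducibility and then exploit the class number one hypothesis via class field theory. By Theorem~\ref{thm:pisog}, the reducibility of $\overline{\rho}_{E,p}$ produces characters $\phi,\psi : G_K \to \F_p^\times$ with $\phi\psi = \chi_p$, where $\phi$ acts on the kernel of a $K$-rational $p$-isogeny from $E$.

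The first task is to pin down the ramification of $\phi$. At a prime $\fq \nmid p$ of semistable reduction (good or multiplicative), Corollary~\ref{cor:Tate} combined with the fact that $\chi_p|_{I_\fq}$ is trivial forces the diagonal of $\overline{\rho}_{E,p}|_{I_\fq}$ to be trivial, so $\phi$ is unramified at $\fq$; hence $\phi$ is unramified outside primes above $p$. At a prime $\fp \mid p$, the hypothesis $p\geq 5$ yields $e(\fp/p) \leq 2 < p-1$, so inertia is tame; in particular good supersingular reduction is excluded (its tame inertia action is irreducible via fundamental characters of level $2$), leaving good ordinary or multiplicative, and in both cases $\phi|_{I_\fp} = \chi_p^{a_\fp}|_{I_\fp}$ for some $a_\fp \in \{0,1\}$.

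Next I invoke class field theory, crucially including the archimedean places. Because $\Cl(K) = 1$, a continuous character $G_K \to \F_p^\times$ unramified outside $\{\fp \mid p\} \cup \{\infty\}$ corresponds, via the Artin map, to a character of $\prod_{\fp \mid p} \OO_{K_\fp}^\times \times \prod_{v \mid \infty} K_v^\times$ trivial on the diagonal image of $\OO_K^\times$. At $\fp \mid p$ the local component on units is $\phi_\fp(u) \equiv \overline{u}^{-a_\fp (p^{f_\fp} - 1)/(p-1)} \pmod{\fp}$, where $f_\fp$ is the residue degree; at each real place $\infty_i$ the component is either trivial or the sign character, and the oddness relation $\phi(\tau_i)\psi(\tau_i) = \chi_p(\tau_i) = -1$ forces exactly one of $\phi,\psi$ to be odd at each complex conjugation $\tau_i$. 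The compatibility condition $\prod_v \phi_v(u) = 1$ for $u \in \OO_K^\times = \langle -1, \varepsilon \rangle$ then constrains the allowed tuples $(a_\fp)$ together with the archimedean signs.

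The argument concludes by casework on the splitting of $p$ in $K$. If $p$ ramifies, then $p \mid D_K$ and we are done. Otherwise, analysing the compatibility at $-1$ and at $\varepsilon$ leads to two scenarios: either (i) one of $\phi,\psi$ is forced to be trivial, so $E$ or its $p$-isogenous partner has a $K$-rational point of order $p$ and Kamienny's theorem on prime-order torsion over quadratic fields yields $p \leq 13$; or (ii) the $\varepsilon$-relation combined with the sign corrections from the archimedean components forces $\overline{\varepsilon} \equiv \pm 1 \pmod{\fp}$ for some $\fp \mid p$, whence $\fp \mid \varepsilon^2 - 1$ and thus $p \mid N_{K/\Q}(\varepsilon^2 - 1)$. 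The appearance of $\varepsilon^2 - 1$ rather than $\varepsilon - 1$ is precisely due to the sign ambiguity allowed by the archimedean factors. The main obstacle is the careful bookkeeping of the archimedean contributions and of the local Artin conventions (in particular the exponent $(p^{f_\fp}-1)/(p-1)$ and the signs coming from $-1$), together with the appeal to Kamienny's theorem to handle the trivial-character cases; for imaginary quadratic $K$ of class number one the unit group is finite and the argument simplifies, with $\varepsilon$ interpreted as a generator of the torsion unit group.
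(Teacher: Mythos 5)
The paper itself gives no proof of this theorem (it is quoted from Kraus \cite{Kraus96}), and your sketch follows essentially the same route as Kraus's original argument: extract the isogeny character $\phi$ with $\phi\psi=\chi_p$, use semistability to show it is unramified away from $p$ and equal to $\chi_p^{a_\fp}$ on inertia at $\fp\mid p$, then exploit class number one via class field theory by evaluating on the fundamental unit (whence $\varepsilon^2-1$, the square absorbing the archimedean signs), with the remaining cases reduced to a point of order $p$ over a quadratic field and Kamienny's bound $p\leq 13$. The one point to tidy is your case (i): a character unramified at all finite places need not be trivial, since the narrow class group of a real quadratic field of class number one can have order $2$; in that event $\phi$ (or $\psi$) is a quadratic character and one must pass to the corresponding quadratic twist of $E$ or of the $p$-isogenous curve before invoking the degree-$2$ torsion bound, which still yields $p\leq 13$ and so does not affect the conclusion.
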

\begin{example}\label{ex:Kraus}
The application of Theorem \ref{Kraus:quadratic} 
was a key step in the resolution of the Fermat 
equation over $K=\Q(\sqrt{2})$ by Jarvis and Meekin \cite{Frazer04}.
In that case, the Frey curve $E$ is semistable. A fundamental
unit for $K$ is $\varepsilon=1+\sqrt{2}$.
Then $D_K=8$ and $N_K(\varepsilon^2-1)=-4$.
Thus for $p \ge 17$, Theorem \ref{Kraus:quadratic}
tells us the mod $p$ representation $\overline{\rho}_{E,p}$
is irreducible. We consider irreducibility
for smaller values of $p$ later.
\end{example}
Kraus \cite[Theorem 1]{Kraus07} generalized
Theorem~\ref{Kraus:quadratic}
to more general numbers fields.
Later work of Freitas and Siksek \cite{Freitas15} 
asserts a more general result by 
building upon work of David \cite{David12} 
and Merel's uniform boundedness theorem \cite{Merel96}.
\begin{thm}[Merel] \label{thm:Merel}
Let $K$ be a number field of degree $d$. 
Let $E$ be a an elliptic curve over $K$. 
If $E$ has a point of prime order $p$ defined 
over $K$ then 
\[
		p<(1+3^{\frac{d}{2}})^2.
\]	
\end{thm}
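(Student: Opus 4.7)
The plan is to follow the strategy pioneered by Mazur and Kamienny and completed by Merel, based on the arithmetic of modular curves and the winding quotient of $J_1(p)$. A point of prime order $p$ on $E/K$ yields a non-cuspidal $K$-point of the affine modular curve $Y_1(p)$, hence a closed point $x$ of $Y_1(p)_\Q$ of residue degree at most $d$. Such a point determines an effective divisor of degree $d$ on $X_1(p)$ defined over $\Q$, i.e.\ a $\Q$-rational point of the $d$-th symmetric power $X_1(p)^{(d)}$; subtracting the cuspidal divisor $d \cdot [\infty]$ and applying the Abel--Jacobi map produces a $\Q$-rational point of the Jacobian $J_1(p)$.

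The first step is to pass to a quotient with finite Mordell--Weil group over $\Q$. One introduces Merel's \emph{winding quotient} $J_e$ of $J_1(p)$, namely the largest abelian variety quotient on which the winding element of the Hecke algebra acts as zero. By construction, the dual of $J_e$ is isogenous to the product of those modular abelian varieties $A_f$ attached to weight $2$ newforms $f$ of level $\Gamma_1(p)$ with $L(f,1)\neq 0$. Kolyvagin--Logachev's theorem then yields that each $A_f(\Q)$ has Mordell--Weil rank zero, so $J_e(\Q)$ is finite.

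The second and most delicate step is the formal immersion argument. Consider the composition $\phi \colon X_1(p)^{(d)} \to J_1(p) \to J_e$, and reduce modulo an auxiliary prime $\ell \neq p$ of good reduction. Kamienny's criterion asserts that if $\phi$ is a formal immersion at the divisor $d\cdot [\infty]$ on $X_1(p)^{(d)}_{\F_\ell}$, then any $\Q$-rational point of $X_1(p)^{(d)}$ whose image in $J_e(\Q)$ reduces modulo $\ell$ to the image of the cusp must already coincide with $d\cdot [\infty]$ in a formal neighbourhood, contradicting the non-cuspidality of $x$. The formal immersion condition translates into the non-vanishing of a $d \times d$ determinant of Hecke-eigenvalue data on the cotangent space of $J_e$, and Merel's central theorem establishes precisely this linear independence once $p$ is large compared to $\ell$ and $d$. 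Oesterl\'e's optimisation, taking $\ell=2$ or $\ell=3$ (adjusted for parity of $d$), yields the explicit bound $p<(1+3^{d/2})^2$. The main obstacle is this formal immersion step: it demands $d$ cusp forms on $J_e$ whose $q$-expansion coefficients at $\ell$ are linearly independent in the required sense, and controlling this via Eisenstein symbols and the Hecke action is the technical heart of Merel's paper.
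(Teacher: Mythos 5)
The paper itself offers no proof of this statement: it is quoted directly from Merel's work (with Oesterl\'e's sharpening of the bound to $(1+3^{d/2})^2$), so there is no in-paper argument to measure yours against. Your outline is the standard Mazur--Kamienny--Merel strategy and is accurate at the level of a sketch: a point of order $p$ gives a degree-$d$ point on $X_1(p)$, hence a $\Q$-point of the $d$-th symmetric power; one maps to the winding quotient, whose Mordell--Weil group over $\Q$ is finite by Kolyvagin--Logachev; and one concludes via Kamienny's formal immersion criterion, whose hypothesis is supplied by Merel's Hecke-independence theorem and Oesterl\'e's refinement at $\ell=3$.

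Two caveats. First, your argument applies the criterion to ``any $\Q$-rational point of $X_1(p)^{(d)}$ whose image in $J_e(\Q)$ reduces modulo $\ell$ to the image of the cusp'', but never establishes that the point coming from $(E,P)$ has this property --- and this is exactly where the quantity $(1+3^{d/2})^2$ enters the proof. If $p>(1+3^{d/2})^2$, then for every prime $\lambda\mid 3$ of $K$ the Hasse--Weil bound $\#E(\F_\lambda)\le(1+\Norm(\lambda)^{1/2})^2\le(1+3^{d/2})^2<p$ forbids the order-$p$ point from injecting into the special fibre of a good-reduction model, and an analysis of the bad-reduction cases shows each Galois conjugate of the point of $X_1(p)$ specialises to a cusp modulo $\lambda$; only then does finiteness of $J_e(\Q)$ (together with injectivity of prime-to-$\ell$ torsion under reduction) force the class of the divisor to vanish, so that the formal immersion yields the contradiction. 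Omitting this specialisation step leaves the bound unexplained and the criterion with nothing to bite on. Second, the classical argument (Merel, Oesterl\'e, and the account in Derickx--Kamienny--Stein--Stoll) pushes the point down to $X_0(p)$ and uses the winding quotient of $J_0(p)$, so that Kolyvagin--Logachev applies as stated; working with $J_1(p)$ as you do requires the corresponding analytic-rank-zero input for newforms with nontrivial nebentypus, which you should either justify or avoid. Finally, the two deepest ingredients --- Merel's linear-independence theorem and Oesterl\'e's optimisation --- are cited rather than proved, which is unavoidable in a sketch but means your proposal is an outline of the known proof rather than a self-contained argument.
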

\begin{thm}[Freitas and Siksek]
Let $K$ be a totally real field. 
There is an effectively computable
constant $C_K$, depending only on $K$
such that the following holds.
Let $E$
be an elliptic curve over $K$.
Let $p>C_K$ be a prime, and suppose
$E$ is semistable at all the primes of $K$ above $p$.
Then $\modpg$ is irreducible. 
\end{thm}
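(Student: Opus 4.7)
The proof is by contradiction. Suppose $\overline{\rho}_{E,p}$ is reducible; by Theorem~\ref{thm:pisog} we may write, after choice of basis,
\[
\overline{\rho}_{E,p} \sim \begin{pmatrix} \phi & \ast \\ 0 & \psi \end{pmatrix},
\]
where $\phi, \psi : G_K \to \F_p^\ast$ are characters with $\phi \psi = \chi_p$. The strategy is to extract enough constraints on $\phi$ that $p$ is forced into a finite set depending only on $K$.

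First I would control $\phi$ locally at each prime $\mathfrak{p} \mid p$. Since $E$ is semistable at $\mathfrak{p}$, two cases arise. If $E$ has multiplicative reduction at $\mathfrak{p}$, then Corollary~\ref{cor:Tate} gives that $\phi|_{I_\mathfrak{p}}$ is either trivial or $\chi_p|_{I_\mathfrak{p}}$, up to a quadratic character that disappears after raising to a 12th power. If $E$ has good reduction at $\mathfrak{p}$, then once $p$ is large enough that $e(\mathfrak{p}/p) < p-1$, Raynaud's theorem on finite flat group schemes of type $(p,p)$ forces $\phi^{12}|_{I_\mathfrak{p}}$ to be trivial or $\chi_p^{12}|_{I_\mathfrak{p}}$. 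At primes $\mathfrak{q} \nmid p$, the character $\phi$ is unramified wherever $E$ has good reduction, and a direct Tate-uniformization argument in the spirit of Lemma~\ref{lem:transvection} shows $\phi^{12}$ is unramified at multiplicative primes as well. Combining these, for some choice of exponents $a_\mathfrak{p} \in \{0,1\}$ attached to the primes above $p$, the character $\eta := \phi^{12} \cdot \prod_{\mathfrak{p} \mid p} \chi_p^{-12 a_\mathfrak{p}}$ is everywhere unramified; by class field theory it factors through $\Cl(K)$, so $\eta^{h_K} = 1$. Thus $\phi^{12 h_K}$ agrees with an explicit power of $\chi_p$, modulo a finite enumeration of the vectors $(a_\mathfrak{p})$.

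Next I would pick an auxiliary prime $\mathfrak{q}$ of $K$, with $\mathfrak{q} \nmid p$ and $E$ of good reduction at $\mathfrak{q}$. Evaluating traces and determinants gives
\[
\phi(\sigma_\mathfrak{q}) + \psi(\sigma_\mathfrak{q}) \equiv a_\mathfrak{q}(E), \qquad \phi(\sigma_\mathfrak{q}) \psi(\sigma_\mathfrak{q}) \equiv \Norm_{K/\Q}(\mathfrak{q}) \pmod{p},
\]
so that $\phi(\sigma_\mathfrak{q})$ is a root modulo $p$ of $T^2 - a_\mathfrak{q}(E) T + \Norm(\mathfrak{q})$. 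Choosing $\mathfrak{q}$ trivial in $\Cl(K)$ with $\Norm(\mathfrak{q})$ effectively bounded via Chebotarev, and substituting the determination of $\phi^{12 h_K}$ from the previous paragraph, one arrives at a congruence $p \mid B_K(\mathfrak{q})$ where $B_K(\mathfrak{q}) \in \Z$ depends only on $K$ and $\mathfrak{q}$. The Hasse--Weil bound $|a_\mathfrak{q}(E)| \le 2 \sqrt{\Norm(\mathfrak{q})}$, together with a careful selection of $\mathfrak{q}$, ensures that $B_K(\mathfrak{q}) \ne 0$, yielding $p \le C_K$.

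The main obstacle is the case where $E$ has good reduction at some $\mathfrak{p} \mid p$: here controlling $\phi|_{I_\mathfrak{p}}$ requires the full strength of Raynaud's classification, and the constraint $e(\mathfrak{p}/p) < p-1$ is precisely what forces $C_K$ to depend on the discriminant and ramification data of $K$. A secondary issue is the CM case: if $E$ has complex multiplication, $\phi$ may coincide with the CM character itself, and one must separately invoke a theorem of David bounding the degree of the field of definition of the CM isogeny, together with Merel's uniform torsion bound (Theorem~\ref{thm:Merel}), to reach a contradiction.
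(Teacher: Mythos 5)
The survey itself states this theorem without proof (it is quoted from Freitas--Siksek, who build on David's estimates and Merel's theorem), so your sketch can only be judged against the known argument; your general strategy (isogeny character, twelfth powers, class field theory) is the right family of ideas, but two steps fail as written. First, the globalization step is incorrect: $\chi_p$ is a single global character, ramified at \emph{every} prime $\fp \mid p$, so the object $\eta = \phi^{12}\cdot\prod_{\fp \mid p}\chi_p^{-12a_\fp}$ does not ``remove ramification prime by prime''. If, say, $\phi^{12}\vert_{I_{\fp_1}}$ is trivial while $\phi^{12}\vert_{I_{\fp_2}}=\chi_p^{12}\vert_{I_{\fp_2}}$, then no power of $\chi_p$ twists $\phi^{12}$ into an everywhere-unramified character: twisting to fix $\fp_2$ ruins $\fp_1$. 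Your construction only works when all the local exponents $a_\fp$ coincide, and the mixed-signature case is precisely the hard core of the theorem. Handling it requires evaluating the character on global units via class field theory (this is where quantities like $N_K(\varepsilon^2-1)$ in Kraus's quadratic criterion, Theorem~\ref{Kraus:quadratic}, and David's bounds enter), and it accounts for much of the dependence of $C_K$ on $K$; as it stands your argument gives no bound on $p$ in that case. A smaller omission of the same flavour: you never treat primes $\fq\nmid p$ of additive reduction (semistability is only assumed above $p$); the twelfth power does dispose of them, since the abelianization of $\overline{\rho}_{E,p}(I_\fq)$ has exponent dividing $12$, but this should be said.

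Second, the elimination step via an auxiliary prime $\fq$ requires $E$ to have good reduction at $\fq$, and you cannot arrange this with $\fq$ depending only on $K$: the bad primes of the unknown curve $E$ are unbounded, and if $E$ has multiplicative reduction at your Chebotarev-chosen $\fq$ then $\phi(\sigma_\fq)$ may be $\pm 1$, in which case the relevant factor $\alpha^{12h_K}-1$ vanishes and $B_K(\fq)$ yields no bound — so your constant would depend on $E$, not only on $K$. The standard way to finish in the case where $\phi^{12}$ (or $\psi^{12}$) is everywhere unramified is different: since it factors through $\Cl(K)$, the character $\phi$ has order dividing $12h_K$, so the $G_K$-stable point $P$ of order $p$ (or its image on the isogenous curve $E/\langle P\rangle$ in the dual case) becomes rational over the fixed field of $\ker\phi$, an extension of $K$ of degree at most $12h_K$; Merel's uniform boundedness theorem (Theorem~\ref{thm:Merel}), applied in degree $12h_K[K:\Q]$, then bounds $p$ effectively in terms of $K$ alone — which is exactly why the survey cites Merel alongside David before stating the theorem. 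Finally, the CM detour in your last paragraph is unnecessary: the theorem makes no CM hypothesis and the proof needs no case split there.
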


\section{Relationship to $X_{0}(N)$}
It is often necessary to prove the irreducibility of $\modpg$ for small primes $p$ 
separately. 
In these cases it is convenient to exploit the relationship to certain modular
curves.  We outline this relationship below.

Write 
\[
\mathbb{H}=\{x+iy\; : \; x,\; y\in \R, \; y >0\}
\]
for the \textbf{upper-half plane} 
and let $\mathbb{H}^{\ast}=\mathbb{H}\cup \Q \cup \{\infty\} $ denote 
the \textbf{extended upper-half plane}. Recall that $\SL_2(\Z)$
acts on $\mathbb{H}$ via fractional linear transformations
\[
\begin{pmatrix}
a & b\\
c & d
\end{pmatrix} \cdot \tau =\frac{a \tau+b}{c \tau+d}.
\]
For an integer $N\geq 1$, let
\[
\Gamma_{0}(N)=\left\{
\begin{pmatrix}
a & b\\
c & d
\end{pmatrix}
\;
\in \; \SL_{2}(\Z) \;:\; c\equiv 0  \pmod{N} \right\}.
\]
Write $\Gamma_0(N) \backslash \mathbb{H}$ for the quotient
of $\mathbb{H}$ by the group $\Gamma_0(N)$. This quotient
is a non-compact Riemann surface, which turns out
to be isomorphic to the set of complex points $Y_0(N)(\mathbb{C})$
on a (non-compact) algebraic curve $Y_0(N)$ defined over $\Q$.
Let $E_{1}$, $E_{2}$ be elliptic curves defined over $\mathbb{C}$
and let $C_{1}$, $C_{2}$ be cyclic subgroups of order $N$ on $E_{1}$ and $E_{2}$, 
respectively. 
We say the pairs $(E_{1}, C_{1})$ and $(E_{2}, C_{2})$ 
are \textbf{isomorphic} if there is an isomorphism $\phi: E_{1}\rightarrow E_{2}$ 
such that $\phi(C_{1})=C_{2}$.
There is a one-to-one correspondence
\[
Y_0(N)(\mathbb{C}) \cong \Gamma_{0}(N) \backslash \mathbb{H}\leftrightarrow 
\{\text{isomorphism classes of pairs}\; (E/\mathbb{C},\; C)  \}
\]
where $E$ is an elliptic curve over $\mathbb{C}$
and $C$ is a cyclic subgroup of order $N$ on $E$. 
We let $X_0(N)$ be the compactification of $Y_0(N)$.
Then,
\[
X_{0}(N)(\mathbb{C})\cong \Gamma_{0}(N) \backslash\mathbb{H}^{\ast}.
\]
We call $X_{0}(N)(\mathbb{C})\setminus Y_{0}(N)(\mathbb{C})$ the \textbf{set of cusps} of $X_{0}(N)$. 
For a more detailed construction of the modular curve $X_{0}(N)$ we refer 
the reader to \cite{Diamond} or \cite{modcurves}.

\medskip

Let $p$ be a prime, $E/K$ an elliptic curve,
and suppose $\modpg$ is reducible. As explained
in Section~\ref{sec:modp}, there is a subgroup $C=\langle P \rangle$
of $E(\overline{K})$ of order $p$ 
that is $G_{K}$-stable. 
This gives us a non-cuspidal point $(E,C)$ on $X_0(p)$. In fact,
as $E$ and $C$ are both fixed by $G_K$,
the non-cuspidal point we obtain belongs to $X_0(p)(K)$.
In particular, to demonstrate the irreducibility of $\modpg$, 
it suffices to show that $X_{0}(p)(K)$ consists only of cuspidal points. 
Moreover, if $E$ has a point of order $2$ defined over $K$ (or full $2$-torsion over $K$) 
then it suffices to 
show that $X_{0}(2p)(K)$ (or $X_{0}(4p)(K)$) 
respectively consist only of cuspidal points.

\begin{example}
Let us return to the Frey curve $E$ associated to
the Fermat equation over $K=\Q(\sqrt{2})$.
In Example~\ref{ex:Kraus} we used Kraus' theorem
to show that $\overline{\rho}_{E,p}$ 
is irreducible for $p \ge 17$. 
Let us consider $p=5$. Note that the Frey curve $E$
has full $2$-torsion. Thus, if $\overline{\rho}_{E,5}$ 
is reducible then we obtain a non-cuspidal
$K$-point on $X_0(20)$. The \texttt{Small modular curves package}
in \texttt{Magma} allows us to write down an equation for $X_0(20)$:
\[
X_0(20) \; : \; y^2 = x^3 + x^2 + 4 x + 4.
\]
This is an elliptic curve. We're interested in computing
the Mordell--Weil group $X_0(20)(K)$. Using
\texttt{Magma} (which uses standard descent
algorithms) we find that
\[
X_0(20)(K)=X_0(20)(\Q)=\{\OO,(4,\pm 10),~(0,\pm 2),~(-1,0)\}.
\]
We can also check, using
the \texttt{Small modular curves package},
that these six points are cusps. Thus
$\overline{\rho}_{E,5}$ is irreducible. For a glimpse
at the ideas behind the algorithms used in this
package, see the survey \cite{modcurves}.
\end{example}
For other examples of the determination
of all points on $X_0(N)$ over a fixed
number field, see
 \cite{PhilippeBielliptic} 
or \cite[Section 4]{Fermat23}.

\section{Non-cuspidal points on $X_{0}(N)$}
Mazur \cite{Mazur77} famously showed that 
$X_{0}(p)(\Q)$ only consists of cuspidal points for all 
but finitely many primes $p$. 
More precisely he proved that $X_{0}(p)(\Q)$ possesses 
a non-cuspidal point if and only if
\[
	p\in \{2, 3, 5, 7, 11, 13, 17, 19, 37, 43, 67, 163\}.
\] 


\medskip

In a series of papers culminating in \cite{Kenku82}, Kenku 
completed the classification begun by Mazur by showing that $X_{0}(N)(\Q)$
possesses 
a non-cuspidal point precisely when
\[
N\in \{1-19, 21, 25, 27, 37, 43, 67, 163\}.
\]
Although there are currently no analogous results 
for higher degree number fields, there has been a significant 
advancement in the understanding of all 
quadratic points on $X_{0}(N)$ for small $N$ 
(see e.g. 
\cite{Adzaga}, 
\cite{banwait2022cyclic},
\cite{Box},
\cite{BruinNajman},
\cite{Philippe}, 
\cite{Najman}, 
\cite{OzmanSiksek}).
The general understanding of higher degree points 
on these modular curves is still rudimentary. 
There is however some respectable progress,
and we refer the reader to \cite{Banwait22}, \cite{Box23} and 
\cite{Primitive} for results established in this direction.  

\medskip
It is natural to ponder whether a modular curve $X_{0}(N)$ can 
have infinitely many points of a fixed degree.
We begin our exposition with quadratic points. 
Recall that a curve $C/\Q$ is \textbf{hyperelliptic} 
if $C$ admits a degree $2$ map to $\PP^1$ 
defined over $\Q$. We can write
\[
	C: Y^2=f(X), \qquad f\in\Q[X],\quad \deg(f)\geq 5.
\]
Note that there are only finitely many $\alpha\in\Q$ 
with $\sqrt{f(\alpha)}\in\Q$ since by $C(\Q)$ 
is finite by Faltings' theorem \cite{Faltings}. 
For all other $\alpha \in \Q$,
we obtain a quadratic point $(\alpha,\sqrt{f(\alpha)})$.
Recall that a curve $C/\Q$ 
is \textbf{bielliptic} if it has genus $\ge 2$ and
$C$ admits a degree $2$ map to 
an elliptic curve $E$ defined over $\Q$. 
Let 
\[
	\pi:C\rightarrow E
\]
denote the corresponding degree $2$ map.
Suppose $E(\Q)$ is infinite (i.e. $E$ has positive rank
over $\Q$). 
Then, for all but finitely $P \in E(\Q)$,
the two points in the fibre
 $\pi^{-1}(P)$ 
are quadratic points on $C$.
In fact a theorem of Harris and Silverman
\cite[Corollary 3]{HS} asserts that if a curve
$C/\Q$ of genus $\ge 2$ has infinitely many quadratic 
points then $C$ is either hyperelliptic or bielliptic. 
Ogg \cite{Ogg} has determined all $N$ for which $X_{0}(N)$ 
is hyperelliptic, 
and Bars \cite{Bars} has determined all $N$ for which 
$X_{0}(N)$ is bielliptic. 
Furthermore Jeon \cite{Jeon21} has found all such $N$ such that $X_{0}(N)$ 
has infinitely many cubic points, 
and Hwang and Jeon \cite{HwangJeon} and 
Derickx and Orli\'{c} \cite{DerickxOrlic} 
have independently determined all $N$ for which
$X_{0}(N)$ has infinitely many quartic points. 

\medskip

The modular curve $X=X_{0}(37)$ is 
an outlier in the sense that it has two 
infinite sources of quadratic points; 
it is both hyperelliptic and bielliptic. 
Box \cite[Proposition 5.4]{Box} 
has given a description of all 
quadratic points on $X_{0}(37)$.
In his study of the Fermat equation 
over real quadratic fields, 
Michaud-Jacobs \cite[Lemma 4.9]{PhilippeFermat} 
proved the following result which was inspired 
by Kamienny's formal immersion criterion 
\cite[pg. 223-225]{Kamienny}.

\begin{lem}[Michaud-Jacobs]
Let $K$ be a quadratic field. 
Let $E$ be an elliptic curve defined over $K$. 
Suppose $E$ has a point of order $2$ defined over $K$.
Suppose $E$ has multiplicative reduction 
at all primes of $K$ above $q$, where $q\neq 2, 37$ is a prime. 
Then $\overline{\rho}_{E,37}$ is irreducible.
\end{lem}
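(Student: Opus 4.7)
The plan is to argue by contradiction. Suppose that $\overline{\rho}_{E,37}$ is reducible. Then $E$ admits a $G_K$-stable cyclic subgroup of order $37$, and together with the $G_K$-stable cyclic subgroup of order $2$ generated by the hypothesised rational $2$-torsion point, we obtain a $G_K$-stable cyclic subgroup of order $74$. This corresponds to a non-cuspidal point $P\in X_0(74)(K)$. Let $\sigma$ denote the non-trivial element of $\Gal(K/\Q)$, so that the divisor $D=P+P^{\sigma}$ is a $\Q$-rational point of the symmetric square $X_0(74)^{(2)}$. Since $E$ has multiplicative reduction at every prime of $\OO_K$ above $q$, and $q\neq 2,37$ is a prime of good reduction for $X_0(74)$, the point $P$ specialises to a cusp of $X_0(74)_{\F_q}$, so $D$ reduces to an effective cuspidal divisor $c_1+c_2$ modulo $q$.

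Next I would invoke a Kamienny-style formal immersion argument (see \cite{Kamienny}). Let $A$ be a quotient of the Jacobian $J_0(74)$ with $A(\Q)$ finite, and consider the Abel--Jacobi-type map
\[
\Phi \; : \; X_0(74)^{(2)} \longrightarrow A, \qquad \{R_1,R_2\}\mapsto [R_1+R_2-(c_1+c_2)].
\]
Since $74$ is squarefree, every cusp of $X_0(74)$ is defined over $\Q$, so $\Phi$ is defined over $\Q$. Because $A(\Q)$ is finite and $\Phi(D)$ reduces to $0$ in $A(\F_q)$, the injectivity of reduction on $A(\Q)_{\tors}$ for odd $q$ of good reduction of $A$ forces $\Phi(D)=0$ in $A(\Q)$. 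If $\Phi$ is a formal immersion at the closed point $c_1+c_2$ in characteristic $q$, then $D=c_1+c_2$ in $X_0(74)^{(2)}(\Q)$, contradicting the non-cuspidality of $P$.

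The execution reduces to three ingredients: (i) identifying a quotient $A$ of $J_0(74)$ with $A(\Q)$ of rank zero, via analytic rank computations on the newforms of level dividing $74$ combined with Kolyvagin--Logachev; (ii) verifying the formal immersion criterion at each possible cuspidal specialisation $c_1+c_2$ modulo $q$, which is a linear-algebra statement about the induced map on cotangent spaces, controlled by the action of the first two Hecke operators on an appropriate basis of cusp forms; and (iii) confirming that the only primes for which this criterion can fail lie in the excluded set $\{2,37\}$. The main obstacle will be arranging the formal immersion to hold uniformly in $q$, which is precisely why the hypothesis of a $K$-rational $2$-torsion point is essential: without it one could at best work with $X_0(37)$, whose Jacobian has a rank-one factor over $\Q$ and which carries infinitely many quadratic points, ruling out any such uniform argument.
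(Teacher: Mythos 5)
The paper itself gives no proof of this lemma; it simply cites Michaud-Jacobs \cite[Lemma 4.9]{PhilippeFermat} and remarks that the argument is inspired by Kamienny's formal immersion criterion. Your framework matches exactly that description: you use the rational $2$-torsion to pass from $X_0(37)$ (which, as the survey itself notes, is both hyperelliptic and bielliptic and hence useless here) to $X_0(74)$; you form the Galois-stable degree-$2$ divisor $D=P+P^\sigma$ on the symmetric square; you use the multiplicative-reduction hypothesis to force $D$ to specialise to a cuspidal divisor in residue characteristic $q$; you reduce to a rank-zero quotient $A$ of $J_0(74)$ and deduce $\Phi(D)=0$ from injectivity of reduction on torsion; and you close via a Kamienny-style formal immersion. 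Your observation that the cusps of $X_0(74)$ are all $\Q$-rational (because $74$ is squarefree) is correct and is needed to make $\Phi$ a $\Q$-morphism, and your remark about why $X_0(37)$ alone is doomed is exactly right. So your route is the same one the cited reference takes, not a genuinely different decomposition.

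What keeps this from being a proof is that all three ingredients you flag are deferred, and they are where the actual content lives. In particular, item (iii) — that the formal immersion criterion at every relevant cuspidal pair $c_1+c_2$ holds for \emph{every} prime $q\neq 2,37$ — is a strong, uniform-in-$q$ claim that cannot be asserted from the general Kamienny philosophy alone; the standard $T_1,T_2$ criterion is normally verified at specific small $q$, and extending it uniformly requires knowing that certain explicit Hecke-theoretic quantities are nonvanishing modulo all primes outside $\{2,37\}$. You are also somewhat cavalier about \emph{which} cusps arise as specialisations: the cusp depends on how the cyclic $74$-subgroup sits relative to the Tate parameter (and on the possibly different behaviour at the two primes of $K$ above $q$ when $q$ splits), so one must enumerate and check the formal immersion at each of the finitely many possibilities, not just assert it at one distinguished pair. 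These are not fatal objections — they are exactly the computations that Michaud-Jacobs carries out — but as written your argument is an accurate outline of the strategy, not a self-contained proof.
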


Michaud-Jacobs \cite{PhilippeBielliptic}
later developed a method to compute 
all points on bielliptic $X_{0}(N)$ defined over 
a fixed quadratic field; 
the values $N=37$ and $N=43$ 
are not amenable to this method however.

\section{Relationship to $X_{1}(N)$}
Let $K$ be a number field and 
let $E$ be an elliptic curve 
over $K$.
Several arguments in the literature show
that one possible consequence of $\modpg$ 
being reducible is that $E$ has a point of order $p$ 
defined over an extension of $K$ (see e.g. \cite[Lemma 10.2]{Anni}, \cite[Lemma 6.1]{FLTsmall}, 
\cite[Lemma 3.3]{FS15}, \cite[Lemma 4.4]{Fermat23}).
Recall that non-cuspidal $K$-points 
on the modular curve $X_{1}(p)$ correspond 
to pairs $(E,P)$ where $E$ is an elliptic 
curve defined over $K$ and $P$ is a point 
of order $p$ defined over $K$ 
(see e.g. \cite{Diamond} or \cite{modcurves}).
Merel's uniform boundedness theorem (Theorem \ref{thm:Merel}) asserts that for prime $p$,
and for $d$ satisfying $(3^{d/2}+1)^2 \le p$, the only
degree $d$ points on $X_1(p)$ are cuspidal. 
There is a smaller bound on $p$ for number fields 
of low degree. 
We recall that $p$ is a \textbf{torsion
prime of degree} $d$ if $X_{1}(p)(K)$ 
contains a non-cuspidal point 
for a number field $K$ of degree $d$.
Mazur \cite{Mazur77} showed that if
$p$ is a torsion prime of degree $1$ then $p \le 7$.
We now know the torsion primes of
degree $d$, for each $1\leq d\leq 8$, since Mazur's
work has since been extended by Kamienny \cite{Kamienny}, Parent \cite{Parent, Parent17},
Derickx, Kamienny, Stein and Stoll \cite{DKSS}, and Khawaja \cite{Khawaja}.
We summarise these results in Table \ref{tab:torsionprimes}. 

\begin{table}
\begin{center}
\begin{tabular}{ |c|c|c| } 
 \hline
 $d$ & Torsion primes of degree $d$ & References\\
 \hline
 $1$ & $\{p\leq 7 : p \text{ is prime}\}$ & Mazur \cite{Mazur77} \\ 
 \hline
$2$ & $\{p\leq 13 : p \text{ is prime}\}$ & Kamienny \cite{Kamienny}  \\ 
 \hline
 $3$ & $\{p\leq 13 : p \text{ is prime}\}$ & Parent \cite{Parent, Parent17}\\
 \hline
 $4$ & $\{p\leq 17 : p \text{ is prime}\}$ & Derickx, Kamienny, Stein and Stoll \cite{DKSS}\\
 \hline
  $5$ & $\{p\leq 19 : p \text{ is prime}\}$ & Derickx, Kamienny, Stein and Stoll \cite{DKSS}\\
   \hline
  $6$ & $\{p\leq 19 : p \text{ is prime}\}\cup\{37\}$ & Derickx, Kamienny, Stein and Stoll \cite{DKSS}\\
     \hline
  $7$ & $\{p\leq 23 : p \text{ is prime}\}$ & Derickx, Kamienny, Stein and Stoll \cite{DKSS}\\
     \hline
  $8$ & $\{p\leq 23 : p \text{ is prime}\}$ & Khawaja \cite{Khawaja}\\
  \hline
\end{tabular}
	\caption{Recall that $p$ is a torsion 
	prime of degree $d$ if the modular curve $X_{1}(p)$ contains a 
	non-cuspidal point defined over a number field of degree $d$. 
	This table summarises the exact determination 
	of the set of torsion primes of low degree.}
\end{center}
	\label{tab:torsionprimes}
\end{table}

\part{Modularity of elliptic curves}\label{sec:modularity} 

\section{Hilbert modular forms}
We refer the reader to 
\cite[Section 2]{DembeleVoight} 
for definitions. We content
ourselves with treating Hilbert modular
forms as black-boxes without actually defining them. 
Let $K$ be a totally real number field of degree $d$. 
Write $\OO_{K}$ for the ring of
integers of $K$.
Let $\mathbf{k}=(k_1,k_2,\dotsc,k_d)$ be a list of $d$
positive
integers, and $\mathcal{N}$ be an ideal of $\OO_K$.
Attached to this data is a space
$S_{\mathbf{k}}(\mathcal{N})$ of Hilbert cusp forms
of weight $\mathbf{k}$ and level $\mathcal{N}$.
On this space there is a natural action
of Hecke operators, which leads to the notion
of Hecke eigenforms. There is also a notion
of newforms of weight $\mathbf{k}$ and level
$\mathcal{N}$. These are simultaneous
eigenvectors to all the Hecke eigenforms,
that do not arise from smaller levels $\mathcal{N}^\prime
\mid \mathcal{N}$.
There are finitely many Hilbert newforms $\ff$
of any weight and level, and there are
effective algorithms for computing these,
including the computation of the Hecke eigenfield $\Q_\ff$
and the Hecke eigenvalues. A survey of these algorithms
can be found in \cite{DembeleVoight}.
We point out that there is a 
\texttt{Hilbert modular forms} package in the computer
algebra system
\texttt{Magma} \cite{Magma}, as well as a database 
for Hilbert modular forms on the \texttt{LMFDB} \cite{LMFDB}. 

\section{Modularity of elliptic curves over totally real fields} 
Let $E$ be an elliptic curve defined over a 
totally real number field $K$,
and write $\mathcal{N}$ for the conductor of $E$. 
We say $E$ is \textbf{modular} if there
is a Hilbert newform $\ff$ over $K$,
of parallel weight $2$ (i.e. weight $(2,2,\dotsc,2)$,
and level $\mathcal{N}$, having rational
Hecke eigenvalues,
such that $L(E,s)=L(\ff,s)$ or equivalently
$\rho_{E,p}\sim \rho_{\ff,p}$.
If $K=\Q$, then a Hilbert newform
is the same as a classical newform,
and modularity of elliptic curves is already known
thanks to Wiles, Breuil, Conrad, Diamond and Taylor (Theorem~\ref{thm:modularity}). We give a brief survey of modularity results
for elliptic curves over totally real fields, and 
refer the reader to Thorne's survey \cite{Thorne23} for 
a more comprehensive overview.


Jarvis and Manoharmayum \cite{JM08} were the first to 
establish modularity of infinite
families of elliptic curves over a number field
of degree $>1$. 
\begin{thm}[Jarvis and Manoharmayum]
Let $E$ be a semistable elliptic curve
over 
$\Q(\sqrt{2})$ or $\Q(\sqrt{17})$.
Then $E$ is modular.
\end{thm}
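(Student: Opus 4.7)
The plan is to reduce modularity of $E$ to residual modularity of $\overline{\rho}_{E,p}$ for some small prime $p \in \{3,5,7\}$, and then invoke a modularity lifting theorem valid over totally real fields (due to Fujiwara, Kisin, or Skinner--Wiles depending on whether the residual representation is irreducible or reducible-ordinary). Semistability of $E$ is essential here, since it guarantees the flat/ordinary hypotheses that these lifting theorems impose at the primes of $K$ above $p$.

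First I would try $p=3$. The image $\overline{\rho}_{E,3}(G_K) \subseteq \GL_2(\F_3)$ has solvable projective image, since $\PGL_2(\F_3) \cong S_4$. By the Langlands--Tunnell theorem, the projective representation is automorphic, and a congruence argument of Wiles (transferring between holomorphic and Maass forms) upgrades this to modularity of $\overline{\rho}_{E,3}$ itself. Provided $\overline{\rho}_{E,3}|_{G_{K(\zeta_3)}}$ is absolutely irreducible, a Fujiwara-type modularity lifting theorem then concludes that $E$ is modular.

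The remaining cases are those in which $\overline{\rho}_{E,3}$ is too small to lift: it is reducible, or lies in the normalizer of a Cartan, or (via Dickson) has projectively exceptional image. For these I would use a \textbf{3--5 switch}: find an auxiliary semistable elliptic curve $E'/K$ with $\overline{\rho}_{E,5} \cong \overline{\rho}_{E',5}$ but with $\overline{\rho}_{E',3}$ of large image. The existence of such $E'$ reduces to finding a $K$-rational point on a twist of $X(5)$, which is a rational surface, so one can genuinely hope to produce it. Once $E'$ is modular by the $p=3$ argument, $\overline{\rho}_{E,5}$ is modular, and a second modularity lifting theorem yields modularity of $E$. A parallel 3--7 switch disposes of further residual configurations at $p=5$.

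The hard part will be the small finite list of exceptional residual configurations for which all of the $p=3,5,7$ switches simultaneously fail, together with verifying the technical hypotheses (absolute irreducibility after restriction to $G_{K(\zeta_p)}$, ordinarity, and distinguishedness of the associated characters in the reducible case). For a generic totally real field there is no a priori reason to rule out such exceptional $E$; the special role of $K = \Q(\sqrt{2})$ and $\Q(\sqrt{17})$ is that over these fields one can explicitly analyse the relevant modular curves parametrising the exceptions (for instance certain twists of $X(5)$ and $X(7)$, and possibly components of $X_0(15)$ or $X_0(21)$) and check that every exception either comes from a CM curve (which is modular by classical theory of CM) or is excluded by the semistability hypothesis. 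This case-by-case analysis, tied to the specific arithmetic of the two quadratic fields in question, is where the main technical work of the theorem lies.
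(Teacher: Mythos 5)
The paper does not prove this theorem; it is cited as a black box from the reference [JM08], so there is no internal proof to compare against. Judged against the actual Jarvis--Manoharmayum argument, your sketch reproduces the correct architecture: residual modularity of $\overline{\rho}_{E,3}$ via Langlands--Tunnell over the totally real base, a Fujiwara/Skinner--Wiles modularity lifting theorem (with semistability supplying the local flat/ordinary hypotheses at primes above $3$), and a $3$--$5$ switch when the mod $3$ image is too small, with the obstruction to completing the switch governed by $K$-points on small modular curves such as $X_0(15)$ (and $X_0(21)$ for the mod $7$ considerations). It is exactly the tractability of that residual case analysis over the two specific fields $\Q(\sqrt{2})$ and $\Q(\sqrt{17})$ that gives the theorem its scope, and you correctly flag this as the crux.

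One inaccuracy to fix: the twist of $X(5)$ parametrising elliptic curves with the same mod $5$ Galois representation as $E$ is a genus $0$ \emph{curve}, not a ``rational surface'', and it already has a $K$-rational point, namely the one corresponding to $E$ itself. So the existence of a $K$-point is not where the difficulty lies. The substance of the $3$--$5$ switch is to apply Hilbert irreducibility over $K$ to this genus $0$ family to produce an auxiliary curve $E'$ whose mod $3$ image is large \emph{and} which satisfies the local conditions of the lifting theorem at the primes above $3$ --- in particular handling the case where $E$ (or $E'$) is supersingular at such a prime, which is the hardest part and the main technical burden of [JM08]. Your proposal rightly defers the finite case analysis over the two specific fields but does not carry it out, so it should be read as a plausible outline of the strategy rather than a proof.
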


\section{Modularity lifting and switching} 
The following theorem 
builds on modularity lifting
theorems due to many authors,
notably Wiles, Taylor, Skinner, Kisin, 
Barnet-Lamb, Gee, Geraghty, Breuil, Diamond,
and modularity switching arguments
due to Wiles \cite{wilesswitch}  
and Manoharmayum \cite{Man01, Man04}.
The theorem appears in the paper of
Freitas, Le Hung and Siksek \cite[Theorem 3 \& 4]{quadmod},
but is in fact a synthesis of many earlier results.

\begin{thm}[Wiles, Manoharmayum, and Freitas, Le Hung and Siksek]
	\label{thm:357}
Let $E$ be an elliptic curve over a totally real field $K$. 
Let $p=3$, $5$ or $7$. 
If 
\[
(\overline{\rho}_{E, p}(G_K)) \cap \SL_2(\F_p)
\]
 is absolutely 
irreducible then $E$ is modular. 
\end{thm}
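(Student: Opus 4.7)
The plan is to reduce all three cases to $p=3$ via modularity switching, and in the $p=3$ case combine residual modularity from Langlands--Tunnell with a modularity lifting theorem for totally real fields.

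\textbf{Case $p=3$.} Here residual modularity is essentially automatic: the projective image sits inside $\mathrm{PGL}_2(\F_3)\cong S_4$, which is solvable, so by Langlands--Tunnell there exists a weight one Hilbert modular form matching $\overline{\rho}_{E,3}$ up to twist; after a suitable twist one can arrange that $\overline{\rho}_{E,3}$ itself is associated to a weight two Hilbert newform. The hypothesis that $\overline{\rho}_{E,3}(G_K)\cap\SL_2(\F_3)$ is absolutely irreducible is exactly what is needed to ensure that the restriction $\overline{\rho}_{E,3}\vert_{G_{K(\zeta_3)}}$ is absolutely irreducible (since $K(\zeta_3)/K$ is the fixed field of $\ker\chi_3$), which is the key "big image" hypothesis for the modularity lifting theorems of Wiles, Taylor, Skinner, Kisin, and Barnet-Lamb--Gee--Geraghty over totally real fields. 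One then lifts the residual modularity of $\overline{\rho}_{E,3}$ to modularity of $\rho_{E,3}$, hence of $E$.

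\textbf{Cases $p=5,7$: the $3$--$5$ and $3$--$7$ switch.} I would find an auxiliary elliptic curve $E'/K$ such that $\overline{\rho}_{E',p}\sim\overline{\rho}_{E,p}$ (as symplectic $G_K$-modules, matching the Weil pairing), and such that $\overline{\rho}_{E',3}(G_K)\cap\SL_2(\F_3)$ is absolutely irreducible. Pairs $(E',\phi)$ with $\phi\colon E'[p]\xrightarrow{\sim} E[p]$ respecting the Weil pairing are parametrised by a twist of the modular curve $X(p)$. For $p=5$ this twist has genus $0$ and possesses a $K$-rational point (since $E$ itself provides one), so it is isomorphic to $\PP^1_K$ and yields a one-parameter family of candidate $E'$; for $p=7$ the analogous twist has genus $3$, and one uses Manoharmayum's explicit parametrisation via a covering of the twisted $X(7)$ to produce candidates. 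Within this family, I would apply a Hilbert irreducibility argument over $K$ to show that outside a thin set the mod $3$ representation of $E'$ satisfies the required absolute irreducibility condition. Having produced $E'$, the $p=3$ case gives modularity of $E'$, whence $\overline{\rho}_{E',p}\sim\overline{\rho}_{E,p}$ is modular, and a modularity lifting theorem at $p$ (using again the absolute irreducibility of $\overline{\rho}_{E,p}(G_K)\cap\SL_2(\F_p)$ restricted to $G_{K(\zeta_p)}$) upgrades this to modularity of $\rho_{E,p}$ and hence of $E$.

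\textbf{Main obstacles.} The technically demanding points are (a) the verification that the modularity lifting theorems actually apply in the present generality; in particular one must check local conditions at primes above $p$, which is delicate when $e(\fp/p)\geq p-1$ and requires potential versions due to Barnet-Lamb--Gee--Geraghty and Kisin, and (b) producing the switched curve $E'$ with the correct residual image at $3$ while preserving the mod $p$ representation. The latter is where Hilbert irreducibility over a totally real field is invoked, and it also requires care to ensure $E'$ is non-CM and that the twisted $X(p)$ has enough $K$-points; for $p=7$ this is the most intricate step and ultimately rests on Manoharmayum's covering construction.
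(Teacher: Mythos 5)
You should first note that the survey itself does not prove Theorem~\ref{thm:357}: it quotes it from Freitas--Le~Hung--Siksek \cite{quadmod}, so the comparison is with the argument of that paper, and your overall strategy (Langlands--Tunnell plus a lifting theorem at $p=3$, switching arguments at $p=5,7$) is indeed the strategy used there. Your reformulation of the hypothesis is correct: since $\det\overline{\rho}_{E,p}=\chi_p$ and $G_{K(\zeta_p)}=\ker\chi_p$, one has $\overline{\rho}_{E,p}(G_{K(\zeta_p)})=\overline{\rho}_{E,p}(G_K)\cap\SL_2(\F_p)$, so the stated condition is exactly absolute irreducibility of $\overline{\rho}_{E,p}\vert_{G_{K(\zeta_p)}}$, the Taylor--Wiles hypothesis. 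The $p=3$ and $p=5$ parts are essentially right, with minor caveats: the passage from the weight-one form produced by Langlands--Tunnell to a weight-two form is achieved by multiplying by an Eisenstein series congruent to $1$ modulo a prime above $3$ (Deligne--Serre style), not by twisting; the condition $e(\fp/p)<p-1$ you worry about belongs to level lowering, not to the lifting theorems used here (which need only the potentially Barsotti--Tate/ordinary local shape, automatic for elliptic curves); and for $p=5$ there is a known exceptional case of the Taylor--Wiles condition (projective image $\PGL_2(\F_5)$ with $\sqrt{5}\in K$) that needs separate treatment and is glossed over by your blanket appeal to the lifting theorems.

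The genuine gap is your $p=7$ case. The symplectic twist of $X(7)$ by $E[7]$ is a genus-$3$ plane quartic (a twist of the Klein quartic), so by Faltings it has only finitely many $K$-points, typically only those coming from $E$ itself; there is therefore no positive-dimensional family of $K$-rational candidates $E'$, and a Hilbert irreducibility argument over $K$ simply cannot be run --- Hilbert irreducibility applies to the $p=5$ case precisely because the twisted $X(5)$ is a $\PP^1_K$, and Manoharmayum's mod-$7$ work over $\Q$ does not supply a substitute parametrisation over a general totally real $K$. The proof in \cite{quadmod} gets around this by abandoning the search for $E'$ over $K$: one finds a suitable point on the twisted curve over a well-chosen finite \emph{totally real} extension $L/K$ whose Galois closure is solvable (such points are produced by explicit geometry on the twisted quartic, arranging at the same time that the auxiliary curve $E'/L$ has large enough mod-$3$ (or mod-$5$) image and that $\overline{\rho}_{E,7}\vert_{G_{L(\zeta_7)}}$ remains absolutely irreducible); the lifting theorem then gives modularity of $E$ over $L$, and modularity over $K$ is recovered by Langlands' solvable base change descent, using cuspidality coming from irreducibility. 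This passage to an auxiliary totally real solvable extension together with automorphy descent is an essential ingredient that is entirely absent from your sketch, and without it your $p=7$ argument does not go through.
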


Let's explain what absolute irreducibility here means.
Let $H$ be a subgroup of $\GL_2(\F_p)$. We say that
$H$ is \textbf{reducible} if there is an  eigenvector
$\vv \in \F_p^2 \setminus \{ \mathbf{0}\}$ common to all
the matrices in $H$. If $H=\overline{\rho}_{E,p}(G_K)$,
then $\overline{\rho}_{E,p}$ is reducible if and only if $H$
is reducible. To see this, note that if $\overline{\rho}_{E,p}$
is reducible then its image $H$ is contained in the Borel
subgroup \eqref{eqn:Borel}, and so 
\[
\vv=\begin{pmatrix} 
1 \\
0
\end{pmatrix}
\]
is a common eigenvector for all matrices in $H$.
Conversely, if all matrices have a common eigenvector $\vv \in \F_p^2\setminus \{\mathbf{0}\}$, then we can conjugate $H$ so that it is contained
in the  Borel subgroup.

We say that a subgroup $H$ of $\GL_2(\F_p)$ 
is \textbf{absolutely reducible} if there an eigenvector
$\vv \in \overline{\F}_p^2 \setminus \{\mathbf{0}\}$
common to all the matrices in $H$.
\begin{example}
Let $H$ be a cyclic subgroup of $\GL_2(\F_p)$.
Then $H$ is absolutely reducible. Indeed,
let $A$ be a generator of $H$. Then $A$
has an eigenvector $\vv \in \overline{\F}_p^2 \setminus \{\mathbf{0}\}$.
Now $\vv$ is also an eigenvector for all powers $A^k$,
and so $H$ is absolutely reducible.

For a concrete example, let's return to the setting of Section~\ref{sec:example}. Here $p=2$ and $E$ is an elliptic curve of the form
$y^2=f(x)$ over a field of characteristic $\ne 2$. We consider the
case where $f$ is irreducible but $\Delta(f) \in (K^\ast)^2$.
We found that $H=\overline{\rho}_{E,2}(G_K)$ is cyclic of order $3$,
generated by the matrix
\[
A=\begin{pmatrix}
1 & 1\\
1 & 0
\end{pmatrix}.
\]
We note that $A$ has characteristic polynomial
$X^2-X-1$, which is irreducible over $\F_2$.
Therefore $H$ is irreducible (i.e. $\overline{\rho}_{E,2}$
is irreducible). However, the characteristic polynomial
has roots in $\F_4$,
hence $A$ has an eigenvector
in $\F_4^2 \setminus \{\mathbf{0}\}$, and so $H$
is absolutely reducible.
\end{example}

\section{A general modularity theorem for elliptic curves over totally real fields}
As a consequence of Theorem \ref{thm:357}, a 
non-modular elliptic curve over a totally real field $K$ 
corresponds to a non-cuspidal $K$-point on one of seven
complicated  modular curves 
(see \cite[Section 11]{quadmod}). 
These curves have genera $3$, $3$, $4$, $73$, $97$, $113$ and $153$.
By Faltings' theorem, there are at most finitely many $K$-points
on each of these curves. This 
simple observation yields  
the following theorem \cite[Theorem 5]{quadmod}.
\begin{thm}[Freitas, Le Hung and Siksek]\label{thm:exceptions}
Let $K$ be a totally real field. Then there
are at most only finitely many $j$-invariants
of non-modular elliptic curves defined over $K$.
\end{thm}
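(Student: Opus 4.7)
The plan is to combine Theorem~\ref{thm:357} with Faltings' theorem \cite{Faltings} by ruling out non-modularity through the Galois representations at the three small primes $p = 3, 5, 7$. Suppose $E/K$ is a non-modular elliptic curve over the totally real field $K$. Then by Theorem~\ref{thm:357} the subgroup
\[
H_p \; := \; \overline{\rho}_{E,p}(G_K) \, \cap \, \SL_2(\F_p)
\]
is absolutely reducible for each of $p = 3, 5, 7$. Since $\SL_2(\F_p) \triangleleft \GL_2(\F_p)$, each $H_p$ is normal in $G_p := \overline{\rho}_{E,p}(G_K)$. An absolutely reducible subgroup of $\SL_2(\F_p)$ is contained (over $\overline{\F}_p$) in a Cartan subgroup, so Dickson's classification forces $G_p$ to lie (up to conjugation) inside one of: the Borel $B_0(p)$, the normalizer of a split Cartan $N_s^+(p)$, the normalizer of a non-split Cartan $N_{ns}^+(p)$, or the preimage of an exceptional subgroup of $\PGL_2(\F_p)$.

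Next, I would convert each such subgroup constraint into a modular curve: for each admissible $G_p \subseteq \GL_2(\F_p)$, there is a modular curve $X_{G_p}/\Q$ whose non-cuspidal $K$-points parametrize elliptic curves $E'/K$ with mod-$p$ image contained (up to conjugation) in $G_p$. The non-modular $E$ then produces a $K$-point on the fibre product
\[
X \; = \; X_{G_3} \times_{X(1)} X_{G_5} \times_{X(1)} X_{G_7}.
\]
A careful case analysis of which triples $(G_3, G_5, G_7)$ are consistent, after discarding combinations that are empty, that factor through a curve already known to have no non-modular points, or that correspond to CM elliptic curves (which are modular for independent reasons), reduces the problem to finitely many fibre-product curves. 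According to \cite[Section 11]{quadmod} this enumeration leaves exactly seven such modular curves.

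The main obstacle, and the computational heart of the proof, is to compute the genus of each of these seven curves and verify that every one has genus at least $2$. This requires explicit models, Riemann--Hurwitz computations for the covers $X_{G_p} \to X(1)$, and careful handling of the fibre products; the outcome is that the genera are $3, 3, 4, 73, 97, 113$ and $153$. Once these genus bounds are in hand, Faltings' theorem applies to each curve over $K$, yielding only finitely many $K$-points. Since each $K$-point of $X$ determines a unique $j$-invariant of $E$ (via the forgetful map to $X(1)$), the set of $j$-invariants of non-modular elliptic curves over $K$ is contained in a union of seven finite sets, and is therefore finite.
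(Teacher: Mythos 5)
Your proposal is correct and takes essentially the same approach as the paper: use Theorem~\ref{thm:357} at $p=3,5,7$ to show that a non-modular $E$ gives a non-cuspidal $K$-point on one of the seven modular curves listed in \cite[Section 11]{quadmod}, note their genera are $3, 3, 4, 73, 97, 113, 153$ (all $\ge 2$), and apply Faltings' theorem. One minor wording issue: an absolutely reducible subgroup of $\SL_2(\F_p)$ need not lie in a Cartan over $\overline{\F}_p$ (unipotent subgroups are a counterexample), only in a Borel over $\overline{\F}_p$; but this does not affect your argument, since all you need to invoke Dickson's classification is that $H_p$ absolutely reducible forces $H_p \ne \SL_2(\F_p)$, hence $G_p \not\supseteq \SL_2(\F_p)$.
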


\begin{example}
Let us give an example of how Theorem~\ref{thm:exceptions}
is applied to Frey curves. 
Let $K$ be a totally real field, and let $j_1,j_2,\dotsc,j_s$
be the finite list of non-modular $j$-invariants whose
existence is asserted by Theorem~\ref{thm:exceptions}.
Consider the Frey curve
\[
E \; : \; Y^2=X(X-a^p)(X+b^p)
\]
attached to a non-trivial integral solution $(a,b,c)$ of the Fermat
equation 
\[
a^p+b^p+c^p=0,
\]
Write $u=a^p$, $v=b^p$, $w=c^p$.
Then, the $j$-invariant of $E$ is
\[
j=256\frac{(w^2-uv)^3}{u^2 v^2 w^2}.
\]
Suppose $E$ is non-modular. Then $j=j_i$ for some $i=1,\dotsc,s$. Thus 
the triple $(u,v,w)$ satisfies the two relations
\[
u+v+w=0, \qquad 256 (w^2-uv)^3= j_i \cdot u^2 v^2 w^2.
\]
Eliminating $w$ we have
\[
256 (u^2+uv+v^2)^3 - j_i u^2v^2(u+v)^2=0.
\]
This is a homogeneous equation, and dividing by $v^6$
gives us that $u/v$ is a root of a polynomial
equation of degree at most $6$. Thus, we obtain
a finite set $\alpha_1,\dotsc,\alpha_n \in K$
of possibilities
for the ratio $(a/c)^p=u/v$. But, for $p$ sufficiently large,
the equation $(a/c)^p=\alpha_k$ has no solutions unless
$\alpha_k$ is a root of unity. As $K$ is real, the only roots
of unit in $K$ are $\pm 1$. Thus, for $p$ sufficiently large
$a=\pm c$ and similarly $b=\pm c$, which contradicts
$a^p+b^p+c^p=0$ and $abc \ne 0$.
\end{example}

\section{Modularity of elliptic curves over low degree fields}
As discussed previously, thanks to Theorem~\ref{thm:357},
elliptic curves over a totally real field $K$
that fail to be modular give rise to $K$-points
on one of seven complicated modular curves.
Methods
for computing low degree points on modular curves
have been applied to these complicated modular curves,
yielding the following theorem \cite{quadmod}.
\begin{thm}[Freitas, Le Hung and Siksek]
Elliptic curves over real quadratic fields
are modular.
\end{thm}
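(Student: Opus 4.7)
The plan is to invoke Theorem~\ref{thm:357} with $p \in \{3,5,7\}$ and reduce the problem to a finite Diophantine computation on modular curves. Concretely, if $E$ is an elliptic curve over a real quadratic field $K$ which fails to be modular, then for each prime $p \in \{3,5,7\}$ the group $\overline{\rho}_{E,p}(G_K) \cap \SL_2(\F_p)$ is absolutely reducible. Classifying, via Dickson's theorem, those subgroups $H \subseteq \GL_2(\F_p)$ whose intersection with $\SL_2(\F_p)$ is absolutely reducible, and combining the constraints across the three primes, I obtain (after discarding combinations that are incompatible with $\det\overline{\rho}_{E,p} = \chi_p$ and oddness) the seven modular curves of genera $3,3,4,73,97,113,153$ alluded to in the discussion of Theorem~\ref{thm:exceptions}; a non-modular $E/K$ yields a non-cuspidal $K$-point on one of them.

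The task therefore becomes: for each of these seven modular curves $X_i$, determine all quadratic points, i.e.\ the set $\bigcup_{K} X_i(K)$ as $K$ ranges over real quadratic fields, and then show every such point either is a cusp, corresponds to a CM elliptic curve (whose modularity is known via the theory of CM Hecke characters), or corresponds to an elliptic curve whose modularity can be certified directly by matching its $L$-series against an explicit Hilbert newform.

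For the three low-genus curves I would write down explicit models, identify the modular automorphism groups (Atkin--Lehner involutions and the like), and exploit any hyperelliptic or bielliptic structure to enumerate quadratic points directly, falling back on symmetric Chabauty or a Mordell--Weil sieve on the symmetric square if needed. For the four high-genus curves direct computation on $X_i$ is hopeless, so the strategy is to pass to a well-chosen quotient $X_i \to Y_i$ by a subgroup of modular automorphisms so that $Y_i$ has small genus and the induced map on quadratic points is controlled; the quadratic points of $X_i$ then pull back from the $K$-rational points of $Y_i$ together with possibly the rational points of $Y_i$ whose fibre contributes a conjugate pair.

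The main obstacle is this last reduction for the high-genus curves: identifying a quotient $Y_i$ that is both small enough (genus $\le 2$, say) for quadratic-point methods to succeed and large enough that the full set of $K$-points of $X_i$ can be recovered; controlling the Mordell--Weil group of $\mathrm{Jac}(Y_i)$ well enough to carry out a Chabauty-style argument; and finally verifying, for the handful of sporadic quadratic points that emerge, that the corresponding elliptic curve is CM or modular by explicit computation. The curves of genus $97$, $113$ and $153$ are the most worrisome, and the success of the whole plan hinges on the richness of their modular symmetries producing usable quotients.
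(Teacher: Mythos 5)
Your proposal follows essentially the same route the paper sketches: by Theorem~\ref{thm:357} a non-modular curve over a real quadratic field gives a non-cuspidal quadratic point on one of the seven modular curves of genera $3,3,4,73,97,113,153$, and the theorem is then reduced to determining the quadratic points on these curves via explicit models, Atkin--Lehner quotients and Chabauty-type arguments, which is exactly the strategy of Freitas, Le Hung and Siksek in \cite{quadmod} that the survey cites rather than proves. The only caveat is that the infinite families of quadratic points that do occur are handled in \cite{quadmod} by showing the corresponding curves have rational $j$-invariant, CM, or are $\Q$-curves (hence modular by base change), rather than by certifying individual curves against Hilbert newforms, but this is a refinement of, not a departure from, your plan.
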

Later work of Thorne \cite{Thorne} and Kalyanswamy \cite{Kalyanswamy} 
refined Theorem \ref{thm:357}, and resultingly 
a non-modular elliptic curve defined over a totally real field $K$ 
satisfying $\sqrt{5} \notin K$, and $\Q(\zeta_7) \cap K=\Q$,
corresponds to a non-cuspidal $K$-point on one of four
modular curves (see \cite[Section 1.2]{quarticmod}). 
These improvements by Thorne and Kalyanswamy
paved the way for the following two
theorems due to Derickx, Najman and Siksek \cite{cubmod}
and to Box \cite{quarticmod}.
\begin{thm}[Derickx, Najman and Siksek]
Elliptic curves over totally real cubic
fields are modular.
\end{thm}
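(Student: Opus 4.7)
The plan is to reduce the modularity of elliptic curves over totally real cubic fields to a finite problem on a small list of modular curves, via the refined version of Theorem~\ref{thm:357} due to Thorne and Kalyanswamy, and then to resolve that finite problem by explicit computation of cubic points.

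First I would observe that if $K$ is a totally real cubic field then automatically $\sqrt{5}\notin K$, since $\Q(\sqrt{5})$ is a quadratic subfield and $[K:\Q]=3$. The remaining hypothesis in the Thorne--Kalyanswamy refinement is $\Q(\zeta_7)\cap K=\Q$. Since $\Q(\zeta_7)$ has degree $6$ over $\Q$ with unique cubic subfield $\Q(\zeta_7)^+$, this condition fails precisely when $K=\Q(\zeta_7)^+$. I would therefore split into two cases: the generic case $K\neq\Q(\zeta_7)^+$, and the exceptional case $K=\Q(\zeta_7)^+$, treating the latter separately at the end.

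In the generic case, the refined Theorem~\ref{thm:357} tells us that if $E/K$ is non-modular then $E$ gives rise to a non-cuspidal $K$-point on one of four explicit modular curves $X_1,\dotsc,X_4$ (as in \cite[Section 1.2]{quarticmod}). The main step is then to determine all cubic points on each $X_i$, over \emph{any} totally real cubic field, and to verify that each such point either is cuspidal or corresponds to a CM elliptic curve (which is automatically modular) or to an elliptic curve whose modularity can be established by a separate argument. For each $X_i$, I would first compute the Jacobian $J_i$, determine $J_i(\Q)$ (or a finite-index subgroup together with the torsion), and then apply symmetric Chabauty--Coleman on $X_i^{(3)}$ combined with a Mordell--Weil sieve to enumerate the cubic points; in cases where the rank is too large for direct Chabauty, I would use relative symmetric Chabauty via a quotient by an Atkin--Lehner involution, or pull back along a degree-$2$ map to an elliptic curve of rank $0$. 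For each cubic point produced, I would compute the $j$-invariant and check that the associated elliptic curve is either CM or already covered by Theorem~\ref{thm:357}.

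The exceptional case $K=\Q(\zeta_7)^+$ I would handle directly. Here one can use modularity switching: the Hilbert modular forms of parallel weight $2$ and small level over $\Q(\zeta_7)^+$ are explicitly computable (via the \texttt{Magma} package), so one can either verify modularity of finitely many remaining $j$-invariants by hand, or apply a variant of the Manoharmayum--Wiles switching trick tailored to this specific field using auxiliary primes $p\in\{3,5,7\}$ where the hypothesis on $\overline{\rho}_{E,p}$ is more delicate. The main obstacle throughout is the Chabauty computation on the higher-genus curves among $X_1,\dotsc,X_4$: their Jacobians have high rank relative to their genus over cubic fields, which is precisely why the analogous result was not already a corollary of Theorem~\ref{thm:exceptions}, and overcoming this requires a careful choice of auxiliary primes for the sieve together with decomposition of the Jacobians into simple isogeny factors.
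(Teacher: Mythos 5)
Your outline follows the same architecture as the cited paper of Derickx, Najman and Siksek \cite{cubmod}: reduce via the Thorne--Kalyanswamy refinement of Theorem~\ref{thm:357} to a short list of modular curves (noting correctly that $\sqrt{5}\notin K$ for degree reasons, and that $\Q(\zeta_7)\cap K=\Q$ fails for cubic $K$ only when $K=\Q(\zeta_7)^+$, the unique cubic subfield of $\Q(\zeta_7)$), determine cubic points on those curves, and treat $\Q(\zeta_7)^+$ as a separate case. Bear in mind that the survey itself offers no proof of this theorem---it merely records the reference---so you are reconstructing an argument rather than reproducing one.

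Everything after the reduction step is a menu of techniques rather than a verification that any of them applies. The curves in the refined list have genera as large as $153$; whether symmetric Chabauty is usable for cubic points depends on the ranks of the Jacobians and on the Chabauty inequality $r+3\le g$ holding after passing to suitable quotients, none of which you check. You also invoke relative symmetric Chabauty, but that method is due to Box, Gajovi\'{c} and Goodman and was developed afterwards for Box's quartic-field theorem; it was not in the toolkit for the cubic case, where Derickx, Najman and Siksek relied on tailored quotients, a Mordell--Weil sieve, and local arguments applied curve by curve. Your treatment of $\Q(\zeta_7)^+$ is similarly hedged---``compute Hilbert newforms or apply a switching trick'' is a placeholder, not a proof, and the actual handling of this exceptional field requires its own modular-curve analysis outside the Kalyanswamy reduction. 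None of your steps are wrong in direction, but the proposal does not close the gap between the reduction and the theorem, which is at bottom a substantial finite computation.
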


\begin{thm}[Box]
Let $K$ be a totally real quartic field
and suppose $\sqrt{5} \notin K$.
Let $E$ be an elliptic curve defined
over $K$. Then $E$ is modular.
\end{thm}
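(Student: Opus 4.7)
The plan is to reduce modularity of $E/K$ (for $K$ a totally real quartic field with $\sqrt{5} \notin K$) to a finite computation on a small number of explicit modular curves, following the general paradigm established for the real quadratic and totally real cubic cases. The starting point is Theorem~\ref{thm:357}: if the restricted image $\overline{\rho}_{E,p}(G_K) \cap \SL_2(\F_p)$ is absolutely irreducible for some $p \in \{3,5,7\}$, then $E$ is modular. Thus we only have to control the elliptic curves $E/K$ for which this condition fails for all three primes simultaneously.

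\textbf{Step 1: Handling the auxiliary hypothesis on $\zeta_7$.} The refinements of Thorne and Kalyanswamy narrow the failure locus under the assumptions $\sqrt{5} \notin K$ and $\Q(\zeta_7) \cap K = \Q$. The first is assumed. For the second, note that $[\Q(\zeta_7):\Q] = 6$, so $\Q(\zeta_7) \cap K$ has degree dividing $\gcd(6,4) = 2$. But the unique quadratic subfield of $\Q(\zeta_7)$ is $\Q(\sqrt{-7})$, which is imaginary and cannot embed into a totally real field. Hence $\Q(\zeta_7) \cap K = \Q$ holds automatically in our setting, and both hypotheses of the Thorne--Kalyanswamy refinement are in force.

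\textbf{Step 2: Reduction to four modular curves.} Applying Thorne and Kalyanswamy, a non-modular $E$ over such a $K$ gives rise to a non-cuspidal $K$-point on one of four explicit modular curves $X^{(1)}, X^{(2)}, X^{(3)}, X^{(4)}$ (as catalogued in \cite[Section~1.2]{quarticmod}). The problem is thereby reduced to the determination of the quartic points on these four curves, subject to the condition that the residue field is totally real and does not contain $\sqrt{5}$.

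\textbf{Step 3: Determination of quartic points.} For each $X^{(i)}$ I would carry out an Abel--Jacobi analysis of the symmetric fourth power $(X^{(i)})^{(4)}$. The strategy, curve by curve, is to compute (or bound) the Mordell--Weil group of the Jacobian $J^{(i)}(\Q)$ using descent in \texttt{Magma}, identify the known sources of quartic points (cusps, CM points, and pullbacks from maps to lower-genus quotients), and then run a Chabauty-type argument—symmetric Chabauty or relative symmetric Chabauty combined with a Mordell--Weil sieve—to show that there are no further quartic points. For each quartic point that does arise, one must verify whether the field of definition is totally real and whether it contains $\sqrt{5}$; any totally real quartic point with $\sqrt{5} \notin K$ that is not cuspidal or CM must be excluded via a separate argument (such a point, if it existed, would have to be shown to correspond to a modular $j$-invariant by other means, e.g.\ by direct computation of $\rho_{E,p}$ and modularity switching).

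\textbf{Main obstacle.} The principal difficulty is Step~3: several of these modular curves have comparatively high genus and Jacobians whose Mordell--Weil rank is close to, or exceeds, the naive Chabauty bound $g - d$ with $d = 4$. The rank computations (especially over $\Q$ with $2$-descent or $\ell$-isogeny descent) can be delicate, and when the rank is too large the plain symmetric Chabauty method fails, forcing the use of relative Chabauty over an auxiliary quadratic field, or of the Mordell--Weil sieve refined to eliminate Galois-stable tuples of residues that do not arise from genuine quartic points. Managing these computations uniformly across all four curves, and extracting from each putative quartic point the geometric information needed to rule it out, is where almost all the work lies; the rest of the argument is formal.
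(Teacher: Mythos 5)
Your skeleton matches the route behind Box's proof as the paper presents it: start from Theorem~\ref{thm:357}, invoke the Thorne--Kalyanswamy refinements, and reduce modularity to controlling points on a short list of modular curves over totally real quartic fields not containing $\sqrt{5}$. Your Step 1 is correct and worth having spelled out: since the only quadratic subfield of $\Q(\zeta_7)$ is the imaginary field $\Q(\sqrt{-7})$, the condition $\Q(\zeta_7)\cap K=\Q$ is automatic for a totally real quartic $K$, so the reduction to the four curves of \cite[Section 1.2]{quarticmod} applies.

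The genuine gap is in Step 3. The four curves in question have genera $13$, $21$, $73$ and $153$, so your default plan --- compute $J^{(i)}(\Q)$ by descent in \texttt{Magma} and run symmetric power Chabauty on $(X^{(i)})^{(4)}$ directly --- is not viable: Mordell--Weil computations (or even provable rank bounds) for Jacobians of that size are out of reach, and this is precisely why the quartic case did not follow by simply repeating the quadratic and cubic arguments. Box's proof instead passes to explicit lower-genus quotients of these curves and uses the relative symmetric Chabauty method of Box, Gajovi\'{c} and Goodman \cite{Box23}, combined with sieving, to transfer the determination of quartic points on the curve to computations on the quotient; in your write-up this appears only as a fallback remark, with no indication of which quotients to use or how the quartic-point problem descends to them, yet it is the heart of the argument rather than a contingency. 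Likewise, the sentence asserting that any surviving non-cuspidal, non-CM totally real quartic point ``must be excluded via a separate argument'' defers exactly the part of the proof that cannot be deferred: the quartic points that do show up have to be identified and shown either to have $\sqrt{5}$ in their field of definition or to correspond to modular $j$-invariants, and carrying this out curve by curve is part of the theorem, not an afterthought.
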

For the proof of his theorem, Box uses 
the newly developed technique of relative
symmetric Chabauty, due
to Box, Gajovi\'{c} and Goodman \cite{Box23},
to control quartic points on certain quotients
of these modular curves.

\section{Higher degree number fields} 
It is natural to ask whether it is currently plausible to establish the 
modularity of all elliptic curves over all
 totally real fields of a higher fixed degree. 
Box \cite[Section 7.2]{quarticmod} ponders this 
question and observes, for example, that one current limitation of establishing the modularity of 
elliptic curves over totally real quintic fields is that the Chabauty 
bound (which holds for degree $4$ points) fails to hold for degree $5$ points. 
A (non-effective) result of Ishitsuka, Ito and Yoshikawa \cite[Theorem 1.2]{IIY22}
asserts that elliptic curves over all but finitely many totally 
real quintic fields are modular.

We highlight the existence of results that assert the modularity of  
elliptic curves over higher degree number fields subject to certain assumptions 
on the number field or the elliptic curve e.g. \cite[Theorem 4.1]{DF13}, \cite[Theorem 6.2]{FreitasFrey}, \cite{Anni}, \cite{Yoshikawa}, \cite{Zhang}, 
\cite[Theorem 7]{quadmod}. 
The following result
of Freitas and Siksek \cite[Theorem 7]{quadmod} 
shows that is now possible to deduce modularity results
for elliptic curves using relatively elementary
computations.
\begin{thm}[Freitas and Siksek]
	\label{thm:somemodular}
Let $K$ be a totally real field.
Let $p=5$ or $p=7$. Let $\fp$
be a prime ideal of $\OO_K$ above $p$,
and suppose $K$ is unramified at $\fp$.
Let $E$ be an elliptic curve over $K$ that is
 semistable at $\fp$ 
and suppose $\overline{\rho}_{E,p}$ is irreducible.
Then $E$ is modular.
\end{thm}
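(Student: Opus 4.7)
The plan is to reduce to Theorem~\ref{thm:357}. Once we establish that the subgroup
\[
H \; := \; \overline{\rho}_{E,p}(G_K) \cap \SL_2(\F_p)
\]
is absolutely irreducible, that theorem gives the modularity of $E$. First I would observe that $H$ coincides with the image $\overline{\rho}_{E,p}(G_{K(\zeta_p)})$: by Theorem~\ref{thm:detrep} we have $\det\overline{\rho}_{E,p}=\chi_p$, and $\ker\chi_p=G_{K(\zeta_p)}$. So the task becomes to show that $\overline{\rho}_{E,p}\vert_{G_{K(\zeta_p)}}$ is absolutely irreducible.

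I would proceed by contradiction. If this restriction were absolutely reducible, then since $G_{K(\zeta_p)}$ is normal in $G_K$ of index dividing $p-1$ and since $\overline{\rho}_{E,p}$ itself is irreducible by hypothesis, Clifford theory combined with Dickson's classification forces the global image $\overline{\rho}_{E,p}(G_K)$ into the normalizer of a Cartan subgroup, either $N_s^+(p)$ or $N_{ns}^+(p)$. A direct matrix computation then shows that neither normalizer contains a transvection, i.e.\ an element of $\SL_2(\F_p)$ of order $p$: the Cartan subgroups themselves have order coprime to $p$, and every element of the normalizer outside the Cartan has order dividing $2(p\pm1)$. Producing a transvection in the image of $G_K$ would therefore suffice for a contradiction.

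Such a transvection I would try to extract from the local data at $\fp$. If $E$ has multiplicative reduction at $\fp$, then Lemma~\ref{lem:transvection} applies whenever $p\nmid\ord_\fp(j)$ and produces a transvection in $\overline{\rho}_{E,p}(I_\fp)\subseteq H$, as required. The sub-case $p\mid\ord_\fp(j)$ would be dispatched by a direct analysis of the Tate parametrization over the unramified local field $K_\fp$, exploiting the hypothesis that $K$ is unramified at $\fp$. If instead $E$ has good reduction at $\fp$, then unramifiedness of $K$ at $\fp$ together with the theory of fundamental characters pins down $\overline{\rho}_{E,p}\vert_{I_\fp}$ completely; in the ordinary subcase one obtains on inertia the shape $\begin{pmatrix}\chi_p & *\\ 0 & 1\end{pmatrix}$ with non-trivial wild part, again yielding a transvection in $H$.

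The hard part is expected to be the good supersingular case at $\fp$. Here the inertial image is cyclic of order dividing $p+1$, sits inside the non-split Cartan, and contains no transvection, so it is formally compatible with $\overline{\rho}_{E,p}(G_K)\subseteq N_{ns}^+(p)$ and the previous strategy breaks down. The explicit hypothesis $p\in\{5,7\}$ enters essentially here: the small inertial order $p+1\in\{6,8\}$ allows one to combine the constraint on inertia with a Frobenius-at-$\fp$ computation and the identity $\det\overline{\rho}_{E,p}=\chi_p$ to rule out any global image inside $N_{ns}^+(p)$ consistent with the assumption, thus completing the case analysis. Once this last case is dispatched, absolute irreducibility of $\overline{\rho}_{E,p}\vert_{G_{K(\zeta_p)}}$ is established and Theorem~\ref{thm:357} yields modularity of $E$.
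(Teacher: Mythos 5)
Your skeleton matches the paper's: reduce to Theorem~\ref{thm:357}, identify $\overline{\rho}_{E,p}(G_K)\cap\SL_2(\F_p)$ with $\overline{\rho}_{E,p}(G_{K(\zeta_p)})$ via $\det\overline{\rho}_{E,p}=\chi_p$, and observe that failure of absolute irreducibility for this normal subgroup forces the full image into the normalizer of a Cartan. Where you depart from the paper, and where the gaps are, is in the local-at-$\fp$ input you use to get a contradiction. The paper does \emph{not} hunt for a transvection. Because $K_\fp/\Q_p$ is unramified and $E$ is semistable at $\fp$, the image of $I_\fp$ in $\PGL_2(\F_p)$ contains a cyclic subgroup of order $p-1$ (ordinary or multiplicative, coming from $\chi_p\vert_{I_\fp}$ being surjective onto $\F_p^\ast$) or $p+1$ (supersingular, from the level-two fundamental character). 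It is this large prime-to-$p$ cyclic piece that is incompatible with the dihedral/cyclic projective image coming from a Cartan normalizer. An order-$p$ element is a different, weaker piece of local data, and it is simply not there in all of your cases.

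Concretely: in the good ordinary case you assert a ``non-trivial wild part'' of $\begin{pmatrix}\chi_p&*\\0&1\end{pmatrix}$ on inertia, but nothing in the hypotheses forces this; the extension class may be peu ramifi\'e, the wild image trivial, and no transvection exists. In the multiplicative case with $p\mid\ord_\fp(j)$ the unipotent part of the inertial image also vanishes mod $p$, so the ``direct analysis of the Tate parametrization'' you invoke cannot produce a transvection either --- what survives in both cases is the diagonal $\chi_p$-factor of order $p-1$, which is exactly what the paper uses. In the supersingular case you correctly note the inertial image has order $p+1$, coprime to $p$, so the transvection strategy collapses outright; the ``Frobenius-at-$\fp$ computation exploiting $p+1\in\{6,8\}$'' is a gesture rather than an argument, and it also misplaces the role of the hypothesis $p\in\{5,7\}$, which is a constraint inherited from the modularity-lifting and switching input to Theorem~\ref{thm:357}, not a combinatorial rescue of the supersingular sub-case. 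Replacing the search for transvections with the order $p-1$ or $p+1$ cyclic subgroup of the projective inertial image, as in the paper's sketch, treats all three reduction types at once and removes these gaps.
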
 
\begin{proof}
We sketch the idea of the proof.
The fact that $E$ is semistable at $\fp$ (which in turn
is unramified) forces the image of  
$\overline{\rho}_{E,p}(I_\fp)$ in $\PGL_2(\F_p)$
to contain a cyclic subgroup of order $p-1$ or $p+1$.
This, together with the assumption that $\overline{\rho}_{E,p}$
is irreducible, is enough to imply that $\overline{\rho}_{E,p}(G_K) \cap \SL_2(\F_p)$ is absolutely irreducible. Thus, by Theorem~\ref{thm:357},
$E$ is modular.
\end{proof}

\begin{example}
Kraus \cite[Lemma 9]{Kraus19} applied
Theorem~\ref{thm:somemodular} to 
deduce the modularity of the Frey curve (associated 
to a putative solution to the Fermat equation) over 
degree $5$ number field
$K=\Q(\alpha)$ 
where $\alpha^5-6\alpha^3+6\alpha-2=0$.
\end{example}


\part{Newform elimination}\label{sec:Elimination}
The final step in the modular approach is 
to eliminate the Hilbert newforms that arise 
from level-lowering, i.e. to show 
that the isomorphism
\begin{equation}\label{eqn:isom}
\modpg\sim \overline{\rho}_{\mathfrak{f}, \varpi}
\end{equation}
doesn't hold. 
\section{A bound for the exponent}
Recall that in our setting the prime $p$
in relation \eqref{eqn:isom}
is usually the prime exponent in some Diophantine
equation, and the elliptic curve $E$ is the Frey 
curve attached to that Diophantine equation.
In this section we explain a standard method
that is often capable of giving a bound on $p$.
The main idea behind this method essentially goes 
back to Serre \cite{Serre87}.
\begin{lem}\label{lem:newformelim}
Let $K$ be a totally real field, and let $p \ge 5$. 
Let $E$ be an elliptic curve over $K$
of conductor $\cN$,
and let $\ff$ be a newform of parallel weight $2$
and level $\cN_p$ 
given by \eqref{eqn:level}.
Let $t$ be a positive integer satisfying $t \mid \# E(K)_{\mathrm{tors}}$.
Let $\fq\nmid t \mathcal{N}_{p}$ be a prime ideal of $\OO_K$ and let 
\[
\mathcal{A_{\mathfrak{q}}}=\{a\in\Z:\quad \lvert a \rvert\leq 2\sqrt{\Norm(\mathfrak{q})}\;, 
\quad \Norm(\mathfrak{q})+1-a\equiv 0 \pmod{t} \}.
\]
If $\modpg\sim \overline{\rho}_{\mathfrak{f},\varpi}$ 
then $\varpi$ divides the principal ideal 
\[
B_{\mathfrak{f},\mathfrak{q}}=\Norm(\mathfrak{q})(
\Norm(\frak{q})+1)^2-a_{\mathfrak{q}}(\mathfrak{f})^2)\prod\limits_{a\in\mathcal{A_{\mathfrak{q}}}}(a-a_{\mathfrak{q}}(\mathfrak{f}))\cdot \mathcal{O}_{\Q_{\mathfrak{f}}}.
\]
\end{lem}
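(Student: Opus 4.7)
The plan is to compare traces of Frobenius at $\fq$ on both sides of the alleged isomorphism $\overline{\rho}_{E,p}\sim \overline{\rho}_{\ff,\varpi}$. First I dispose of the case $\fq\mid p$: here $\Norm(\fq)$ is a power of $p$ and $\varpi\mid p$, so $\varpi\mid \Norm(\fq)$ and hence $\varpi\mid B_{\ff,\fq}$ automatically. So assume $\fq\nmid p$; combined with $\fq\nmid \cN_p$, the standard properties of the Galois representation attached to a Hilbert newform imply that $\overline{\rho}_{\ff,\varpi}$ is unramified at $\fq$, with Frobenius trace congruent to $a_\fq(\ff)\pmod{\varpi}$.

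I now split on the reduction of $E$ at $\fq$. If $E$ has good reduction, then $\overline{\rho}_{E,p}$ is also unramified at $\fq$ with Frobenius trace congruent to $a_\fq(E)\pmod{p}$, so the isomorphism yields $a_\fq(E)\equiv a_\fq(\ff)\pmod{\varpi}$. The Hasse--Weil bound gives $\lvert a_\fq(E)\rvert\le 2\sqrt{\Norm(\fq)}$, and the hypothesis $\fq\nmid t$ ensures the reduction map is injective on $E(K)[t]$, so $t\mid \#E(\F_\fq)=\Norm(\fq)+1-a_\fq(E)$. Hence $a_\fq(E)\in \mathcal{A}_\fq$, and $\varpi$ divides the factor $\prod_{a\in \mathcal{A}_\fq}(a-a_\fq(\ff))$ of $B_{\ff,\fq}$.

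If instead $E$ has bad reduction at $\fq$, then $\fq\mid \cN$; combined with $\fq\nmid \cN_p$ this forces $\fq\mid \mathcal{M}_p$, so by the definition in \eqref{eqn:level} we have $E$ semistable (hence multiplicative) at $\fq$ with $p\mid v_\fq(\Delta_\fq)=-v_\fq(j)$. Invoking the example following Corollary~\ref{cor:Tate}, after an unramified quadratic twist over $K_\fq$ the curve $E$ becomes a Tate curve with parameter $q$ satisfying $v_\fq(q)=-v_\fq(j)$, so $p\mid v_\fq(q)$. A Hensel's lemma argument (converse in spirit to Lemma~\ref{lem:transvection}) then shows $q^{1/p}\in K_\fq^{\mathrm{unr}}$, so $\overline{\rho}_{E,p}\vert_{I_\fq}=1$, and Frobenius acts on $E[p]$ with trace congruent to $\pm(\chi_p(\mathrm{Frob}_\fq)+1)\equiv \pm(\Norm(\fq)+1)\pmod{p}$. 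The isomorphism then yields $a_\fq(\ff)^2\equiv (\Norm(\fq)+1)^2\pmod{\varpi}$, so $\varpi$ divides $(\Norm(\fq)+1)^2-a_\fq(\ff)^2$.

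In either case $\varpi\mid B_{\ff,\fq}$. The main care goes into the multiplicative reduction case: one must verify that the twist character is genuinely unramified (so that the image of inertia really is trivial), extract the $\pm$ sign correctly by squaring before congruence, and confirm via Hensel that $p\mid v_\fq(q)$ implies $\overline{\rho}_{E,p}$ is unramified at $\fq$. The good reduction case, by contrast, reduces to a routine application of Hasse--Weil and the elementary injectivity of $E(K)[t]\hookrightarrow E(\F_\fq)$ for $\fq\nmid t$.
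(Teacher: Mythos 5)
Your proof is correct and follows essentially the same route as the paper: dispose of $\fq \mid p$ trivially, compare traces of a Frobenius element at $\fq$ under the isomorphism, and split into the good-reduction case (Hasse--Weil bound plus injectivity of $t$-torsion under reduction, giving $a_\fq(E)\in\mathcal{A}_\fq$) and the multiplicative-reduction case (trace $\equiv \pm(\Norm(\fq)+1)$, with the sign removed by squaring). The only difference is that you justify the multiplicative-reduction trace formula in detail via the Tate curve and an unramifiedness argument, where the paper simply asserts it; that added detail is sound.
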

\begin{proof}
Suppose $\overline{\rho}_{E,p} \sim \overline{\rho}_{f,\varpi}$.
Let $\fq \nmid  t\cN_p$. If $\fq \mid p$ then
$\varpi \mid p \mid \Norm(\fq) \mid B_{\ff,\fq}$.
So suppose $\fq \nmid p$.
Let $\sigma_\fq \in G_K$ denote a Frobenius element
corresponding to $\fq$. Then $\overline{\rho}_{E,p}(\sigma_\fq)$
and $\overline{\rho}_{f,\varpi}(\sigma_\fq)$ are similar matrices
in $\GL_2(\F_\varpi)$, where $\F_\varpi=\OO_\ff/\varpi$, and therefore
\[
\Trace(\overline{\rho}_{E,p}(\sigma_\fq)) \equiv 
\Trace(\overline{\rho}_{\ff,\varpi}(\sigma_\fq)) 
\equiv a_\fq(\ff) \pmod{\varpi}.
\]
If $\fq \nmid \cN$ then
\[
\Trace(\overline{\rho}_{E,p}(\sigma_\fq)) \equiv a_\fq(E) \pmod{p}.
\]
We note that $\lvert a_\fq(E) \rvert \le 2 \sqrt{\Norm(\fq)}$
by the Hasse--Weil bounds. Moreover, by the injectivity
of torsion, we have $t \mid \# E(\F_\fq)$ (it is here
that we use the assumption that $\fq \nmid t$).
We recall that $\#E(\F_\fq) = \Norm(\fq)+1-a_\fq(E)$.
Hence $\varpi \mid B_{\ff,\fq}$.

Next suppose $\fq \mid \cN$. It follows
from \eqref{eqn:level} that $\fq \mid\mid \cN$,
or equivalently that $E$ has multiplicative
reduction at $\fq$. Thus,
\[
\Trace(\overline{\rho}_{E,p}(\sigma_\fq)) \equiv \pm (\Norm(\fq)+1) \pmod{p},
\]
and therefore $\varpi \mid B_{\ff,\fq}$.
\end{proof}

Lemma \ref{lem:newformelim} is an explicit method 
to discard the isomorphism $\modpg\sim \overline{\rho}_{\mathfrak{f},\varpi}$ for all but a small finite set of primes. 
Namely let  
\[
B_{\mathfrak{f}}=\sum\limits_{\mathfrak{q}\in T}B_{\mathfrak{f}, \mathfrak{q}},
\]
where $T$ is a 
small set of primes $\mathfrak{q}\nmid t \mathcal{N}_{p}$. 
Let $C_{\mathfrak{f}}=\Norm_{\Q_{\mathfrak{f}}/\Q}(B_{\mathfrak{f}})$. 
Then Lemma \ref{lem:newformelim} asserts that $p\mid C_{\mathfrak{f}}$.

We note that if $\ff$ is irrational (i.e. $\Q_\ff \ne \Q$) then
there exists a prime $\fq$ of $\OO_K$ such that $a_{\fq}(\ff)\notin \Q$. 
In this case $B_{\mathfrak{f}, \mathfrak{q}}\neq 0$ 
and it would then be possible to bound $p$. 
Thus, using Lemma~\ref{lem:newformelim}, 
we can eliminate all the irrational $\ff$ (for sufficiently
large $p$),
and we're left with rational $\ff$ which correspond
to elliptic curves $E^\prime$. In that case the 
relation $\overline{\rho}_{E,p} \sim \overline{\rho}_{\ff,\varpi}$

\medskip

We mention that it is sometimes
infeasible 
to compute all Hilbert newforms of parallel weight $2$ 
and level $\mathcal{N}_{p}$ when $\mathcal{N}_{p}$ is 
large (i.e. the norm of $\mathcal{N}_{p}$ is large).
To work around this limitation, one often works with the
characteristic polynomials of Hecke operatons
acting on the space of cusp forms
of parallel weight $2$ and level $\cN_p$, instead of 
computing the individual Hilbert newforms,
as for example in \cite[Section 6]{BPS19},
and \cite[Section 5]{PhilippeFermat}.

\section{Image of inertia and generalisations} 
Let $K$ be a number field. 
Write $G_{K}=\Gal(\bar{K}/K)$ 
for the absolute Galois group of 
$K$. Let $\mathfrak{q}$ be a prime ideal of $\OO_K$. 
Recall that 
\[
D_{\mathfrak{q}}=\{\sigma\in G_{K}\; : \; \sigma(\mathfrak{q})=\mathfrak{q} \}
\]
is the \textbf{decomposition subgroup} of $G_{K}$ 
at $\mathfrak{q}$ and
\[ 
I_{\mathfrak{q}} \; =\; 
\{\sigma\in D_{\mathfrak{q}}\; : \; 
\fq \mid (\sigma(\alpha)-\alpha) \text{ for all $\alpha \in \OO_K$} \}
\]
is the \textbf{inertia subgroup} of $G_{K}$ 
at $\mathfrak{q}$. 

\medskip

The following theorem is a fairly trivial generalization
of ideas of Kraus \cite[Section 6]{Kraus33p}.
\begin{thm}\label{thm:transvection}
Let $K$ be a number field, and let $E$, $E^\prime$
be elliptic curves over $K$. Let 
$\fq$ be a prime of $\OO_K$. Let $p \ge 5$ be a rational
prime. Suppose $E$ has potentially multiplicative reduction
at $\fq$, and moreover, $p \nmid v_\fq(j)$, where
$j$ is the $j$-invariant of $E$.
Suppose $E^\prime$ has potentially additive reduction at $\fq$.
Then $\overline{\rho}_{E,p} \not \thicksim \overline{\rho}_{E^\prime,p}$.
\end{thm}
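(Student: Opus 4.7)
The plan is to argue by contradiction, comparing the orders of the inertial images at $\fq$. Suppose for contradiction that $\overline{\rho}_{E,p} \sim \overline{\rho}_{E^\prime,p}$. Restricting to the inertia subgroup $I_\fq \subseteq G_K$, we obtain that $\overline{\rho}_{E,p}(I_\fq)$ and $\overline{\rho}_{E^\prime,p}(I_\fq)$ are conjugate subgroups of $\GL_2(\F_p)$; in particular, they have the same cardinality. I will derive a contradiction by showing one of these orders is divisible by $p$ while the other is not.

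The first half is immediate from Lemma~\ref{lem:transvection}. The hypothesis that $E$ has potentially multiplicative reduction at $\fq$ is equivalent to $v_\fq(j(E))<0$, and combined with $p \nmid v_\fq(j(E))$ this yields $p \mid \# \overline{\rho}_{E,p}(I_\fq)$.

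The substantive work is to show $p \nmid \# \overline{\rho}_{E^\prime,p}(I_\fq)$. Since $E^\prime$ has potentially good (i.e.\ potentially additive) reduction at $\fq$, by the Serre--Tate criterion there is a finite extension $L/K_\fq^{\mathrm{unr}}$, of degree coprime to $p$ (indeed dividing $24$ for $p \ge 5$), over which $E^\prime$ acquires good reduction. The inertia subgroup $I_L$ of $L$ then acts trivially on $E^\prime[p]$: when the residue characteristic at $\fq$ is distinct from $p$, this is the Néron--Ogg--Shafarevich criterion; when $\fq \mid p$, one invokes the finite flat group scheme structure of $E^\prime[p]$ over the good-reduction model, as in Kraus \cite{Kraus33p}. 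Consequently, $\overline{\rho}_{E^\prime,p}(I_\fq)$ is a quotient of the finite group $I_\fq/I_L$, whose order divides $[L:K_\fq^{\mathrm{unr}}]$ and is thus coprime to $p$.

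Combining the two halves produces the contradiction. The main obstacle I expect is the case $\fq \mid p$ in the second step, where the naive Néron--Ogg--Shafarevich argument fails; one must appeal to the classification of the restriction of $\overline{\rho}_{E^\prime,p}$ to $I_\fq$ for potentially good reduction at a prime above $p$ (fundamental characters of level $2$ in the supersingular case, an appropriate tame twist in the ordinary case) to confirm that the image remains of order coprime to $p$. The rest of the argument is formal comparison of cardinalities.
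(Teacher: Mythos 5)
Your proposal follows essentially the same line as the paper's proof: compare the orders of the inertial images $\overline{\rho}_{E,p}(I_\fq)$ and $\overline{\rho}_{E^\prime,p}(I_\fq)$, using Lemma~\ref{lem:transvection} for the potentially multiplicative side and a Kraus--Serre--Tate bound $\#\overline{\rho}_{E^\prime,p}(I_\fq)\mid 24$ for the potentially good side, then note that $p\nmid 24$. The paper simply cites Kraus \cite{Kraus90} for the second bound, whereas you re-derive it via the semistability defect and N\'eron--Ogg--Shafarevich.

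One remark on the caveat you raise: you are right to be uneasy about $\fq\mid p$, but your proposed repair would not go through in general. When $\fq\mid p$ it is \emph{not} true that the image of inertia for a potentially good reduction curve is always of order coprime to $p$; for example a ``tr\`es ramifi\'ee'' ordinary representation has image of order divisible by $p$. The paper's own proof (and Kraus's bound) implicitly assumes $\fq\nmid p$, as does the application in the paper (there $\fq\mid 2$ and $p\ge 17$). So your instinct that the statement silently needs $\fq\nmid p$ is correct; the theorem as literally stated has a hidden hypothesis that neither you nor the paper can remove by the ``order coprime to $p$'' route.
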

\begin{proof}
Kraus \cite{Kraus90} determined the possibilities
for the group $\overline{\rho}_{E^\prime,p}(I_\fq)$
for elliptic curves $E^\prime$ with potentially good reduction
at $\fq$.
In particular, $\# \overline{\rho}_{E^\prime,p}(I_\fq) \mid 24$.

However, as $E$ has potentially multiplicative reduction at $\fq$,
and $p \nmid v_\fq(j)$, we know that $p \mid \# \overline{\rho}_{E,p}(I_\fq)$
by Lemma~\ref{lem:transvection}. 
But $p \nmid 24$ as $p \ge 5$. Hence $\overline{\rho}_{E,p} \not \thicksim
\overline{\rho}_{E^\prime,p}$.
\end{proof}

\begin{example}
In \cite{FLTsmall}, the authors
prove Fermat's Last Theorem for several real quadratic
fields. We give some 
of their details for $K=\Q(\sqrt{3})$. 
Suppose $(a,b,c) \in \OO_K^3$
is a non-trivial solution to the Fermat equation $a^p+b^p+c^p=0$
(non-trivial means $abc \ne 0$). As $\OO_K$ has class number $1$,
we may suppse that $\gcd(a,b,c)=1$. Let $E$ be the Frey curve
attached to this solution. It is shown in \cite{FLTsmall}
that $\overline{\rho}_{E,p}$ is irreducible for $p \ge 17$,
and we focus on this case.
It turns out,  after suitably permuting $(a,b,c)$
and scaling by a unit, that the level $\cN_p$
is either $\fq$ or $\fq^4$, where $\fq=(1+\sqrt{3})\OO_K$ 
is the unique prime ideal above $2$ (note $2 \OO_K=\fq^2$). 
There are no newforms at level $\fq$, so we need only
consider $\cN_p=\fq^4$. For the level $\cN_p=\fq^4$
there is only one newform, and this has rational Hecke eigenvalues,
and therefore corresponds to an elliptic curve. This elliptic curve
is
\[
E^\prime \; : \; y^2=x(x+1)(x+8+4\sqrt{3}).
\]
Thus $\overline{\rho}_{E,p} \thicksim \overline{\rho}_{E^\prime,p}$.
However, the $j$-invariant of $E^\prime$ is $j^\prime=54000$;
as $v_\fq(j^\prime) \ge 0$, the elliptic curve $E^\prime$ has
potentially good reduction at $\fq$.

Now we want to study $v_\fq(j)$ where $j$ is the $j$-invariant 
of $E$. 
As $a$, $b$, $c$ are coprime, at most one of them is divisible by
$\fq$. However, $\F_\fq=\OO_K/\fq=\F_2$. If $a$, $b$, $c$
are not divisible by $\fq$, then reducing $a^p+b^p+c^p=0$
modulo $\fq$ gives $1+1+1 \equiv 0$ in $\F_2$ which is a contradiction.
Hence $\fq$ divides exactly one of $a$, $b$, $c$.
For illustration, let's say that $v_\fq(a)=t>0$, and $v_\fq(b)=v_\fq(c)=0$.
Recall,
\[
j=256 \frac{(c^{2p}-a^p b^p)^3}{a^{2p}b^{2p} c^{2p}}.
\]
Thus,
\[
v_\fq(j)=v_\fq(256)-2p t=16-2pt.
\]
Hence, as $p \ge 17$, we have $v_\fq(j)<0$ (i.e. $E$ has
potentially mutliplicative reduction at $\fq$) 
and $p \nmid v_\fq(j)$.
It follows from Theorem~\ref{thm:transvection} that $\overline{\rho}_{E,p}
\not \thicksim \overline{\rho}_{E^\prime,p}$.
\end{example}

For other examples of image of inertia arguments,
see
\cite[Proposition 4.4]{BennettSkinner},
\cite[Section 3]{FS15},
\cite[pages 5--6]{BCDF19}.

\part{The Fermat equation over totally real fields}
Let $n\geq 3$ be an integer. We write $F_n$ for the $n$-th
Fermat curve
\begin{equation}
	\label{eq:Fermat}
F_n \; : \;	x^n+y^n=z^n.
\end{equation} 
The plane curve $F_n$ has genus $(n-1)(n-2)/2$.
Thus, for $n \ge 4$, the genus is $\ge 3$,
and so by Faltings' theorem \cite{Faltings},
$F_n(K)$ is finite for any number field $K$.
Moreover,  
by a theorem of Debarre and Klassen \cite[Theorem 1]{DK94},
for $n\geq 7$, the Fermat curve $F_{n}$ has only 
finitely many points defined over number fields of degree $\leq n-2$. 
We conclude the survey with an overview 
of results on the Fermat equation 
by the modular approach, and other approaches.

\medskip 

\section{Small exponents} 
The third Fermat curve $F_3$ is an elliptic curve, and therefore,
by the Mordell--Weil theorem,
$F_3(K)$ is a finitely generated abelian group.
Although, $F_3(\Q)$ consists of three obvious (trivial) solutions,
for many number fields $F_3(K)$ has positive rank, and is therefore
infinite. Thus, generally one considers the Fermat equation
with $n \ge 4$. To rule out non-trivial solutions for all $n \ge 4$,
it is sufficient to do this for $n=4$, $6$, $9$, and for prime $n\ge 5$.
%
%
Solutions to \eqref{eq:Fermat} are fairly well-understood 
when both $n$ and the degree of $K$ are small; 
we summarise some of the key results in Table \ref{tab:smallexp}.
\begin{table}
\begin{center}
\begin{tabular}{ |c|c|c|c| } 
 \hline
 $n$ & $d$ & References & Results \\
  \hline
  $3$ & $3$ & 
    \makecell{
  Bremner and\\
  Choudhry \cite{Bremner20}
  } & 
\makecell{
Analysis of Galois groups\\
of cubic points on $F_{3}$
}  
  \\
  \hline
  $4$ & $3$, $4$ &
  \makecell{
  Bremner and\\
  Choudhry \cite{Bremner20}
  } & \makecell{Analysis of Galois groups\\ of cubic and quartic points
on $F_4$}\\
  \hline
  $4$, $6$, $9$ & $2$ & Aigner \cite{Ai}, \cite{Aigner57} & 
\makecell{Complete determination of quadratic\\ points on $F_4$, $F_6$ and $F_9$} \\
  \hline
    $4$ & $2$, $3$ & Mordell \cite{Mordell} & 
\makecell{Complete determination of quadratic\\ and
cubic points on $F_4$}\\
   \hline
  $5$ & $\leq 6$ & 
  \makecell{Klassen and\\ 
  Tzermias \cite{Klassen97}} & 
  \makecell{Description of points of degree\\
  $\le 6$ on $F_{5}$}\\
  \hline
  $5$ & $7\leq d\leq 12$ & 
  \makecell{Top and\\
  Sall \cite{Top15}}& 
  \makecell{Description of points of degree\\
  $7 \le d \le 12$ on $F_{5}$}\\
  \hline
  $5$ & $4$ & Kraus \cite{Kraus18} & 
  \makecell{Analysis of Galois groups of\\
    quartic points on $F_{5}$}\\
  \hline
  $5, 7, 11$ & $d\leq (n-1)/2$ & 
  \makecell{Gross and \\
  Rohrlich \cite{Gross78}} & 
  \makecell{Complete determination of degree\\
  $d$ points on $F_{n}$}\\
  \hline
   $13$ & $2$, $3$ & Tzermias \cite{Tzermias04}& 
   \makecell{
Determination of quadratic points on $F_{13}$,\\
and an upper bound for the number \\
of cubic points}\\
  \hline
\end{tabular}
	\caption{This table summarises some key references concerning the Fermat equation \eqref{eq:Fermat} with exponent $n$ over degree $d$ number fields for small $n$ and $d$.}
\end{center}
	\label{tab:smallexp}
\end{table}

\section{Real quadratic fields}

In order to provide a complete resolution of the Fermat equation 
\eqref{eq:Fermat} for all integers $n\geq 4$ over a real quadratic field, by the references in Table \ref{tab:smallexp}, one can assume that the exponent $n=p\geq 17$ is prime.
\medskip

The first generalisation of Wiles' proof of Fermat's last theorem 
over $\Q$ \cite{Wiles} is due to Jarvis and Meekin \cite{Frazer04} who prove that 
there are no non-trivial solutions to \eqref{eq:Fermat} over $\Q(\sqrt{2})$ for all integers $n\geq 4$. 
There are several numerical similarities between the proof of 
Fermat's last theorem over $\Q$ and $\Q(\sqrt{2})$. 
For example, the Frey curve $E$ is semistable over $\Q$ and $\Q(\sqrt{2})$; where 
\[
E:\; Y^2=X(X-a^p)(X+b^p)
\]
is the elliptic curve constructed by Hellegouarch \cite{Hellegouarch}. 
Modularity of the Frey curve $E$ over $\Q(\sqrt{2})$ is a 
direct consequence of prior work of Jarvis and Manoharmayum \cite{JM08}. 
Recall that for $K=\Q$, level-lowering implies the existence of a (classical) newform $f$ of weight $2$ and level $2$ such that $\modpg\sim \overline{\rho}_{f,p}$ -- this leads to a contradiction since there 
are no (classical) newforms of weight $2$ and level $2$. 
Similarly for $K=\Q(\sqrt{2})$, level-lowering asserts the existence of a 
Hilbert newform $\mathfrak{f}$ of parallel weight $2$ and level $\mathfrak{P}$
such that $\modpg\sim \overline{\rho}_{\mathfrak{f},\varpi}$ where $\varpi\mid
p$ and $\mathfrak{P}$ is the unique prime above $2$. There are no Hilbert
newforms of level $\mathfrak{P}$ 
and parallel weight $2$ over $K$.  It is natural to wonder
whether these circumstances are a regular occurence. On the contrary, Jarvis
and Meekin prove that this numerology 
only holds for $K=\Q(\sqrt{2})$ and no other real quadratic field \cite[page 194]{Frazer04}. 

The next breakthrough was due to Freitas and Siksek \cite{FLTsmall} who prove that there are no non-trivial solutions to \eqref{eq:Fermat} over $K=\Q(\sqrt{d})$ for squarefree $3\leq d\leq 26,\; d\neq 5, 17$ for all integers $n\geq 4$. 
Modularity of the Frey curve $E$ over $K$ follows 
immediately from prior work of Freitas, Le Hung and Siksek \cite{quadmod}. 
In order to 
show that $\modpg$ is irreducible for all primes $p\geq 17$ the 
authors use a combination of arguments relating to ray class groups, image of inertia, torsion primes on elliptic curves over number fields of small degree and quadratic points on $X_{0}(34)$. 
In order to eliminate the Hilbert newforms 
resulting from level-lowering the authors 
apply a version of
Lemma \ref{lem:newformelim} -- this yields the bound $p\leq 13$.

This was followed by work of Michaud-Jacobs \cite{PhilippeFermat} 
who proved that there are no non-trivial solutions to the Fermat equation 
\eqref{eq:Fermat} over $K=\Q(\sqrt{d})$ for most squarefree $26\leq d\leq 94$ for all integers $n\geq 4$. 
To deal with the increasing dimension of the 
spaces of Hilbert newforms arising from level-lowering, 
Michaud-Jacobs developed a method of elimination which avoided 
the computation of the full space of Hilbert newforms (see Section \ref{sec:Elimination}).

\section{Totally real cubic fields}
Let $K$ be the totally real cubic field 
with discriminant $148$ or $404$ or $564$. 
Kraus \cite[Theorem 6]{Kraus19} proved that there are 
no non-trivial solutions to \eqref{eq:Fermat} 
over $K$ for prime $p\geq 5$. 
Note that this preceded the work of Derickx, Najman and Siksek 
\cite{cubmod} in which they establish the modularity of elliptic curves over 
totally real cubic fields. 
Instead Kraus applies a criteria of Freitas and Siksek 
(Theorem \ref{thm:somemodular}) to assert the modularity 
of the Frey curve $E$ over $K$.  
Kraus shows that if $\modpg$ is reducible then $E$ has a $K$-rational point of order $p$ or $p\mid D_{K}R_{K}$ where $D_{K}$ is the 
absolute discriminant of $K$ and $R_{K}$ is a computable 
constant depending only on $K$. 
In the first case $p\leq 13$ by Parent's bound \cite{Parent17} and in the 
latter case Kraus uses ray class groups to establish the existence of 
an elliptic curve with a $K$-rational point of order $p$. 
To eliminate the Hilbert newforms arising from level-lowering, Kraus also applies a version of Lemma \ref{lem:newformelim}.


Let $K$ be one of the three cubic fields above.
We show that $F_{4}(K)=F_{4}(\Q)$ using another elementary method. Let 
\[
E^\prime \; : \; Y^2=X(X^2-4).
\]
This is the elliptic curve with Cremona label 
\texttt{64a1}. 
Let $\pi:F_{4}\rightarrow E^\prime$ be 
the map given by 
\[
	\pi: F_{4} \rightarrow E^\prime\quad 
	(x,y,z) \mapsto 
	\left(\frac{z^4}{x^2y^2}, \frac{z^2(x^4-y^4)}{x^3 y^3}\right).
\]
It is straightforward to check using \texttt{Magma} that
\[
E^\prime(K)=E^\prime(\Q)=\{0_{E}, (0,0), (-2,0), (2,0)\}.
\]
Then since 
\[
\pi(F_{4}(K))\subseteq E^\prime(K)=E^\prime(\Q),
\]
it is easy to see that $F_{4}(K)=F_{4}(\Q)$. 
In particular this provides a slightly improved bound on 
the exponent in the previously mentioned result of 
Kraus. 

\section{Higher degree totally real fields}
Let $K=\Q(\zeta_{16})^{+}$ where $\zeta_{16}$ denotes a 
primitive $16^{th}$ root of unity. 
Note that $K$ is a degree $4$ number field 
with Galois group $C_{4}$.
Kraus \cite[Theorem 9]{Kraus19} proved that there are no solutions 
to \eqref{eq:Fermat} over $K$ for 
prime $p\geq 5$. 
The modularity of the Frey curve $E$ over $K$ 
follows immediately from a breakthrough 
result of Thorne \cite[Theorem 1]{Thornemodular}. 
As before, Kraus shows that if $\modpg$ is reducible 
then $p\mid D_{K}R_{K}$ or $E$ has a $K$-rational point of 
order $p$. 
In the first case $p=13$ and in the second it follows 
immediately from \cite{DKSS} that $p\leq 17$. 
To show that $\overline{\rho}_{E,17}$ is irreducible, Kraus 
observes that $X_{0}(17)(K)=X_{0}(17)(\Q)$. 
There are two non-cuspidal points in $X_{0}(17)(\Q)$, corresponding 
to two elliptic curves $E_{1}$ and $E_{2}$ possessing
a $\Q$-rational $17$-isogeny. 
Kraus shows that the $\mathfrak{P}$-adic valuation of $j(E)$ and 
$j(E_{i})$ are distinct for $i=1$, $2$, 
where $\mathfrak{P}$ is the unique prime above $2$. It follows
from this that $\overline{\rho}_{E,p}$ is irreducible, for $p \ge 17$.
The relevant space of Hilbert newforms has dimension $0$, for $p \ge 17$.
It remains to show that 
$\overline{\rho}_{E,13}\not\sim \overline{\rho}_{\mathfrak{f},\varpi}$, where
$\mathfrak{f}$ has level $\mathfrak{P}^r$ for $r\in\{5,6,8\}$ and $\varpi\mid
13$.  To do so, Kraus shows that $a_{\mathfrak{q}}(\mathfrak{f})\not\equiv
a_{\mathfrak{q}}(E)$ (mod $\mathfrak{p}$) where $\mathfrak{q}\mid 79$. 

See \cite{Fermat23} for a resolution of \eqref{eq:Fermat} over $K=\Q(\sqrt{2},\sqrt{3})$ for all integers $n\geq 4$, 
where the degree $4$ number field $K$ has Galois group $V_{4}$.

\nocite{NicolasGithub}
\bibliographystyle{abbrv}
\bibliography{Survey}

\begin{thebibliography}{100}

\bibitem{Adzaga}
N.~Ad$\check{z}$aga, T.~Keller, P.~Michaud-Jacobs, F.~Najman, E.~Ozman, and
  B.~Vukorepa.
\newblock Computing quadratic points on modular curves {$X_{0}(N)$}.
\newblock {\em Mathematics of Computation}, Oct. 2023.

\bibitem{Ai}
A.~Aigner.
\newblock {\"U}ber die {M{\"o}glichkeit} von {{\(x^4 + y^4 = z^4\)}} in
  quadratischen {K{\"o}rpern}.
\newblock {\em Jahresber. Dtsch. Math.-Ver.}, 43:226--228, 1934.

\bibitem{Aigner57}
A.~Aigner.
\newblock Die {U}nm\"{o}glichkeit von {$x^6+y^6=z^6$} und {$x^9+y^9=z\sp 9$} in
  quadratischen {K}\"{o}rpern.
\newblock {\em Monatsh. Math.}, 61:147--150, 1957.

\bibitem{Alfaraj23}
A.~Alfaraj.
\newblock On the finiteness of perfect powers in elliptic divisibility
  sequences.
\newblock {\em J. Th\'{e}or. Nombres Bordeaux}, 35(1):247--258, 2023.

\bibitem{Anni}
S.~Anni and S.~Siksek.
\newblock Modular elliptic curves over real abelian fields and the generalized
  {F}ermat equation {$x^{2\ell}+y^{2m}=z^p$}.
\newblock {\em Algebra Number Theory}, 10(6):1147--1172, 2016.

\bibitem{Banwait22}
B.~S. Banwait and M.~Derickx.
\newblock Explicit isogenies of prime degree over number fields, 2022.
\newblock \url{https://arxiv.org/abs/2203.06009}.

\bibitem{banwait2022cyclic}
B.~S. Banwait, F.~Najman, and O.~Padurariu.
\newblock Cyclic isogenies of elliptic curves over fixed quadratic fields,
  2022.
\newblock \url{https://arxiv.org/abs/2206.08891}.

\bibitem{Bars}
F.~Bars.
\newblock Bielliptic modular curves.
\newblock {\em J. Number Theory}, 76(1):154--165, 1999.

\bibitem{Bennett21}
M.~A. Bennett.
\newblock Integers represented by {$x^4-y^4$} revisited.
\newblock {\em Bull. Aust. Math. Soc.}, 103(1):38--49, 2021.

\bibitem{BC12}
M.~A. Bennett and I.~Chen.
\newblock Multi-{F}rey {$\Bbb Q$}-curves and the {D}iophantine equation
  {$a^2+b^6=c^n$}.
\newblock {\em Algebra Number Theory}, 6(4):707--730, 2012.

\bibitem{BCDY14}
M.~A. Bennett, I.~Chen, S.~R. Dahmen, and S.~Yazdani.
\newblock On the equation {$a^3+b^{3n}=c^2$}.
\newblock {\em Acta Arith.}, 163(4):327--343, 2014.

\bibitem{BDSS15}
M.~A. Bennett, S.~R. Dahmen, M.~Mignotte, and S.~Siksek.
\newblock Shifted powers in binary recurrence sequences.
\newblock {\em Math. Proc. Cambridge Philos. Soc.}, 158(2):305--329, 2015.

\bibitem{BMS23}
M.~A. Bennett, P.~Michaud-Jacobs, and S.~Siksek.
\newblock {$\Bbb Q$}-curves and the {L}ebesgue-{N}agell equation.
\newblock {\em J. Th\'{e}or. Nombres Bordeaux}, 35(2):495--510, 2023.

\bibitem{BPS16}
M.~A. Bennett, V.~Patel, and S.~Siksek.
\newblock Superelliptic equations arising from sums of consecutive powers.
\newblock {\em Acta Arith.}, 172(4):377--393, 2016.

\bibitem{BPS17}
M.~A. Bennett, V.~Patel, and S.~Siksek.
\newblock Perfect powers that are sums of consecutive cubes.
\newblock {\em Mathematika}, 63(1):230--249, 2017.

\bibitem{BPS19}
M.~A. Bennett, V.~Patel, and S.~Siksek.
\newblock Shifted powers in {L}ucas-{L}ehmer sequences.
\newblock {\em Res. Number Theory}, 5(1):Paper No. 15, 27, 2019.

\bibitem{BennettSkinner}
M.~A. Bennett and C.~M. Skinner.
\newblock Ternary {D}iophantine equations via {G}alois representations and
  modular forms.
\newblock {\em Canad. J. Math.}, 56(1):23--54, 2004.

\bibitem{BVY04}
M.~A. Bennett, V.~Vatsal, and S.~Yazdani.
\newblock Ternary {D}iophantine equations of signature {$(p,p,3)$}.
\newblock {\em Compos. Math.}, 140(6):1399--1416, 2004.

\bibitem{NicolasGithub}
N.~Billerey, I.~Chen, L.~Dieulefait, and N.~Freitas.
\newblock Supporting notes for the paper \cite{DarmonI}.
\newblock \url{https://github.com/NicolasBillerey/xhyper}.

\bibitem{BCDF19}
N.~Billerey, I.~Chen, L.~Dieulefait, and N.~Freitas.
\newblock A multi-{F}rey approach to {F}ermat equations of signature
  {$(r,r,p)$}.
\newblock {\em Trans. Amer. Math. Soc.}, 371(12):8651--8677, 2019.

\bibitem{DarmonI}
N.~Billerey, I.~Chen, L.~Dieulefait, and N.~Freitas.
\newblock On {D}armon's program for the generalized {F}ermat equation, {I},
  2023.
\newblock \url{https://arxiv.org/abs/2205.15861}.

\bibitem{DarmonII}
N.~Billerey, I.~Chen, L.~Dieulefait, and N.~Freitas.
\newblock On {D}armon's program for the generalized {F}ermat equation, {II},
  2023.
\newblock \url{https://arxiv.org/abs/2308.07062}.

\bibitem{BiluParent}
Y.~Bilu and P.~Parent.
\newblock Serre's uniformity problem in the split {C}artan case.
\newblock {\em Ann. of Math. (2)}, 173(1):569--584, 2011.

\bibitem{Magma}
W.~Bosma, J.~Cannon, and C.~Playoust.
\newblock The {M}agma algebra system. {I}. {T}he user language.
\newblock volume~24, pages 235--265. 1997.
\newblock Computational algebra and number theory (London, 1993).

\bibitem{BELOV}
A.~Bourdon, O.~Ejder, Y.~Liu, F.~Odumodu, and B.~Viray.
\newblock On the level of modular curves that give rise to isolated
  {$j$}-invariants.
\newblock {\em Adv. Math.}, 357:106824, 33, 2019.

\bibitem{Box}
J.~Box.
\newblock Quadratic points on modular curves with infinite {M}ordell-{W}eil
  group.
\newblock {\em Math. Comp.}, 90(327):321--343, 2021.

\bibitem{quarticmod}
J.~Box.
\newblock Elliptic curves over totally real quartic fields not containing
  {$\sqrt{5}$} are modular.
\newblock {\em Trans. Amer. Math. Soc.}, 375(5):3129--3172, 2022.

\bibitem{Box23}
J.~Box, S.~Gajovi\'{c}, and P.~Goodman.
\newblock Cubic and quartic points on modular curves using generalised
  symmetric {C}habauty.
\newblock {\em Int. Math. Res. Not. IMRN}, (7):5604--5659, 2023.

\bibitem{Bremner20}
A.~Bremner and A.~Choudhry.
\newblock The {F}ermat cubic and quartic curves over cyclic fields.
\newblock {\em Period. Math. Hungar.}, 80(2):147--157, 2020.

\bibitem{Taylor01}
C.~Breuil, B.~Conrad, F.~Diamond, and R.~Taylor.
\newblock On the modularity of elliptic curves over {$\bold Q$}: wild 3-adic
  exercises.
\newblock {\em J. Amer. Math. Soc.}, 14(4):843--939, 2001.

\bibitem{BD14}
C.~Breuil and F.~Diamond.
\newblock Formes modulaires de {H}ilbert modulo {$p$} et valeurs d'extensions
  entre caract\`eres galoisiens.
\newblock {\em Ann. Sci. \'{E}c. Norm. Sup\'{e}r. (4)}, 47(5):905--974, 2014.

\bibitem{BruinNajman}
P.~Bruin and F.~Najman.
\newblock Hyperelliptic modular curves {$X_0(N)$} and isogenies of elliptic
  curves over quadratic fields.
\newblock {\em LMS J. Comput. Math.}, 18(1):578--602, 2015.

\bibitem{BLMS08}
Y.~Bugeaud, F.~Luca, M.~Mignotte, and S.~Siksek.
\newblock Almost powers in the {L}ucas sequence.
\newblock {\em J. Th\'{e}or. Nombres Bordeaux}, 20(3):555--600, 2008.

\bibitem{FibLuc}
Y.~Bugeaud, M.~Mignotte, and S.~Siksek.
\newblock Classical and modular approaches to exponential {D}iophantine
  equations. {I}. {F}ibonacci and {L}ucas perfect powers.
\newblock {\em Ann. of Math. (2)}, 163(3):969--1018, 2006.

\bibitem{BMS08}
Y.~Bugeaud, M.~Mignotte, and S.~Siksek.
\newblock A multi-{F}rey approach to some multi-parameter families of
  {D}iophantine equations.
\newblock {\em Canad. J. Math.}, 60(3):491--519, 2008.

\bibitem{CN23}
A.~Caraiani and J.~Newton.
\newblock On the modularity of elliptic curves over imaginary quadratic fields,
  2023.
\newblock \url{https://arxiv.org/abs/2301.10509}.

\bibitem{Chen22}
I.~Chen and A.~Koutsianas.
\newblock A modular approach to {F}ermat equations of signature $(p,p,5)$ using
  {F}rey hyperelliptic curves, 2022.
\newblock \url{https://arxiv.org/abs/2210.02316}.

\bibitem{SS18}
M.~H. \c{S}eng\"{u}n and S.~Siksek.
\newblock On the asymptotic {F}ermat's last theorem over number fields.
\newblock {\em Comment. Math. Helv.}, 93(2):359--375, 2018.

\bibitem{Turcas18}
G.~C. \c{T}urca\c{s}.
\newblock On {F}ermat's equation over some quadratic imaginary number fields.
\newblock {\em Res. Number Theory}, 4(2):Paper No. 24, 16, 2018.

\bibitem{Turcas20}
G.~C. \c{T}urca\c{s}.
\newblock On {S}erre's modularity conjecture and {F}ermat's equation over
  quadratic imaginary fields of class number one.
\newblock {\em J. Number Theory}, 209:516--530, 2020.

\bibitem{Darmon00}
H.~Darmon.
\newblock Rigid local systems, {H}ilbert modular forms, and {F}ermat's last
  theorem.
\newblock {\em Duke Math. J.}, 102(3):413--449, 2000.

\bibitem{David12}
A.~David.
\newblock Crit\`ere d'irr\'eductibilit\'e pour les courbes elliptiques
  semi-stables sur un corps de nombres, 2012.
\newblock \url{https://arxiv.org/abs/1202.1649}.

\bibitem{DK94}
O.~Debarre and M.~J. Klassen.
\newblock Points of low degree on smooth plane curves.
\newblock {\em J. Reine Angew. Math.}, 446:81--87, 1994.

\bibitem{DembeleVoight}
L.~Demb\'{e}l\'{e} and J.~Voight.
\newblock Explicit methods for {H}ilbert modular forms.
\newblock In {\em Elliptic curves, {H}ilbert modular forms and {G}alois
  deformations}, Adv. Courses Math. CRM Barcelona, pages 135--198.
  Birkh\"{a}user/Springer, Basel, 2013.

\bibitem{DKSS}
M.~Derickx, S.~Kamienny, W.~Stein, and M.~Stoll.
\newblock Torsion points on elliptic curves over number fields of small degree.
\newblock {\em Algebra Number Theory}, 17(2):267--308, 2023.

\bibitem{cubmod}
M.~Derickx, F.~Najman, and S.~Siksek.
\newblock Elliptic curves over totally real cubic fields are modular.
\newblock {\em Algebra Number Theory}, 14(7):1791--1800, 2020.

\bibitem{DerickxOrlic}
M.~Derickx and P.~Orli\'{c}.
\newblock Modular curves {$X_0(N)$} with infinitely many quartic points, 2023.
\newblock \url{https://arxiv.org/abs/2308.11694}.

\bibitem{Diamond}
F.~Diamond and J.~Shurman.
\newblock {\em A first course in modular forms}, volume 228 of {\em Graduate
  Texts in Mathematics}.
\newblock Springer-Verlag, New York, 2005.

\bibitem{DF13}
L.~Dieulefait and N.~Freitas.
\newblock Fermat-type equations of signature {$(13,13,p)$} via {H}ilbert
  cuspforms.
\newblock {\em Math. Ann.}, 357(3):987--1004, 2013.

\bibitem{DF14}
L.~Dieulefait and N.~Freitas.
\newblock The {F}ermat-type equations {$x^5+y^5=2z^p$} or {$3z^p$} solved
  through {$\Bbb{Q}$}-curves.
\newblock {\em Math. Comp.}, 83(286):917--933, 2014.

\bibitem{DGMP23}
A.~Dujella, K.~Gy\H{o}ry, P.~Michaud-Jacobs, and A.~Pint\'{e}r.
\newblock On power values of pyramidal numbers, {II}.
\newblock {\em Acta Arith.}, 208(3):199--213, 2023.

\bibitem{Faltings}
G.~Faltings.
\newblock Endlichkeitss\"{a}tze f\"{u}r abelsche {V}ariet\"{a}ten \"{u}ber
  {Z}ahlk\"{o}rpern.
\newblock {\em Invent. Math.}, 73(3):349--366, 1983.

\bibitem{FreitasFrey}
N.~Freitas.
\newblock Recipes to {F}ermat-type equations of the form {$x^r+y^r=Cz^p$}.
\newblock {\em Math. Z.}, 279(3-4):605--639, 2015.

\bibitem{quadmod}
N.~Freitas, B.~V. Le~Hung, and S.~Siksek.
\newblock Elliptic curves over real quadratic fields are modular.
\newblock {\em Invent. Math.}, 201(1):159--206, 2015.

\bibitem{FNS20}
N.~Freitas, B.~Naskr\k{e}cki, and M.~Stoll.
\newblock The generalized {F}ermat equation with exponents {$2$}, {$3$}, {$n$}.
\newblock {\em Compos. Math.}, 156(1):77--113, 2020.

\bibitem{FS15}
N.~Freitas and S.~Siksek.
\newblock The asymptotic {F}ermat's last theorem for five-sixths of real
  quadratic fields.
\newblock {\em Compos. Math.}, 151(8):1395--1415, 2015.

\bibitem{Freitas15}
N.~Freitas and S.~Siksek.
\newblock Criteria for irreducibility of {${\rm mod}\, p$} representations of
  {F}rey curves.
\newblock {\em J. Th\'{e}or. Nombres Bordeaux}, 27(1):67--76, 2015.

\bibitem{FLTsmall}
N.~Freitas and S.~Siksek.
\newblock Fermat's last theorem over some small real quadratic fields.
\newblock {\em Algebra Number Theory}, 9(4):875--895, 2015.

\bibitem{Fujiwara}
K.~Fujiwara.
\newblock Level optimization in the totally real case, 2006.
\newblock \url{https://arxiv.org/abs/0602586}.

\bibitem{Gross78}
B.~H. Gross and D.~E. Rohrlich.
\newblock Some results on the {M}ordell-{W}eil group of the {J}acobian of the
  {F}ermat curve.
\newblock {\em Invent. Math.}, 44(3):201--224, 1978.

\bibitem{HS}
J.~Harris and J.~Silverman.
\newblock Bielliptic curves and symmetric products.
\newblock {\em Proc. Amer. Math. Soc.}, 112(2):347--356, 1991.

\bibitem{Hellegouarch}
Y.~Hellegouarch.
\newblock Points d'ordre {$2p\sp{h}$} sur les courbes elliptiques.
\newblock {\em Acta Arith.}, 26(3):253--263, 1974/75.

\bibitem{HwangJeon}
W.~Hwang and D.~Jeon.
\newblock Modular curves with infinitely many quartic points.
\newblock {\em Math. Comp.}, 93(345):383--395, 2024.

\bibitem{IKO20}
E.~I\c{s}ik, Y.~Kara, and E.~Ozman.
\newblock On ternary {D}iophantine equations of signature {$(p, p, 2)$} over
  number fields.
\newblock {\em Turkish J. Math.}, 44(4):1197--1211, 2020.

\bibitem{IIY22}
Y.~Ishitsuka, T.~Ito, and S.~Yoshikawa.
\newblock The modularity of elliptic curves over all but finitely many totally
  real fields of degree 5.
\newblock {\em Res. Number Theory}, 8(4):Paper No. 82, 23, 2022.

\bibitem{IKO23}
E.~Isik, Y.~Kara, and E.~Ozman.
\newblock On ternary {D}iophantine equations of signature {$(p,p,3)$} over
  number fields.
\newblock {\em Canad. J. Math.}, 75(4):1293--1313, 2023.

\bibitem{FrazerMod}
F.~Jarvis.
\newblock Correspondences on {S}himura curves and {M}azur's principle at {$p$}.
\newblock {\em Pacific J. Math.}, 213(2):267--280, 2004.

\bibitem{JM08}
F.~Jarvis and J.~Manoharmayum.
\newblock On the modularity of supersingular elliptic curves over certain
  totally real number fields.
\newblock {\em J. Number Theory}, 128(3):589--618, 2008.

\bibitem{Frazer04}
F.~Jarvis and P.~Meekin.
\newblock The {F}ermat equation over {${\Bbb Q}(\sqrt{2})$}.
\newblock {\em J. Number Theory}, 109(1):182--196, 2004.

\bibitem{Jeon21}
D.~Jeon.
\newblock Modular curves with infinitely many cubic points.
\newblock {\em J. Number Theory}, 219:344--355, 2021.

\bibitem{Kalyanswamy}
S.~Kalyanswamy.
\newblock Remarks on automorphy of residually dihedral representations.
\newblock {\em Math. Res. Lett.}, 25(4):1285--1304, 2018.

\bibitem{Kamienny}
S.~Kamienny.
\newblock Torsion points on elliptic curves and {$q$}-coefficients of modular
  forms.
\newblock {\em Invent. Math.}, 109(2):221--229, 1992.

\bibitem{KO20}
Y.~Kara and E.~Ozman.
\newblock Asymptotic generalized {F}ermat's last theorem over number fields.
\newblock {\em Int. J. Number Theory}, 16(5):907--924, 2020.

\bibitem{Kenku82}
M.~A. Kenku.
\newblock On the number of {${\bf Q}$}-isomorphism classes of elliptic curves
  in each {${\bf Q}$}-isogeny class.
\newblock {\em J. Number Theory}, 15(2):199--202, 1982.

\bibitem{Khawaja}
M.~Khawaja.
\newblock Torsion primes for elliptic curves over degree $8$ number fields,
  2023.
\newblock \url{https://arxiv.org/abs/2304.14284}.

\bibitem{Fermat23}
M.~Khawaja and F.~Jarvis.
\newblock Fermat's last theorem over $\mathbb{Q}(\sqrt{2},\sqrt{3})$, 2023.
\newblock \url{https://arxiv.org/abs/2210.03744}.

\bibitem{Primitive}
M.~Khawaja and S.~Siksek.
\newblock Primitive algebraic points on curves, 2023.
\newblock \url{https://arxiv.org/abs/2306.17772}.

\bibitem{Klassen97}
M.~Klassen and P.~Tzermias.
\newblock Algebraic points of low degree on the {F}ermat quintic.
\newblock {\em Acta Arith.}, 82(4):393--401, 1997.

\bibitem{Koutsianas20}
A.~Koutsianas.
\newblock On the generalized {F}ermat equation {$a^2+3b^6=c^n$}.
\newblock {\em Bull. Hellenic Math. Soc.}, 64:56--68, 2020.

\bibitem{KrausNotes}
A.~Kraus.
\newblock On the equation $x^r+y^r=z^p$.
\newblock Notes for a talk given at IEM, Universit\"{a}t Duisburg-Essen, 1998.
  A copy has been included in \cite{NicolasGithub}.

\bibitem{Kraus90}
A.~Kraus.
\newblock Sur le d\'{e}faut de semi-stabilit\'{e} des courbes elliptiques \`a
  r\'{e}duction additive.
\newblock {\em Manuscripta Math.}, 69(4):353--385, 1990.

\bibitem{Kraus96}
A.~Kraus.
\newblock Courbes elliptiques semi-stables et corps quadratiques.
\newblock {\em J. Number Theory}, 60(2):245--253, 1996.

\bibitem{Kraus97}
A.~Kraus.
\newblock Majorations effectives pour l'\'{e}quation de {F}ermat
  g\'{e}n\'{e}ralis\'{e}e.
\newblock {\em Canad. J. Math.}, 49(6):1139--1161, 1997.

\bibitem{Kraus33p}
A.~Kraus.
\newblock Sur l'\'{e}quation {$a^3+b^3=c^p$}.
\newblock {\em Experiment. Math.}, 7(1):1--13, 1998.

\bibitem{Kraus07}
A.~Kraus.
\newblock Courbes elliptiques semi-stables sur les corps de nombres.
\newblock {\em Int. J. Number Theory}, 3(4):611--633, 2007.

\bibitem{Kraus18}
A.~Kraus.
\newblock Quartic points on the {F}ermat quintic.
\newblock {\em Ann. Math. Blaise Pascal}, 25(1):199--205, 2018.

\bibitem{Kraus19}
A.~Kraus.
\newblock Le th\'{e}or\`eme de {F}ermat sur certains corps de nombres
  totalement r\'{e}els.
\newblock {\em Algebra Number Theory}, 13(2):301--332, 2019.

\bibitem{FournLemos}
S.~Le~Fourn and P.~Lemos.
\newblock Residual {G}alois representations of elliptic curves with image
  contained in the normaliser of a nonsplit {C}artan.
\newblock {\em Algebra Number Theory}, 15(3):747--771, 2021.

\bibitem{Lemos}
P.~Lemos.
\newblock Serre's uniformity conjecture for elliptic curves with rational
  cyclic isogenies.
\newblock {\em Trans. Amer. Math. Soc.}, 371(1):137--146, 2019.

\bibitem{MPT23}
F.~G. Madriaga, A.~Pacetti, and L.~V. Torcomian.
\newblock On the equation {$x^2+dy^6=z^p$} for square-free {$1\le d\le20$}.
\newblock {\em Int. J. Number Theory}, 19(5):1129--1165, 2023.

\bibitem{Man01}
J.~Manoharmayum.
\newblock On the modularity of certain {${\rm GL}_2(\Bbb F_7)$} {G}alois
  representations.
\newblock {\em Math. Res. Lett.}, 8(5-6):703--712, 2001.

\bibitem{Man04}
J.~Manoharmayum.
\newblock Serre's conjecture for mod 7 {G}alois representations.
\newblock In {\em Modular curves and abelian varieties}, volume 224 of {\em
  Progr. Math.}, pages 141--149. Birkh\"{a}user, Basel, 2004.

\bibitem{Mazur77}
B.~Mazur.
\newblock Modular curves and the {E}isenstein ideal.
\newblock {\em Inst. Hautes \'{E}tudes Sci. Publ. Math.}, (47):33--186, 1977.
\newblock With an appendix by Mazur and M. Rapoport.

\bibitem{Mazur78}
B.~Mazur.
\newblock Rational isogenies of prime degree (with an appendix by {D}.
  {G}oldfeld).
\newblock {\em Invent. Math.}, 44(2):129--162, 1978.

\bibitem{Merel96}
L.~Merel.
\newblock Bornes pour la torsion des courbes elliptiques sur les corps de
  nombres.
\newblock {\em Invent. Math.}, 124(1-3):437--449, 1996.

\bibitem{PhilippeFermat}
P.~Michaud-Jacobs.
\newblock Fermat's last theorem and modular curves over real quadratic fields.
\newblock {\em Acta Arith.}, 203(4):319--351, 2022.

\bibitem{Philippe22}
P.~Michaud-Jacobs.
\newblock On some generalized {F}ermat equations of the form
  {$x^2+y^{2n}=z^p$}.
\newblock {\em Mathematika}, 68(2):344--361, 2022.

\bibitem{PhilippeBielliptic}
P.~Michaud-Jacobs.
\newblock Computing points on bielliptic modular curves over fixed quadratic
  fields.
\newblock {\em Bull. Aust. Math. Soc.}, pages 1--8, 2023.

\bibitem{Philippe}
P.~Michaud-Jacobs and F.~Najman.
\newblock Quadratic points on {$X_0(163)$}, 2023.
\newblock \url{https://arxiv.org/abs/2310.10432}.

\bibitem{PhilippeDec}
P.~Michaud-Rodgers.
\newblock A unique perfect power decagonal number.
\newblock {\em Bull. Aust. Math. Soc.}, 105(2):212--216, 2022.

\bibitem{Mocanu22}
D.~Mocanu.
\newblock Asymptotic {F}ermat for signatures {$(p,p,2)$} and {$(p,p,3)$} over
  totally real fields.
\newblock {\em Mathematika}, 68(4):1233--1257, 2022.

\bibitem{Mocanu23}
D.~Mocanu.
\newblock Asymptotic {F}ermat for signatures {$(r,r,p)$} using the modular
  approach.
\newblock {\em Res. Number Theory}, 9(4):Paper No. 71, 17, 2023.

\bibitem{Mordell}
L.~J. Mordell.
\newblock The {D}iophantine equation {$x\sp{4}+y\sp{4}=1$} in algebraic number
  fields.
\newblock {\em Acta Arith.}, 14:347--355, 1967/68.

\bibitem{Najman}
F.~Najman and B.~Vukorepa.
\newblock Quadratic points on bielliptic modular curves.
\newblock {\em Math. Comp.}, 92(342):1791--1816, 2023.

\bibitem{NowrooziSiksek}
M.~Nowroozi and S.~Siksek.
\newblock Perfect powers in elliptic divisibility sequences, 2023.
\newblock \url{https://arxiv.org/abs/2312.08997}.

\bibitem{Ogg}
A.~P. Ogg.
\newblock Hyperelliptic modular curves.
\newblock {\em Bull. Soc. Math. France}, 102:449--462, 1974.

\bibitem{OzmanSiksek}
E.~Ozman and S.~Siksek.
\newblock Quadratic points on modular curves.
\newblock {\em Math. Comp.}, 88(319):2461--2484, 2019.

\bibitem{OS22}
E.~Ozman and S.~Siksek.
\newblock {$S$}-unit equations and the asymptotic {F}ermat conjecture over
  number fields.
\newblock In {\em Number theory}, De Gruyter Proc. Math., pages 83--103. De
  Gruyter, Berlin, 2022.

\bibitem{Pacetti22}
A.~Pacetti and L.~Villagra~Torcomian.
\newblock {$\Bbb{Q}$}-{C}urves, {H}ecke characters and some {D}iophantine
  equations.
\newblock {\em Math. Comp.}, 91(338):2817--2865, 2022.

\bibitem{Parent}
P.~Parent.
\newblock Torsion des courbes elliptiques sur les corps cubiques.
\newblock {\em Ann. Inst. Fourier (Grenoble)}, 50(3):723--749, 2000.

\bibitem{Parent17}
P.~Parent.
\newblock No 17-torsion on elliptic curves over cubic number fields.
\newblock {\em J. Th\'{e}or. Nombres Bordeaux}, 15(3):831--838, 2003.

\bibitem{Rajaei}
A.~Rajaei.
\newblock On the levels of mod {$l$} {H}ilbert modular forms.
\newblock {\em J. Reine Angew. Math.}, 537:33--65, 2001.

\bibitem{Ribet}
K.~A. Ribet.
\newblock On modular representations of {${\rm Gal}(\overline{\bf Q}/{\bf Q})$}
  arising from modular forms.
\newblock {\em Invent. Math.}, 100(2):431--476, 1990.

\bibitem{Serre87}
J.-P. Serre.
\newblock Sur les repr\'{e}sentations modulaires de degr\'{e} {$2$} de
  {$\mathrm{Gal}(\overline{\bold{Q}}/\bold{Q})$}.
\newblock {\em Duke Math. J.}, 54(1):179--230, 1987.

\bibitem{modcurves}
S.~Siksek.
\newblock Explicit arithmetic of modular curves.
\newblock
  \url{https://homepages.warwick.ac.uk/staff/S.Siksek/teaching/modcurves/lecturenotes.pdf}.
\newblock Lecture notes.

\bibitem{SiksekModular}
S.~Siksek.
\newblock The modular approach to {D}iophantine equations.
\newblock In {\em Explicit methods in number theory}, volume~36 of {\em Panor.
  Synth\`eses}, pages 151--179. Soc. Math. France, Paris, 2012.

\bibitem{SillimanVogt}
J.~Silliman and I.~Vogt.
\newblock Powers in {L}ucas sequences via {G}alois representations.
\newblock {\em Proc. Amer. Math. Soc.}, 143(3):1027--1041, 2015.

\bibitem{SilvermanII}
J.~H. Silverman.
\newblock {\em Advanced topics in the arithmetic of elliptic curves}, volume
  151 of {\em Graduate Texts in Mathematics}.
\newblock Springer-Verlag, New York, 1994.

\bibitem{SilvermanI}
J.~H. Silverman.
\newblock {\em The arithmetic of elliptic curves}, volume 106 of {\em Graduate
  Texts in Mathematics}.
\newblock Springer, Dordrecht, second edition, 2009.

\bibitem{TaylorWiles}
R.~Taylor and A.~Wiles.
\newblock Ring-theoretic properties of certain {H}ecke algebras.
\newblock {\em Ann. of Math. (2)}, 141(3):553--572, 1995.

\bibitem{LMFDB}
{The LMFDB Collaboration}.
\newblock The {L}-functions and modular forms database.
\newblock \url{http://www.lmfdb.org}, 2023.

\bibitem{Thorne}
J.~A. Thorne.
\newblock Automorphy of some residually dihedral {G}alois representations.
\newblock {\em Math. Ann.}, 364(1-2):589--648, 2016.

\bibitem{Thornemodular}
J.~A. Thorne.
\newblock Elliptic curves over {$\Bbb{Q}_{\infty}$} are modular.
\newblock {\em J. Eur. Math. Soc. (JEMS)}, 21(7):1943--1948, 2019.

\bibitem{Thorne23}
J.~A. Thorne.
\newblock Elliptic curves and modularity.
\newblock In {\em European {C}ongress of {M}athematics}, pages 643--662. EMS
  Press, Berlin, 2023.

\bibitem{Top15}
T.~Top and O.~Sall.
\newblock Points alg\'{e}briques de degr\'{e}s au plus 12 sur la quintique de
  {F}ermat.
\newblock {\em Acta Arith.}, 169(4):385--395, 2015.

\bibitem{Tzermias04}
P.~Tzermias.
\newblock Low-degree points on {H}urwitz-{K}lein curves.
\newblock {\em Trans. Amer. Math. Soc.}, 356(3):939--951, 2004.

\bibitem{Villagra23}
L.~Villagra~Torcomian.
\newblock Asymptotic {F}ermat for signature {$(4,2,p)$} over number fields.
\newblock {\em J. Number Theory}, 250:124--138, 2023.

\bibitem{wilesswitch}
A.~Wiles.
\newblock On ordinary {$\lambda$}-adic representations associated to modular
  forms.
\newblock {\em Invent. Math.}, 94(3):529--573, 1988.

\bibitem{Wiles}
A.~Wiles.
\newblock Modular elliptic curves and {F}ermat's last theorem.
\newblock {\em Ann. of Math. (2)}, 141(3):443--551, 1995.

\bibitem{Yoshikawa}
S.~Yoshikawa.
\newblock Modularity of elliptic curves abelian totally real fields unramified
  at 3, 5, and 7.
\newblock {\em J. Th\'{e}or. Nombres Bordeaux}, 30(3):729--741, 2018.

\bibitem{Zhang}
X.~Zhang.
\newblock On the modularity of elliptic curves over the cyclotomic {$\Bbb
  Z_p$}-extension of some real quadratic fields.
\newblock {\em Ramanujan J.}, 62(2):545--550, 2023.

\end{thebibliography}
\end{document}